\newtheorem{theorem}{Theorem}[section]
\newtheorem{lemma}[theorem]{Lemma}
\newtheorem{proposition}[theorem]{Proposition}
\newtheorem{corollary}[theorem]{Corollary}
\theoremstyle{definition}
\newtheorem{definition}[theorem]{Definition}
\newtheorem{example}[theorem]{Example}
\theoremstyle{remark}
\newtheorem{remark}[theorem]{Remark}
\numberwithin{equation}{section}
\begin{document}



\title[Dynamics]{Hypercyclicity of weighted shifts on weighted Bergman spaces}

\author[Das]{Bibhash Kumar Das}
\address{Indian Institute of Technology Bhubaneswar, Jatni Rd, Khordha - 752050, India}
\email{bkd11@iitbbs.ac.in}

\author[Mundayadan]{Aneesh Mundayadan}
\address{Indian Institute of Technology Bhubaneswar, Jatni Rd, Khordha - 752050, India}
\email{aneesh@iitbbs.ac.in}

\subjclass[2010]{Primary 47A16, 46E22, 32K05, 47B32; Secondary
47B37, 37A99.}
\keywords{weighted shift operator; weighted Bergman space; radial weight; hypercyclic; chaos}

\begin{abstract} We study the continuity, and dynamical properties (hypercyclicity, periodic vectors, and chaos) for a weighted backward shift $B_w$ on a weighted Bergman space $A^p_{\phi}$ based on the norm estimates of coefficient functionals on $A^p_{\phi}$. Here, the weight function $\phi(z)$ is mostly radial, but our work will also involve a (non-radial) subharmonic weight. We provide a complete characterization of hypercyclic shifts $B_w$ on $A^p_{\phi}$ when $\phi$ is an (integrable) radial weight. The coefficient multipliers obtained in this paper for certain weights are new.

\end{abstract}
\maketitle

\noindent
\tableofcontents

	\markboth{Bibhash Kumar Das, Aneesh Mundayadan }{Dynamics of weighted shifts}

\section{Introduction}

We begin by recollecting the notions of hypercyclicity, mixing, and chaos from linear dynamics.

 \begin{definition}
 Let $X$ be a separable Banach space. An operator $T:X\rightarrow X$ is said to be
 \begin{itemize}
 \item \textit{hypercyclic} if there exists $x\in X$ whose orbit $\{x,Tx,T^2x,\cdots\}$ is dense in $X$,
\item \textit{topologically transitive} if, for any two non-empty open sets $U_1$ and $U_2$ of $X$, the set $\{k\in \mathbb{N}: T^k(U_1)\cap U_2\neq \varnothing\}$ is non-empty, 
\item \textit{topologically mixing} on $X$ if, for any two non-empty open sets $U_1$ and $U_2$ of $X$, the set $\{k\in \mathbb{N}: T^k(U_1)\cap U_2\neq \varnothing\}$ is co-finite, and
\item \textit{chaotic} if it is hypercyclic with a dense subspace of periodic vectors. (A vector $u$ is periodic for $T$ if its orbit returns to $u$, i.e., $T^ku=u$ for some $k$.)
 \end{itemize}
 \end{definition}

The transitivity and hypercyclicity are equivalent properties in separable Banach spaces. The aforementioned dynamical properties of operators are extensively studied for the class of weighted shift operators. Recall that the weighted backward shift on an analytic function space over the unit disc in the complex plane is  given by
\begin{center}
$
 B_w\big(\sum_{n=0}^{\infty}\lambda_nz^n\big):=\sum_{n=0}^{\infty}w_{n+1}\lambda_{n+1}z^n.
$
\end{center}
These can also be analogously defined on sequence spaces. In fact, in the study of linear dynamical systems, weighted shift operators on sequence spaces have been investigated extensively, whereas weighted shifts on familiar analytic function spaces did not receive considerable attention. Motivated by this, we study the hypercyclicity of weighted shifts on weighted Bergman spaces corresponding to both radial and non-radial weights. For the subject matter of linear dynamics, we refer to the monographs by Bayart and Matheron \cite{Bayart-Matheron} and Grosse-Erdmann and Peris \cite{Erdmann-Peris}. Also, for the theory of classical weighted shifts, see Halmos \cite{Halmos} and Shields \cite{Shields1}. We will make use of the following standard criteria in linear dynamics for establishing the hypercyclicity and chaos of weighted shifts on weighted Bergman spaces. For more and variants of these, we refer to \cite{Bayart-Matheron} and \cite{Erdmann-Peris}.

\begin{theorem} [\textsf{Gethner-Shapiro Criterion}, cf. \cite{Gethner-Shapiro}]\label{thm-hypc}
Let $T$ be a bounded operator on a separable Banach space $X$, and let $X_0$ be a dense subset of $X$. If $\{n_k\} \subseteq \mathbb{N}$ is a strictly
increasing sequence and $S:X_0\mapsto X_0$ is a map such that, for each $x\in X_0,$ $\lim_{k\rightarrow \infty} T^{n_k}x=0=\lim_{k\rightarrow \infty}S^{n_k}x$
and $TSx=x,$
then $T$ is hypercyclic. Moreover, if $n_k=k$ for all $k\geq 1$, then $T$ is mixing on $X$.
\end{theorem}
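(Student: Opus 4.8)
The plan is to establish topological transitivity of $T$, which is equivalent to hypercyclicity on a separable Banach space (as noted above), and then to upgrade this to mixing in the special case $n_k = k$. Fix two nonempty open sets $U_1, U_2 \subseteq X$. Since $X_0$ is dense in $X$, I would choose points $u \in U_1 \cap X_0$ and $v \in U_2 \cap X_0$, together with an $\varepsilon > 0$ small enough that the balls $B(u,\varepsilon) \subseteq U_1$ and $B(v,\varepsilon) \subseteq U_2$.

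The key construction is the ``approximate preimage'' $z_k := u + S^{n_k} v$, which is well defined because $S$ maps $X_0$ into $X_0$, so $S^{n_k} v \in X_0 \subseteq X$. I would then track two limits simultaneously. First, since $\lim_k S^{n_k} v = 0$, we have $z_k \to u$, so $z_k \in B(u,\varepsilon) \subseteq U_1$ for all sufficiently large $k$. Second, applying $T^{n_k}$ and using linearity gives
\[
T^{n_k} z_k = T^{n_k} u + T^{n_k} S^{n_k} v.
\]

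The conceptual heart of the argument is the telescoping identity $T^{n_k} S^{n_k} v = v$. This follows from $TSx = x$ on $X_0$: since $S^{j} v \in X_0$, one has $T(S^{j} v) = TS(S^{j-1} v) = S^{j-1} v$, and iterating $n_k$ times collapses the composition to the identity. Combined with the hypothesis $T^{n_k} u \to 0$, this yields $T^{n_k} z_k \to v$, so $T^{n_k} z_k \in B(v,\varepsilon) \subseteq U_2$ for all large $k$. Hence for every sufficiently large $k$ we simultaneously have $z_k \in U_1$ and $T^{n_k} z_k \in U_2$, which shows $T^{n_k}(U_1) \cap U_2 \neq \varnothing$. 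In particular the set $\{n \in \mathbb{N} : T^{n}(U_1) \cap U_2 \neq \varnothing\}$ is nonempty, establishing transitivity and therefore hypercyclicity.

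For the mixing statement, observe that when $n_k = k$ the argument above produces a threshold $K$ (depending on $U_1, U_2$) such that $T^{k}(U_1) \cap U_2 \neq \varnothing$ for every $k \geq K$; that is, the return set is cofinite, which is precisely topological mixing. The only point requiring care is to ensure that all the relevant iterates remain in $X_0$, so that both limit hypotheses and the relation $TSx = x$ may legitimately be applied; this is guaranteed by the assumption $S : X_0 \to X_0$, and I do not anticipate any genuine obstacle beyond this bookkeeping.
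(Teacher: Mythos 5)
Your proof is correct. The paper itself gives no proof of this criterion---it is quoted from Gethner--Shapiro with a citation---and your argument (taking $z_k = u + S^{n_k}v$ with $u \in U_1 \cap X_0$, $v \in U_2 \cap X_0$, using $T^{n_k}S^{n_k}v = v$ and the two null limits to get $z_k \to u$ and $T^{n_k}z_k \to v$, then invoking the transitivity--hypercyclicity equivalence via Baire category) is precisely the standard proof from the cited literature, including the cofiniteness observation that yields mixing when $n_k = k$.
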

A similar criterion, known as the chaoticity criterion, has been used to obtain chaotic operators in Banach spaces. This criterion is very strong and has several important implications; see \cite{Bayart-Matheron} and \cite{Erdmann-Peris}.

\begin{theorem}[\textsf{Chaoticity Criterion}, cf. \cite{Bonilla-Erdmann1}] \label{chaos}
Let $X$ be a separable Banach space, $X_0$ be a dense set in $X$, and let $T$ be a bounded operator on $X$. If there exists a map $S:X_0\rightarrow X_0$ such that $\sum_{n\geq 0} T^nx$ and $\sum_{n\geq 0} S^nx$
are unconditionally convergent, and $
TSx=x
$ for each $x\in X_0$, then the operator $T$ is chaotic and mixing on $X$. 
\end{theorem}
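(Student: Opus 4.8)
The plan is to deduce both conclusions — that $T$ is mixing and that $T$ admits a dense set of periodic vectors — from the two hypotheses, using the Gethner–Shapiro Criterion (Theorem \ref{thm-hypc}) for the mixing part and a direct two-sided summation for the periodic part.

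First I would extract the pointwise decay that mixing requires. Since $\sum_{n\geq 0}T^nx$ and $\sum_{n\geq 0}S^nx$ converge (unconditionally, hence in particular in the ordinary sense), their general terms tend to $0$; that is, $T^nx\to 0$ and $S^nx\to 0$ as $n\to\infty$ for every $x\in X_0$. Together with the assumption $TSx=x$ and the density of $X_0$, the hypotheses of Theorem \ref{thm-hypc} are satisfied with $n_k=k$, so $T$ is mixing, and in particular hypercyclic, on $X$.

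Next I would produce the periodic vectors. Fix $x\in X_0$ and a positive integer $p$, and set
$$u_p=\sum_{j\geq 0}T^{jp}x+\sum_{j\geq 1}S^{jp}x,$$
each series being a subseries of an unconditionally convergent series, hence convergent in $X$, with every summand lying in $X_0$ because $S$ maps $X_0$ into itself. Applying the bounded operator $T^p$ term by term and using $T^pS^py=y$ for $y\in X_0$ (obtained by iterating $TSx=x$), the first series is shifted to $\sum_{j\geq 1}T^{jp}x$, while the second produces $\sum_{j\geq 0}S^{jp}x=x+\sum_{j\geq 1}S^{jp}x$. Recombining gives $T^pu_p=u_p$, so $u_p$ is periodic of period dividing $p$.

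Finally I would establish density of these periodic vectors by showing $\|u_p-x\|\to 0$ as $p\to\infty$. Writing $u_p-x=\sum_{j\geq 1}T^{jp}x+\sum_{j\geq 1}S^{jp}x$, I would invoke the standard characterization of unconditional convergence: for each $\varepsilon>0$ there is a finite set $F\subseteq\mathbb{N}$ with $\big\|\sum_{n\in A}T^nx\big\|<\varepsilon$ for every finite $A$ disjoint from $F$, and similarly for $S$. Choosing $p>\max F$ forces the arithmetic progression $\{jp:j\geq 1\}$ to avoid $F$, so both tails have norm at most $\varepsilon$; hence $u_p\to x$. Since $X_0$ is dense, the periodic vectors are dense, and combined with hypercyclicity this yields chaos. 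I expect the density step to be the main obstacle: one must pass from ordinary convergence to the genuinely stronger unconditional convergence precisely in order to control the tails $\sum_{j\geq 1}T^{jp}x$ over sparse, arithmetic-progression index sets — a bound that a merely convergent series need not provide.
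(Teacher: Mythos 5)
Your proof is correct. Note that the paper does not prove this criterion at all: it is quoted as a known result with the citation to Bonilla and Grosse-Erdmann, so there is no internal proof to compare against. Your argument is precisely the standard one from that literature (see also Grosse-Erdmann--Peris, \emph{Linear Chaos}): mixing via the Gethner--Shapiro/Kitai criterion with $n_k=k$ (terms of a convergent series tend to zero), periodic vectors via the two-sided sums $u_p=\sum_{j\geq 0}T^{jp}x+\sum_{j\geq 1}S^{jp}x$ together with $T^pS^py=y$ on $X_0$, and density of periodic vectors via the finite-set (Cauchy) characterization of unconditional convergence, which is exactly the point where unconditionality, rather than mere convergence, is indispensable for controlling the tails over the arithmetic progressions $\{jp:j\geq 1\}$.
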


There is yet another powerful criterion:

\begin{theorem} [\textsf{Eigenvalue Criterion}, cf. \cite{Bayart-Matheron} or \cite{Erdmann-Peris}]
    Suppose $X$ is a separable, complex Banach space. If $T$ is a bounded operator on $X$ such that span $\{x\in X:Tx=\lambda x,~ |\lambda|<1\}$ and span $\{x\in X:Tx=\lambda x, ~|\lambda|>1\}$ are dense in $X$, then $T$ is hypercyclic. In addition, if span $\{x\in X: Tx=\lambda x,~\lambda^n=1, n\in \mathbb{N}\}$ is dense, then $T$ is chaotic.
\end{theorem}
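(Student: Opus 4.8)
The plan is to prove the two assertions of this criterion (the Godefroy--Shapiro criterion) separately: hypercyclicity by verifying topological transitivity directly, and chaos by exhibiting a dense set of periodic vectors on top of that. I would deliberately \emph{not} try to funnel this into the single-dense-set form of the Gethner--Shapiro criterion (Theorem~\ref{thm-hypc}) as stated above, because its two hypotheses naturally live on \emph{different} dense sets: $T^n$ decays on the span of the eigenvectors with $|\lambda|<1$ but grows on the span of those with $|\lambda|>1$, so there is no single $X_0$ admitting a right inverse $S:X_0\to X_0$ with both $T^{n}x\to 0$ and $S^{n}x\to 0$. Building the transitivity witnesses by hand sidesteps this and, as a bonus, delivers co-finiteness (mixing) at no extra cost.

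For the hypercyclicity part, write $X_-=\mathrm{span}\{x:Tx=\lambda x,\ |\lambda|<1\}$ and $X_+=\mathrm{span}\{x:Tx=\lambda x,\ |\lambda|>1\}$, both dense by hypothesis. Given nonempty open sets $U,V\subseteq X$, I would use density to choose $u\in U\cap X_-$ and $v\in V\cap X_+$, and write them as \emph{finite} sums $u=\sum_i a_i e_i$ and $v=\sum_j b_j f_j$ with $Te_i=\lambda_i e_i$, $|\lambda_i|<1$, and $Tf_j=\mu_j f_j$, $|\mu_j|>1$. Then set
\[
 w_n := u + \sum_j b_j\,\mu_j^{-n} f_j .
\]
Since $|\mu_j|>1$ for each of the finitely many $j$, one has $\mu_j^{-n}\to 0$, hence $w_n\to u$; and
\[
 T^n w_n = \sum_i a_i \lambda_i^{n} e_i + \sum_j b_j\,\mu_j^{-n}\mu_j^{n} f_j = \sum_i a_i \lambda_i^{n} e_i + v \longrightarrow v,
\]
because $|\lambda_i|<1$ for the finitely many $i$. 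Thus $w_n\in U$ and $T^n w_n\in V$ for all large $n$, so $\{n:T^n(U)\cap V\neq\varnothing\}$ is co-finite; $T$ is topologically mixing, and in particular (transitivity $\Leftrightarrow$ hypercyclicity in separable Banach spaces, as noted above) hypercyclic.

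For chaos, assume in addition that $P:=\mathrm{span}\{x:Tx=\lambda x,\ \lambda \text{ a root of unity}\}$ is dense. Each spanning eigenvector $x$ with $Tx=\lambda x$ and $\lambda^{m}=1$ is periodic, and for a finite combination $x=\sum_k c_k x_k$ with $Tx_k=\lambda_k x_k$, $\lambda_k^{m_k}=1$, taking $N=\mathrm{lcm}_k(m_k)$ yields $T^N x=\sum_k c_k \lambda_k^{N} x_k = x$. Hence every element of $P$ is periodic, so $T$ has dense periodic vectors; combined with the hypercyclicity already established, $T$ is chaotic. The only point demanding care --- the ``main obstacle,'' though a mild one --- is the bookkeeping in the transitivity step: one must keep $u$ and $v$ as finite linear combinations, so that the scalar limits $\lambda_i^{n}\to 0$ and $\mu_j^{-n}\to 0$ genuinely control the vector convergence uniformly across the finitely many summands. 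Everything else reduces to geometric decay of $|\lambda|^{n}$ and $|\mu|^{-n}$.
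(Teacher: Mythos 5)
Your proof is correct. Note that the paper itself gives no proof of this statement: it is quoted as a known result (the Godefroy--Shapiro eigenvalue criterion) with pointers to \cite{Bayart-Matheron}, \cite{Erdmann-Peris}, and \cite{Godefroy-Shapiro}, so the only meaningful comparison is with the standard proof in those references --- and your argument is essentially that proof. The direct construction of transitivity witnesses $w_n = u + \sum_j b_j\,\mu_j^{-n} f_j$, with $u$ and $v$ finite combinations of eigenvectors from the two dense spans, is exactly how the criterion is established in the literature (equivalently, it can be packaged as a two-dense-set Kitai-type criterion with $S f_j := \mu_j^{-1} f_j$ on the span of the large eigenvectors), and your remark that the single-dense-set Gethner--Shapiro criterion of Theorem~\ref{thm-hypc} does not directly fit is well taken, though your parenthetical assertion that \emph{no} single $X_0$ could work is an overstatement --- the honest claim is only that the natural candidates fail. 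The argument has the added merit of delivering mixing, and the lcm argument for density of periodic vectors (together with the already-established hypercyclicity) correctly yields chaos in the sense defined in the paper.
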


Dynamics of shifts in the context of Banach spaces was initiated by Rolewicz \cite{Rolewicz} who showed that $\lambda B$ is hypercyclic on the sequence space $\ell^p$, where $|\lambda|>1$, $B$ is the unweighted backward shift, and $1\leq p<\infty$. Kitai \cite{Kitai} provided a general criterion for the hypercyclicity of continuous operators on Banach spaces, and gave a class of hypercyclic shifts. Gethner and Shapiro \cite{Gethner-Shapiro} obtained the same criterion independently. Salas \cite{Salas-hc} provided a complete characterization of the hypercyclicity of classical unilateral and bilateral shifts. Hypercyclicity of weighted shifts on $F$-sequence spaces were characterized by Grosse-Erdmann \cite{Erdmann}. Costakis and Sambarino \cite{Costakis} characterized mixing unilateral and bilateral weighted shifts. We also refer to Beise and M\"{u}ller \cite{Beise-Muller1}, Beise, Meyrath, and M\"{u}ller \cite{Beise-Meyrath-Muller}, Bourdon and Shapiro \cite{Bourdon-Shapiro}, and M\"{u}ller and Maike \cite{Muller-Maike} for the dynamics related to the backward shift on the unweighted Bergman spaces of some simply connected domains. We mention that Gethner and Shapiro (\cite{Gethner-Shapiro}, p. 287) established the hypercyclicity of the unweighted backward shift on the Bergman space $A^2(\mu)$ for certain radial measures $\mu$ supported on $[0,1)$.

The paper is organized as follows. In Section $2,$ we obtain coefficient estimates in $A^p_{\phi}$ for certain weights $\phi(z)$, and obtain upper bounds for the norms of coefficient functionals $k_n$. (These estimates will be used in studying the continuity and hypercyclicity of shifts.) Continuity of weighted shifts is studied in Section $3$, and specifically, we prove that $B$, the unweighted shift, is continuous on $A^p_{\phi}$ for any integrable weight $\phi$. Further, the hypercyclicity, mixing, and chaos for $B_w$ will be established in Section $4$. Examples of non-radial Bergman spaces is considered in Section $4$.

\section{Norm estimates for coefficient functionals on $A^p_{\phi}$}

In this section, we first introduce the weighted Bergman space $A^p_{\phi}$, and as our main results, we obtain norm estimates for point evaluations and the $n$-th coefficient functional $k_n$, given by
\[
k_n(f):=\frac{f^{(n)}(0)}{n!}, \hskip .7cm n\geq 0,
\]
on $A^p_{\phi}$ for certain weights. By a \textit{weight function} we mean a strictly positive and integrable function $\phi(z)$ defined on the unit disc. For a given weight $\phi(z)$, and $1\leq p<\infty,$ the weighted Bergman space $A_{\phi}^{p}$ consists of analytic functions $f(z)$ on the unit disc such that
\[
\lVert f\rVert_{A^{p}_{\phi}}^{p}:=\int_{\mathbb{D}}\lvert f(z)\rvert^{p}\phi( z)dA(z)<\infty,
\]
 where $dA(z):=\frac{1}{\pi}dxdy$, the normalized area measure on $\mathbb{D}$.  The weighted Bergman spaces that have been extensively investigated for decades correspond to the \textit{standard} weights $\phi(z):=(1+\alpha)(1-|z|^2)^{\alpha}$, where $\alpha>-1$, cf. Hedenmalm et al. \cite{Zhu}, and Zhu \cite{Zhu1}. The standard weights are radial, i.e. $\phi(z)=\phi(|z|)$ for all $z\in \mathbb{D}$. Let us recall the following bigger class of radial weights.

\begin{definition} A continuous radial weight $\phi(r)$ defined on $[0,1)$ is called \textit{normal} if there are constants $k>\epsilon>-1$ and $0<r_{0}<1,$ with the following properties: when $r_{0}\leq r,$ $r\to 1^{-}$, then
    \begin{equation}\label{condition9}
         \frac{\phi(r)}{(1-r)^{\epsilon}}\searrow 0~~~~~~~~~~~~\text{and}~~~~~~  \frac{\phi(r)}{(1-r)^{k}}\nearrow \infty.
    \end{equation}
    \end{definition}
The original definition of normal weights, due to Shields and Williams \cite{Shields2}, p. 291, demands an extra condition that $\lim_{r\rightarrow 1}\phi(r)=0$, but this is not required for our purpose, and so, we did not assume this condition in the above definition. (In that paper, the authors studied mainly about Bergman projections as well as the duality in weighted Bergman spaces.) There are several classes of normal weights appearing naturally, but we consider the following only:
\begin{itemize}
    \item[(i)] ~$(1+\alpha)(1-r^2)^{\alpha}$,\hskip .6cm $\alpha>-1$, 
    and
    \item[(ii)]~$(1-r)^{a}\Big(\log \frac{e}{1-r}\Big)^{b}$,\hskip .6cm $a> b> 0$.
\end{itemize}
It is not difficult to see that the above weights are normal. Weights appearing in (ii) are studied by Aleman and Siskakis \cite{Aleman2}. The Bergman spaces corresponding to the weights in (i) and (ii) are denoted by $A^p_{\alpha}$ and $A^p_{a,b}$, respectively. Also, $A^p$ will denote the unweighted Bergman space. 

Kriete and McCluare studied composition operators on weighted Bergman spaces for certain radial weights including special normal weights, cf. \cite{Kriete}. Aleman and Siskakis \cite{Aleman2} investigated integration operators (boundedness, Schatten class memberships, etc.) on $A^2_{\phi}$ for a large class of normal weights including those in (i) and (ii). In \cite{Aleman}, Aleman and Vukoti{\'c} studied the Blaschke products whose derivatives belong to Bergman spaces with normal weights. For an extensive work on weighted and unweighted Bergman spaces, we refer to Hedenmalm et al. \cite{Zhu}, J.A. Pel\'{a}ez and J. R\"{a}tty\"{a} \cite{Pelaez}, and Zhu \cite{Zhu1}. We will make use of the following point wise estimates in $A^p_{\phi}$. See, also, Lemma 2.2 of Esmaeili and Kellay \cite{Esmaeili-Kellay} for weights that are closely related to normal weights and $p=2$. 

\begin{proposition} [Aleman and Vukoti\'{c} \cite{Aleman}, p. 496] \label{Berg}
For a normal weight $\phi(z)$ and $1\leq p<\infty$, there exists a constant $C_p$ (depending only on $p$) such that
\[
     \lvert f(z)\rvert\leq \frac{C_{p}}{(1-\lvert z\rvert^{2})^{\frac{2}{p}}\phi(\lvert z\rvert)^{\frac{1}{p}}}\lVert f \rVert_{A^{p}_{\phi}},
\]
for all $f\in A^p_{\phi}$ and $z \in \mathbb{D}$.

\end{proposition}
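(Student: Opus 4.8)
The plan is to combine the sub-mean value property of the subharmonic function $|f|^p$ with the two-sided control that the normal-weight conditions \eqref{condition9} give on $\phi$ over small Euclidean discs. Since $\log|f|$ is subharmonic, $|f|^p=\exp(p\log|f|)$ is subharmonic for every $p\ge 1$, so for any disc $D(z,\rho):=\{w:|w-z|<\rho\}\subseteq\mathbb{D}$ the sub-mean value inequality yields
\[
|f(z)|^p\le \frac{1}{\rho^2}\int_{D(z,\rho)}|f(w)|^p\,dA(w),
\]
where the normalization $dA=\tfrac{1}{\pi}dx\,dy$ absorbs the usual factor of $\pi$. For a point $z$ near the boundary I would take $\rho=\tfrac12(1-|z|)$, so that $D(z,\rho)\subseteq\mathbb{D}$ and every $w\in D(z,\rho)$ satisfies $\tfrac12(1-|z|)\le 1-|w|\le \tfrac32(1-|z|)$.

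The heart of the argument, and the step I expect to be the main obstacle, is to show that $\phi(|w|)\ge c\,\phi(|z|)$ for all $w\in D(z,\rho)$, with $c>0$ depending only on the constants $k,\epsilon$ of the normal weight; this is exactly where \eqref{condition9} enters. For $|z|$ sufficiently close to $1$ (so that $|w|\ge r_0$ throughout the disc), I would split into two cases. If $|w|\ge|z|$, the monotonicity of $\phi(r)/(1-r)^k$ gives $\phi(|w|)\ge \phi(|z|)\big((1-|w|)/(1-|z|)\big)^k$; if $|w|<|z|$, the monotonicity of $\phi(r)/(1-r)^{\epsilon}$ gives $\phi(|w|)\ge \phi(|z|)\big((1-|w|)/(1-|z|)\big)^{\epsilon}$. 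Since the ratio $(1-|w|)/(1-|z|)$ lies in $[\tfrac12,\tfrac32]$, in both cases the power is bounded below by a positive constant depending only on $k$ and $\epsilon$, which furnishes the desired $c$.

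Granting this, I would factor $\phi(|w|)$ into the integrand and estimate
\[
|f(z)|^p\le \frac{1}{\rho^2}\int_{D(z,\rho)}\frac{|f(w)|^p\phi(|w|)}{\phi(|w|)}\,dA(w)\le \frac{1}{c\,\rho^2\,\phi(|z|)}\int_{D(z,\rho)}|f(w)|^p\phi(|w|)\,dA(w)\le \frac{4\,\|f\|_{A^p_\phi}^p}{c\,(1-|z|)^2\,\phi(|z|)},
\]
using $\rho^2=\tfrac14(1-|z|)^2$ and enlarging the domain of integration to all of $\mathbb{D}$. Taking $p$-th roots and using $1-|z|\le 1-|z|^2\le 2(1-|z|)$ to replace $(1-|z|)$ by $(1-|z|^2)$ at the cost of a constant yields the claimed bound for all $z$ with $|z|\ge r_1$, for a suitable $r_1<1$.

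Finally, for the compact region $|z|\le r_1$, I would observe that $(1-|z|^2)^{2/p}\phi(|z|)^{1/p}$ is bounded above there (as $\phi$ is continuous), while point evaluation on $\{|z|\le r_1\}$ is dominated by a multiple of $\|f\|_{A^p_\phi}$ (again via the sub-mean value estimate with a fixed radius, together with the positivity of the continuous $\phi$ on a slightly larger compact disc). Hence the inequality persists after enlarging the constant, and taking the larger of the two constants produces a single $C_p$, depending only on $p$, valid throughout $\mathbb{D}$.
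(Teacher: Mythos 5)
Your proof is correct, but there is nothing in the paper to compare it against: Proposition~\ref{Berg} is stated as a quoted result of Aleman and Vukoti\'c (\cite{Aleman}, p.~496) and is given no proof in this paper. What you have written is, in effect, the radial analogue of the argument the paper does give for its non-radial estimates (Propositions~\ref{non-radial} and~\ref{Noon-radial2}): subharmonicity of $|f|^p$, the sub-mean value inequality over the disc of radius $\tfrac12(1-|z|)$ centred at $z$, and a pointwise comparison $\phi(|w|)\ge c\,\phi(|z|)$ on that disc, so that the weight can be inserted into the integrand and the integral enlarged to $\|f\|_{A^p_\phi}^p$. The one ingredient specific to normal weights --- and the heart of your proof --- is that comparison, and you derive it correctly from the two monotonicity conditions in \eqref{condition9}: when $|w|\ge|z|$ use that $\phi(r)/(1-r)^{k}$ is increasing, when $|w|<|z|$ use that $\phi(r)/(1-r)^{\epsilon}$ is decreasing, and in either case $(1-|w|)/(1-|z|)\in[\tfrac12,\tfrac32]$ keeps the correction factor bounded below by a constant depending only on $k$ and $\epsilon$; your compactness argument for the region $|z|\le r_1$ is also fine, since $\phi$ is continuous and strictly positive there. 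One caveat: the constant your argument produces depends not only on $p$ but also on the weight (through $k$, $\epsilon$, $r_0$, and $\min\phi$ on a compact disc). This is unavoidable --- already for the standard weights $(1+\alpha)(1-|z|^2)^{\alpha}$ the best constant must grow with $\alpha$, as one sees by testing $f\equiv 1$ at $z=0$ --- so the parenthetical ``depending only on $p$'' in the statement should be read as ``independent of $f$ and $z$'' for a fixed weight, which is exactly what your proof establishes.
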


We now obtain norm estimates for coefficient functionals $k_n$ defined on $A^p_{\phi}$. This estimate is simple, at the same time it is of much use in our work. Indeed, we will utilize this estimate to obtain necessary conditions for $B_w$ to become hypercyclic, or to have periodic vectors, cf. Section $4$. The continuity of certain weighted shifts will be based on this estimate, cf. Sect. $3$.

\begin{proposition}\label{Bergman1}
    If $\phi(z)$ is a radial weight (not necessarily normal), then there exists a constant $C$ such that
    \[
\frac{\left|f^{(n)}(0)\right|}{n!}\leq C\Big(\int_0^{1}r^{np+1}\phi(r)dr\Big)^{-1/p}\|f\|_{A^p_{\phi}},~ n\geq 0,
    \]
    for all $f\in A^p_{\phi}$. In particular, we have $\|k_n\|\leq C \frac {1} {\|z^n\|_{A^p_{\phi}}},$ for all $n \geq 0$.
\end{proposition}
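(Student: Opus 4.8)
The plan is to recover the $n$-th Taylor coefficient $a_n := f^{(n)}(0)/n!$ of $f$ from its values on the circle of radius $r$ via the Cauchy integral formula, and then to integrate the resulting pointwise bound against the radial weight. Concretely, for each fixed $0<r<1$ I would write
\[
a_n\, r^n=\frac{1}{2\pi}\int_0^{2\pi}f(re^{i\theta})\,e^{-in\theta}\,d\theta,
\]
expanding $f(re^{i\theta})=\sum_k a_k r^k e^{ik\theta}$ and using orthogonality of the exponentials; this immediately gives $|a_n|\,r^n\le \frac{1}{2\pi}\int_0^{2\pi}|f(re^{i\theta})|\,d\theta$.

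Next I would raise both sides to the $p$-th power and push the exponent inside the circular average. Since $t\mapsto t^p$ is convex for $p\ge 1$ and $d\theta/(2\pi)$ is a probability measure, Jensen's inequality (equivalently, H\"older against the constant function) yields
\[
(|a_n|\,r^n)^p\le \frac{1}{2\pi}\int_0^{2\pi}|f(re^{i\theta})|^p\,d\theta .
\]
I would then multiply through by $r\,\phi(r)$ and integrate in $r$ over $(0,1)$. On the left this produces $|a_n|^p\int_0^{1}r^{np+1}\phi(r)\,dr$, while on the right, after an application of Tonelli's theorem to exchange the order of integration (legitimate because the integrand is nonnegative), the double integral is precisely $\tfrac{1}{2}$ times the polar-coordinate form of $\lVert f\rVert_{A^p_{\phi}}^p$, owing to $dA=\frac{1}{\pi}r\,dr\,d\theta$. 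This gives $|a_n|^p\int_0^1 r^{np+1}\phi(r)\,dr\le \tfrac{1}{2}\lVert f\rVert_{A^p_{\phi}}^p$, which is the asserted inequality with constant $C=2^{-1/p}\le 1$.

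Before concluding, I would check that the radial moment appearing in the bound is finite and strictly positive, so the estimate is meaningful: positivity is immediate from $\phi>0$, and finiteness follows from $r^{np+1}\le r$ on $(0,1)$ together with the integrability of $\phi$, which forces $\int_0^1 r\,\phi(r)\,dr<\infty$. For the final ``in particular'' clause I would compute, again in polar coordinates, that $\lVert z^n\rVert_{A^p_{\phi}}^p=2\int_0^1 r^{np+1}\phi(r)\,dr$, substitute this into the main inequality, and take the supremum over the unit ball of $A^p_{\phi}$ to identify $\lVert k_n\rVert$, absorbing the resulting factor of $2$ into the constant.

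I do not anticipate a serious obstacle: the argument is a clean combination of Cauchy's formula and Jensen's inequality, and the weight enters only through the one-dimensional moment $\int_0^1 r^{np+1}\phi(r)\,dr$. The only points that genuinely require care are justifying the Tonelli interchange (supplied by nonnegativity of the integrand) and confirming finiteness of this weighted moment from the integrability hypothesis on $\phi$; everything else is routine bookkeeping.
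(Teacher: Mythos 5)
Your proposal is correct and follows essentially the same route as the paper's proof: Cauchy's integral formula for the coefficient, the $L^p$ circular mean bound (Jensen/H\"older), and integration against $r\,\phi(r)$ to produce the moment $\int_0^1 r^{np+1}\phi(r)\,dr$, which is identified with $\tfrac12\lVert z^n\rVert_{A^p_{\phi}}^p$. You merely make explicit the steps the paper compresses into ``a simple adjustment using $\phi(r)$'' (Jensen, Tonelli, finiteness of the moment), so there is nothing to add.
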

\begin{proof}
For $f\in A^p_{\phi}$, and $r\in(0,1)$, using Cauchy's integral formula, we find that
\begin{equation}\label{rad-coeff}
r^n\frac{|f^{(n)}(0)|}{n!}\leq \big(\int_{-\pi}^{\pi}|f(re^{i\theta})|^pd\theta\big)^{1/p}.
\end{equation}
Since
\begin{eqnarray*}
          \left\lVert  z^{n} \right\rVert_{A^{p}_{\phi}}^{p}   &=& \int_{\theta=0}^{2\pi}\int_{r=0}^{1} r^{pn}\phi(r)r\frac{drd\theta}{\pi}
          =2\int_{0}^{1} r^{np+1} \phi(r)dr,
        \end{eqnarray*}
a simple adjustment in \eqref{rad-coeff} using $\phi(r)$ gives that
\[
\|z^n\|_{A^p_{\phi}}\frac{|f^{(n)}(0)|}{n!}\leq C\|f\|_{A^p_{\phi}}.
\]
This is the desired estimate in the proposition.
\end{proof}


Though our study on the dynamics of $B_w$ is mostly concentrated on the (radial) normal weighted Bergman spaces, we will also consider the following non-radial examples. The second one is subharmonic in $\mathbb{D}$, and $g(z)$ will be assumed to be a univalent function, having strictly positive real part.
\begin{itemize}
    \item[(i)] ~$1-|\Re(z)|$,\hskip .6cm $\alpha>-1$, 
    and
    \item[(ii)]~$|g(z)|^{\gamma}$,\hskip .6cm $0<\gamma<1$.
\end{itemize}
\noindent The reason for taking the range $0<\gamma<1$ is that the polynomials are dense in the corresponding weighted Bergman space, which we require in deducing the hypercyclicity of shifts via Gethner-Shapiro criterion. Weights close to (i) have appeared in the study of shift-invariant subspaces, due to Richter (cf. \cite{Richter}, p. 329). We will now derive coefficient estimates in $A^p_{\phi}$, in the spirit of Proposition $2.3$, and for this purpose, we recall the sub-mean value property of subharmonic functions: Let $\Omega$ be an open and connected set in $\mathbb{C}$. A function $u:\Omega\rightarrow \mathbb{R}$ is said to be \textit{subharmonic} if it is upper semi-continuous, and has the sub-mean value property
\begin{equation}\label{condition19}
u(z_0)\leq \frac{1}{2\pi}\int_{0}^{2\pi}u(z_0+Re^{i\theta})d\theta,
\end{equation}
for every closed disc $|z-z_0|\leq R$, contained in $\Omega$. The above integral can be converted to an area integral (cf. \cite{Zhu}, p. 3), and then (\ref{condition19}) becomes
\[
    u(z_0)\leq \frac{1}{R^{2}}\int_{|z-z_0|\leq R}u(z)dA(z).
\]
Along with the above expression, we will make use of the well known fact: if $f(z)$ is analytic on some domain, and $p>0$, then $|f(z)|^p$ is subharmonic.

\begin{proposition}\label{non-radial}
Let $\phi(z)=1-|\Re(z)|$. The following hold for $f\in A^p_{\phi}$, where $1\leq p<\infty$.
\begin{enumerate}
    \item[(i)] There exists a constant $C_p>0$ such that
    \begin{equation}\label{condition12}
         |f(z)|\leq \frac{C_{p}}{(1-|z|)^{\frac{2}{p}}(1-|\Re(z)|)^{\frac{1}{p}}}\lVert f\rVert_{A^{p}_{\phi}}, ~z\in \mathbb{D}.
    \end{equation}
    \item[(ii)] For $n\geq 0$, there exists a constant $C_{p}^{'}>0$ such that \begin{equation}\label{conditon14}
      \frac{\left|f^{(n)}(0)\right|}{n!}\leq C_{p}^{'}(n+1)^{\frac{3}{p}}\lVert f \rVert_{A^{p}_{\phi}}.
\end{equation}
\end{enumerate}
\end{proposition}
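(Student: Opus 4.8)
The plan is to derive both bounds from the subharmonicity of $|f|^p$, treating (i) as the non-radial analogue of the pointwise estimate in Proposition \ref{Berg} and (ii) as a Cauchy-integral consequence of (i), in the spirit of Proposition \ref{Bergman1}.

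For (i), I would fix $z_0\in\mathbb{D}$ and apply the area form of the sub-mean value property (displayed just before the statement) to the subharmonic function $|f|^p$ on the disc $|z-z_0|\leq R$ with $R=\tfrac12(1-|z_0|)$. This radius keeps the closed disc inside $\mathbb{D}$, since every point of it has modulus at most $|z_0|+R=\tfrac12(1+|z_0|)<1$. The crucial point is that $\phi$ stays comparable to $\phi(z_0)$ throughout: for $|z-z_0|<R$ one has $\big||\Re(z)|-|\Re(z_0)|\big|\leq|z-z_0|<R$, and since $|\Re(z_0)|\leq|z_0|$ gives $1-|\Re(z_0)|\geq 1-|z_0|=2R$, it follows that
\[
1-|\Re(z)|\geq \big(1-|\Re(z_0)|\big)-R\geq \tfrac12\big(1-|\Re(z_0)|\big)=\tfrac12\,\phi(z_0).
\]
Feeding this lower bound into the sub-mean value inequality yields
\[
|f(z_0)|^p\leq \frac{1}{R^2}\int_{|z-z_0|\leq R}|f|^p\,dA\leq \frac{2}{R^2\phi(z_0)}\int_{|z-z_0|\leq R}|f|^p\phi\,dA\leq \frac{8}{(1-|z_0|)^2\phi(z_0)}\,\|f\|_{A^p_\phi}^p,
\]
and taking $p$-th roots gives (\ref{condition12}) with $C_p=8^{1/p}$.

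For (ii), I would combine (i) with the Cauchy integral formula. For $0<r<1$, writing $\tfrac{f^{(n)}(0)}{n!}=\tfrac{1}{2\pi i}\int_{|z|=r}z^{-n-1}f(z)\,dz$ gives $\tfrac{|f^{(n)}(0)|}{n!}\leq \tfrac{1}{2\pi r^n}\int_0^{2\pi}|f(re^{i\theta})|\,d\theta$. Substituting the bound (\ref{condition12}) at $z=re^{i\theta}$, where $1-|\Re(z)|=1-r|\cos\theta|$, and then using the crude inequality $1-r|\cos\theta|\geq 1-r$ under the angular integral, I obtain
\[
\frac{|f^{(n)}(0)|}{n!}\leq \frac{C_p}{r^n(1-r)^{3/p}}\,\|f\|_{A^p_\phi}.
\]
The final step is the standard optimization in $r$: choosing $r=\tfrac{n}{n+1}$ makes $(1-r)^{-3/p}=(n+1)^{3/p}$, while $r^n=\big(1-\tfrac{1}{n+1}\big)^n\geq e^{-1}$ is bounded below uniformly in $n$; this delivers (\ref{conditon14}) with $C_p'=e\,C_p$.

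I expect the only delicate point to be the radius bookkeeping in (i): $R$ must be chosen small enough both to keep the disc inside $\mathbb{D}$ and to prevent $\phi$ from dropping below a fixed fraction of $\phi(z_0)$, and it is precisely the elementary inequality $1-|\Re(z_0)|\geq 1-|z_0|$ that allows the single choice $R=\tfrac12(1-|z_0|)$ to serve both roles. Part (ii) is then routine; the loss of the non-radial structure of $\phi$ in the crude estimate $1-r|\cos\theta|\geq 1-r$ is harmless, since a sharper treatment of the angular integral would only lower the exponent $3/p$ already permitted by the statement.
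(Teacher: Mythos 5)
Your proposal is correct and follows essentially the same route as the paper: subharmonicity of $|f|^p$ on the disc of radius $\tfrac12(1-|z_0|)$, the comparison $1-|\Re(\zeta)|\geq\tfrac12(1-|\Re(z_0)|)$ on that disc (yielding the same constant $8$), and then Cauchy estimates with the crude bound $1-r|\cos\theta|\geq 1-r$. The only cosmetic difference is the final optimization: the paper maximizes $g(R)=R^{np}(1-R)^3$ by calculus, whereas you simply take $r=\tfrac{n}{n+1}$, which is a cleaner way to reach the same $(n+1)^{3/p}$ bound.
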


\begin{proof}
 For any $ z\in \mathbb{D},$ let $R=\frac{1-\lvert z\rvert}{2},$ and $f\in A^{p}_{\phi},$ then it follows from the subharmonicity of $\lvert f(z)\rvert^{p}$ in $\mathbb{D}$ that 
 \begin{eqnarray*} 
     \lvert f(z)\rvert^{p}&\leq& \frac{4}{(1-\lvert z \rvert)^{2}}\int_{B(z,R)}\lvert f(\zeta)\rvert^{p}dA(\zeta)\\
     &=& \frac{4}{(1-\lvert z \rvert)^{2}}\int_{B(z,R)}\lvert f(\zeta)\rvert^{p}\frac{1-|\Re(\zeta)|}{1-|\Re(\zeta)|}dA(\zeta).
 \end{eqnarray*}
Now, if $|z-\zeta|<R=\frac{1-|z|}{2},$ then we get
 \[
 |\Re(\zeta)|-|\Re(z)|\leq |\Re(z)-\Re(\zeta)|=|\Re(z-\zeta)|\leq |z-\zeta|<\frac{1-|z|}{2}\leq \frac{1-|\Re(z)|}{2},
 \]
 which implies that $\frac{1}{1-|\Re(\zeta)|}\leq \frac{2}{1-|\Re(z)|}.$
 Thus,
 \begin{eqnarray*}
      \lvert f(z)\rvert^{p} &\leq& \frac{8}{(1-\lvert z \rvert)^{2}(1-|\Re(z)|)}\int_{B(z,R)}\lvert f(\zeta)\rvert^{p}(1-|\Re(\zeta)|)dA(\zeta)\\
      &\leq&\frac{8}{(1-\lvert z \rvert)^{2}(1-|\Re(z)|)}\int_{\mathbb{D}}\lvert f(\zeta)\rvert^{p}\varphi(\zeta)dA(\zeta),
 \end{eqnarray*}
and hence, we get the first part (i).
    
Applying the Cauchy integral estimates and utilizing the previously established point-wise bound, we obtain 
 \begin{equation}\label{condition13}
     \frac{\left|f^{(n)}(0)\right|}{n!}\leq \frac{1}{R^{n}} \max_{|\xi|\leq R}|f(\xi)|\leq \frac{1}{R^{n}}\frac{C_{p}}{(1- R)^{\frac{3}{p}}}\lVert f \rVert_{A^{p}_{\phi}}.
 \end{equation}
Since for $-\pi \leq \theta < \pi$, the function $1 - R|\cos\theta|$ attains its minimum value $1-R$. 
 Now, consider the function $g(R)= R^{np}(1-R)^{3},$ defined on the interval $[0,1]$. Using differentiation, we get the maximum of $g(R)$ at $\frac{np}{np+3},$ with maximum being given by $(\frac{np}{np+3})^{np}\frac{27}{(np+3)^{3}}.$ Since
           \[
\frac{(np+3)^{np}}{(np)^{np}}=\left(1+\frac{3}{np}\right)^{np}\to e^{3},~~~~(n\to\infty),
\]
by combining \eqref{condition13} and the above maximum value, we obtain the estimate in (ii), as desired.
\end{proof}

Our next results give similar estimates in $A^p_{\phi}$, where $\phi(z)=|g(z)|^{\gamma}$ for a univalent function $g(z)$. 

  \begin{proposition}\label{Noon-radial2}
Let $g(z)$ be a univalent analytic function on $\mathbb{D}$, and let $0<\gamma<1$. Then the following statements hold for the weight $\phi(z):=|g(z)|^{\gamma}$, and $f\in A^p_{\phi}$, where $1 \leq p < \infty$.
\begin{enumerate}
    \item[(i)] There exists a constant $C_p > 0$ such that 
    \begin{equation}\label{non-radial3}
    |f(z)|
    \le 
    \frac{C_p}{(1-|z|)^{\frac{2}{p}}|g(z)|{^{\gamma}/p}}
    \|f\|_{A^{p}_{\phi}},
        \qquad z \in \mathbb{D}.
    \end{equation}
    
    \item[(ii)] For each $n \geq 0$, there exists a constant $C_{p}^{\prime} > 0$ such that 
    \begin{equation}\label{non-radial4}
         \frac{|f^{(n)}(0)|}{n!} \leq \frac{C_p \|f\|_{A^p_{\phi}}}{r^n(1 - r)^{\frac{2}{p}}} \int_0^{2\pi} \frac{1}{ |g(r e^{i\theta})|^{\frac{\gamma}{p}}} d\theta,\quad 0<r<1.
    \end{equation}
\end{enumerate}
\end{proposition}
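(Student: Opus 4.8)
The plan is to prove the pointwise growth bound (i) first and then read off the coefficient bound (ii) from it by a Cauchy--integral argument, exactly the order used in Proposition \ref{non-radial}. Granting (i), part (ii) is essentially immediate: writing the $n$-th Taylor coefficient as a contour integral over the circle $|z|=r$ gives
\[
\frac{|f^{(n)}(0)|}{n!}\le\frac{1}{2\pi r^{n}}\int_{0}^{2\pi}|f(re^{i\theta})|\,d\theta,
\]
and substituting the bound from (i) at the points $z=re^{i\theta}$ (for which $1-|z|=1-r$) pulls the factor $(1-r)^{-2/p}$ outside the integral and produces precisely \eqref{non-radial4}, the constant $2\pi$ being absorbed into $C_p$. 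Thus everything reduces to establishing \eqref{non-radial3}.

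For (i) I would mimic the proof of Proposition \ref{non-radial}(i). Fix $z\in\mathbb{D}$, put $R=\tfrac{1-|z|}{2}$, and apply the sub-mean value property of the subharmonic function $|f|^{p}$ on $B(z,R)\subset\mathbb{D}$:
\[
|f(z)|^{p}\le\frac{4}{(1-|z|)^{2}}\int_{B(z,R)}|f(\zeta)|^{p}\,dA(\zeta)
=\frac{4}{(1-|z|)^{2}}\int_{B(z,R)}|f(\zeta)|^{p}|g(\zeta)|^{\gamma}\,\frac{dA(\zeta)}{|g(\zeta)|^{\gamma}}.
\]
If one can replace $|g(\zeta)|^{-\gamma}$ on $B(z,R)$ by a fixed multiple of $|g(z)|^{-\gamma}$, then the remaining integral is at most $\|f\|_{A^{p}_{\phi}}^{p}$, and taking $p$-th roots (together with $1-|z|^{2}\ge 1-|z|$) yields \eqref{non-radial3}. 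So the whole matter comes down to the following comparison, and this is where the univalence of $g$ enters in an essential way:
\[
\text{there is an absolute } M\ge 1 \text{ with } M^{-1}|g(z)|\le|g(\zeta)|\le M|g(z)| \text{ whenever } |\zeta-z|\le\tfrac{1-|z|}{2}.
\]

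To prove this comparison I would argue conformally. The Euclidean disc $B(z,\tfrac{1-|z|}{2})$ is a pseudohyperbolic ball of bounded (absolute) radius, hence of bounded hyperbolic diameter in $\mathbb{D}$. Since $g$ is univalent, it is a conformal bijection of $\mathbb{D}$ onto $\Omega:=g(\mathbb{D})$, and the hyperbolic metric is a conformal invariant; therefore $g(z)$ and $g(\zeta)$ remain within an absolute hyperbolic distance of each other in $\Omega$. The weights of interest arise from $g$ with $\Re\,g>0$, so $0\notin\Omega$, whence $\mathrm{dist}(w,\partial\Omega)\le|w|$ for every $w\in\Omega$; combining this with the standard two-sided estimate $\rho_{\Omega}(w)\asymp\mathrm{dist}(w,\partial\Omega)^{-1}$ for simply connected $\Omega$ shows that $w\mapsto\log|w|$ is Lipschitz with respect to $\rho_{\Omega}$. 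A bounded hyperbolic displacement then forces $\bigl|\log|g(\zeta)|-\log|g(z)|\bigr|$ to be bounded by an absolute constant, which is exactly the comparison. Equivalently, one may work with the univalent branch of $\log g$, available since $\Omega$ is simply connected and omits $0$, and invoke the Koebe distortion theorem. I expect this comparison to be the main obstacle: it is precisely the point at which univalence is used, through the conformal invariance of the hyperbolic metric and the simple connectivity needed for the Koebe bound, and one must take care that the constant $M$ remains uniform as $z\to\partial\mathbb{D}$. Once the comparison is available, (i) follows from the displayed sub-mean value inequality, and (ii) follows from (i) as explained above.
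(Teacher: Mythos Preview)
Your argument is correct, but you take a substantially longer route than the paper. The paper observes that since $g$ is non-vanishing on the simply connected domain $\mathbb{D}$ (in context, $\Re g>0$), one can choose an analytic branch of $g^{\gamma/p}$, and then
\[
|f(z)|^{p}\,|g(z)|^{\gamma}=\bigl|f(z)\,g(z)^{\gamma/p}\bigr|^{p}
\]
is itself subharmonic. Applying the sub-mean value inequality directly to this function over $B(z,\tfrac{1-|z|}{2})$ gives
\[
|f(z)|^{p}|g(z)|^{\gamma}\le\frac{4}{(1-|z|)^{2}}\int_{B(z,(1-|z|)/2)}|f(\zeta)|^{p}|g(\zeta)|^{\gamma}\,dA(\zeta)\le\frac{4}{(1-|z|)^{2}}\,\|f\|_{A^{p}_{\phi}}^{p},
\]
which is \eqref{non-radial3} in one line, with no distortion lemma needed. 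Part (ii) is then derived from (i) via Cauchy's formula, exactly as you do.

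The difference, then, is that you apply subharmonicity to $|f|^{p}$ and afterwards have to compare $|g(\zeta)|$ with $|g(z)|$ across the disc $B(z,\tfrac{1-|z|}{2})$; this forces you into the hyperbolic/Koebe distortion argument, which is where univalence and $0\notin g(\mathbb{D})$ genuinely enter your proof. The paper instead folds the weight into the subharmonic function from the outset, so univalence is used only to guarantee that $g$ omits $0$ and hence that $g^{\gamma/p}$ is single-valued. Your approach is sound and has the virtue of isolating a reusable geometric lemma (the bounded-ratio distortion of a univalent map on pseudohyperbolic balls), but for this particular estimate the paper's trick is considerably shorter.
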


\begin{proof}
Let $f\in A^p_{\phi}$. Note that the function 
\[
|f(z)|^{p} |g(z)|^{\gamma}, ~z\in \mathbb{D},
\]
is subharmonic, as it is a power of the modulus of $f(z)g(z)^{\gamma/p}$. Let $z \in \mathbb{D},$ and $r = \frac{1 - |z|}{2}.$ We obtain that
\begin{eqnarray*}
    |f(z)|^{p}
    |g(z)|^{\gamma}
    &\leq & 
    \frac{4}{(1-|z|)^{2}}
    \int_{B(z,r)} 
    |f(\zeta)|^{p}
    |g(\zeta)|^{\gamma}
    dA(\zeta)\\&\leq &\frac{4}{(1-|z|)^{2}}\|f\|_{A^p_{\phi}}^p,
\end{eqnarray*}
which gives the part (i).

\smallskip

To prove (ii), we rely on the Cauchy's integral formula.  
Using the previously derived point wise bound, we have 
\begin{equation*}
    \frac{|f^{(n)}(0)|}{n!} \leq \frac{1}{2\pi} \int_{|\xi|=r} \frac{|f(\xi)|}{|\xi|^{n+1}} |d\xi| \leq\frac{C_p^{'} \|f\|_{A^p_{\phi}}}{r^n(1 - r)^{\frac{2}{p}}} \int_0^{2\pi} \frac{1}{ |g(r e^{i\theta})|^{\frac{\gamma}{p}}} d\theta.
\end{equation*}
The proof is complete.
\end{proof}

We will need the asymptotic behavior of certain integrals, as given in the next proposition. These will be utilized in Sections $3$ and $4$. Let us recall an elementary result on the gamma function $\Gamma(x)$, for $x>0$, which can be proved using the Stirling formula
$\Gamma(x+1)\sim \sqrt{2\pi x} \left(\frac{x}{e}\right)^{x},$ for large $x$, cf. Rudin \cite{Rudin}, Chapter 8. Here, the symbol $\sim$ means that the ratios tend to $1$, asymptotically. Also, two functions $J(x)$ and $K(x)$ defined on $[a,b)$ satisfy 
\[
J(x)\asymp K(x),\quad x\rightarrow b^{-}, 
\]
if $J/K$ is bounded above and below in a small interval $(a+\delta, b)$. 

The proof of the lemma is omitted.

       \begin{lemma}\label{gamma}
           For real numbers $t$ and $x$, we have
           \[
           \frac{\Gamma(t+n)}{\Gamma(x+n)}\sim n^{t-x},\quad \text{for large } n.
           \]
       \end{lemma}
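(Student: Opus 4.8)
The plan is to establish the asymptotic $\Gamma(t+n)/\Gamma(x+n) \sim n^{t-x}$ for large $n$ by a direct substitution into Stirling's formula, as suggested by the remark immediately preceding the statement. Writing $\Gamma(t+n) = \Gamma((t+n-1)+1)$ and applying Stirling, I would get $\Gamma(t+n) \sim \sqrt{2\pi(t+n-1)}\,\bigl(\tfrac{t+n-1}{e}\bigr)^{t+n-1}$, and similarly for the denominator with $x$ in place of $t$. Forming the ratio, the factors of $e$ and the square-root prefactors combine into expressions of the form $(t+n-1)^{t+n-1}/(x+n-1)^{x+n-1}$ times a prefactor that tends to $1$, since $\sqrt{(t+n-1)/(x+n-1)} \to 1$ as $n \to \infty$.

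The core of the argument is therefore to analyze $(t+n-1)^{t+n-1}/(x+n-1)^{x+n-1}$. The clean way to do this is to take logarithms and write everything in terms of $n$. Setting $a = t-1$ and $b = x-1$ for brevity, I would compute
\[
\log\frac{(n+a)^{n+a}}{(n+b)^{n+b}} = (n+a)\log(n+a) - (n+b)\log(n+b).
\]
Expanding $\log(n+a) = \log n + \log(1 + a/n) = \log n + a/n + O(n^{-2})$ and collecting terms, the leading $\log n$ contributions give $(n+a)\log n - (n+b)\log n = (a-b)\log n = (t-x)\log n$, while the correction terms from $\log(1+a/n)$ and $\log(1+b/n)$ contribute an amount that tends to a finite constant (in fact $a - b$) plus a vanishing remainder. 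Exponentiating recovers $n^{t-x}$ up to a bounded multiplicative factor converging to a constant; combining with the prefactor analysis and checking that the limiting constant is exactly $1$ yields the stated $\sim$.

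The main obstacle is the bookkeeping in the logarithmic expansion: one must track the $O(1)$ terms carefully to confirm that the limiting constant is precisely $1$ rather than some other positive number, since $\sim$ demands that the ratio $\Gamma(t+n)\big/\bigl(\Gamma(x+n)\,n^{t-x}\bigr)$ tends to $1$ and not merely that it stays bounded. A cleaner alternative that sidesteps the delicate constant-chasing is to invoke the standard asymptotic $\Gamma(s+a)/\Gamma(s+b) \sim s^{a-b}$ as $s \to \infty$ (a classical consequence of Stirling, valid for fixed real $a,b$), applied with $s = n$, $a = t$, $b = x$; this is exactly the claimed identity and reduces the whole lemma to citing a known gamma-function asymptotic. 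Given that the paper already states the proof is omitted, I would present the substitution-and-logarithm route as the honest derivation and note the classical asymptotic as the one-line justification.
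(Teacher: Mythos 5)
Your Stirling-substitution route is exactly what the paper intends --- the paper omits the proof entirely, offering only the Stirling formula as a hint --- and your computation does go through: the logarithmic expansion gives $(t-x)\log n + (t-x) + o(1)$, and the resulting constant $e^{t-x}$ is cancelled by the factor $e^{x-t}$ coming from the $e$-powers in Stirling's formula, so the limiting ratio is indeed $1$. One small correction to your bookkeeping: the claim that the $e$-factors and square roots combine into ``a prefactor that tends to $1$'' is inaccurate as stated, since the $e$-factors contribute the constant $e^{x-t}$; it is precisely this constant that performs the cancellation you need, and dropping it would leave the spurious constant $e^{t-x}$ in the final answer.
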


With this lemma, we obtain the following. (We refer to Zhan et al. \cite{Zhan}, p. 3, for similar types of integrals without proofs.)

\begin{proposition}\label{integration-pro}
    
The following hold for $p>0.$
\begin{itemize}
\item[(i)] If $\alpha>-1$, then
$\int_{0}^{1}r^{pn+1} (1 - r^{2})^{\alpha}dr\sim \frac{1}{(n+1)^{\alpha+1}},\quad \text{n}\rightarrow \infty.
$
\item[(ii)] If $a,b>0,$ then 
\[
\int_{0}^{1} r^{pn+1}(1-r)^{a}\Big(\log\frac{e}{1-r}\Big)^{b} dr\asymp \frac{(\log(n+1))^{b}}{(n+1)^{a+1}},\quad \text{n}\rightarrow \infty.
\]
\end{itemize}
\end{proposition}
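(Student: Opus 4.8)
The plan is to evaluate both integrals by the substitution $s = r^2$ in case (i) and by a direct change of variables near the endpoint $r=1$ in case (ii), reducing each to a recognizable Beta-type integral whose asymptotics follow from Lemma \ref{gamma}. For part (i), I would set $s=r^2$, so that $dr = \tfrac{1}{2}s^{-1/2}\,ds$ and $r^{pn+1}\,dr = \tfrac{1}{2}s^{(pn)/2}\,ds$, turning the integral into
\[
\int_0^1 r^{pn+1}(1-r^2)^\alpha\,dr = \frac{1}{2}\int_0^1 s^{pn/2}(1-s)^\alpha\,ds = \frac{1}{2}\,B\!\left(\tfrac{pn}{2}+1,\,\alpha+1\right),
\]
where $B$ is the Euler Beta function. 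Writing this in terms of Gamma functions gives
\[
\frac{1}{2}\,\frac{\Gamma\!\left(\tfrac{pn}{2}+1\right)\Gamma(\alpha+1)}{\Gamma\!\left(\tfrac{pn}{2}+\alpha+2\right)},
\]
and the ratio $\Gamma\!\left(\tfrac{pn}{2}+1\right)/\Gamma\!\left(\tfrac{pn}{2}+\alpha+2\right)$ is exactly of the form handled by Lemma \ref{gamma}, yielding the order $(n+1)^{-(\alpha+1)}$ up to the constant $\tfrac{1}{2}\Gamma(\alpha+1)$. The only care needed is to absorb the factor $p$ in $\tfrac{pn}{2}$ into the asymptotic variable, which merely changes the implied constant and not the exponent.

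For part (ii), the logarithmic factor prevents an exact Beta-function identity, so I would instead establish the two-sided bound $\asymp$ directly. I would substitute $u = 1-r$ to localize the behavior near $u=0$, where the integrand concentrates for large $n$. The factor $r^{pn+1}=(1-u)^{pn+1}$ behaves like $e^{-pnu}$ for small $u$, so the integral is comparable to a Laplace-type integral
\[
\int_0^{1} e^{-pnu}\,u^{a}\Big(\log\tfrac{e}{u}\Big)^{b}\,du.
\]
Rescaling $u = v/n$ shows the dominant contribution comes from $u \asymp 1/n$, where $\log\tfrac{e}{u}\asymp \log(n+1)$, and the remaining $u^a e^{-pnu}\,du$ integrates to order $n^{-(a+1)}$. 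Pulling out the slowly varying logarithmic factor at the scale $u \sim 1/n$ gives the claimed comparability to $(\log(n+1))^b/(n+1)^{a+1}$; to make the $\asymp$ rigorous I would bound $\log\tfrac{e}{u}$ above and below by constant multiples of $\log(n+1)$ on the range $u \in [c_1/n, c_2/n]$ carrying the bulk of the mass, and control the tails separately.

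The main obstacle is part (ii): unlike (i), there is no clean closed form, and the logarithmic weight is only slowly varying, so one must be careful to show that it can be treated as essentially constant on the scale $u \sim 1/n$ where the mass concentrates, while simultaneously verifying that the contributions from $u \gg 1/n$ are negligible. Establishing both the upper and lower bounds of the $\asymp$ relation requires splitting the domain of integration and estimating each piece, rather than invoking a single asymptotic identity. The comparison of $r^{pn}$ with $e^{-pnu}$ must also be controlled uniformly; since $(1-u)^{pn} = e^{pn\log(1-u)}$ and $\log(1-u)\le -u$ with a matching lower bound $\log(1-u)\ge -u-u^2$ valid for small $u$, these elementary inequalities suffice to trap the integrand between two Laplace integrals of the same order, after which Lemma \ref{gamma} (or a direct Gamma-integral evaluation of $\int_0^\infty u^a e^{-pnu}\,du$) delivers the power $(n+1)^{-(a+1)}$.
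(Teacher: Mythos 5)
Your proposal is correct. For part (i) it coincides with the paper's proof verbatim: the substitution $s=r^2$ turns the integral into $\tfrac12 B\!\left(\tfrac{pn}{2}+1,\alpha+1\right)$ and Lemma \ref{gamma} gives the decay rate (the paper, like you, treats the multiplicative constant loosely, so the stated $\sim$ is really an $\asymp$). For part (ii) the underlying idea is the same — localize at $1-r\asymp 1/n$, treat the logarithm as comparable to $\log(n+1)$ there, and split off the tails — but your technical route differs. The paper stays entirely elementary: for the lower bound it restricts to $t\in[0,1/(n+1)]$ and uses $(1-t)^{pn+1}\ge\bigl(\tfrac{n}{n+1}\bigr)^{pn+1}\to e^{-p}$ together with $\log(e/t)\ge\log(n+1)$ on that interval; for the upper bound it bounds $\log\tfrac{e}{1-r}$ by $\log(e(n+1))$ on $[0,n/(n+1)]$ and reuses the Beta-function asymptotic $\int_0^1 r^{pn+1}(1-r)^a dr\asymp(n+1)^{-(a+1)}$ from part (i). You instead trap $(1-u)^{pn}$ between $e^{-pn(u+u^2)}$ and $e^{-pnu}$ and reduce to Laplace/Gamma integrals via the rescaling $u=v/n$. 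Your version is slightly heavier but more systematic, and it has one concrete advantage: your explicit treatment of the near-endpoint tail $u\in(0,c_1/n]$, via the rescaled bound $\int_0^{1/n}u^a\bigl(\log\tfrac{e}{u}\bigr)^b du\lesssim (\log(n+1))^b(n+1)^{-(a+1)}$, covers precisely the range $r\in[n/(n+1),1]$ that the paper's upper-bound computation, as written, silently omits (there the integrand's logarithmic factor exceeds $\log(e(n+1))$, so the paper's pointwise bound does not apply). So your argument is not only valid but, in that one respect, more complete than the printed proof.
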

\begin{proof} The integral in (i) is precisely, equal to
\[
       \frac{\Gamma\!\left(\tfrac{pn}{2} + 1\right)
             \Gamma(\alpha + 1)}
            {\Gamma\!\left(\tfrac{pn}{2} + \alpha + 2\right)}.
\]

\noindent Now, by applying Lemma~\ref{gamma}, we obtain the required asymptotic estimate.

Now, we prove (ii). Denote the integral in (ii) by $I_n$. Make a change of variable $t = 1 - r$. Then
$
I_n = \int_{0}^{1} (1-t)^{pn+1} t^{a} \left(\log(e/t)\right)^{b} \, dt.
$ Consider $t \in [0, \tfrac{1}{n+1}]$.  
For such $t$, we have $(1-t)^{pn+1} \ge (\frac{n}{n+1})^{pn+1}.$ Therefore,
\[
I_n \ge \left(\frac{n}{n+1}\right)^{pn+1} 
\int_{0}^{1/(n+1)} t^{a} \left(\log\frac{e}{t}\right)^{b} \, dt.
\]
The integral in the right side of the above inequality is lower bounded by $(\int_0^{1/(n+1)} t^adt)[\log(n+1)]^b$. Since $\left(\frac{n}{n+1}\right)^{pn+1} \to e^{-p}$ as $n \to \infty$, 
we conclude that, there is a constant $C>0$ such that
$
I_n \geq C \frac{(\log(n+1))^{b}}{(n+1)^{a+1}},
$
for large $n$.

For the reverse inequality, observe that
\begin{eqnarray*}
    \int_{0}^{\frac{n}{n+1}} r^{pn+1}(1-r)^{a}\Big(\log\frac{e}{1-r}\Big)^{b} dr&\leq& (\log(e(n+1)))^{b}\int_{0}^{1} r^{pn+1}(1-r)^{a}dr
    \sim\frac{(\log(n+1))^{b}}{(n+1)^{a+1}},
\end{eqnarray*}
for large $n.$ 

\end{proof}

 \section{Continuity of weighted shifts on $A^p_{\phi}$}

 In this section, we obtain sufficient conditions for the weighted shifts $B_w$ to be continuous on $A^p_{\phi}$ corresponding to $w=\{w_n\}_{n=1}^{\infty}$,  where the weight $\phi$ is either normal or the specific non-radial examples considered in Section $2$. It is known that the unweighted shift $B$ is bounded on the unweighted Bergman space $A^p$ (\cite{Zhu} or \cite{Zhu1}). Denote the coefficient multiplier operator $z^n\mapsto w_nz^n$, ($n\geq 0$, $w_0=0$) acting on $A^p_{\phi}$ by $M_w$. For $M_w$, we have assumed that the weight sequence starts with $w_0=0$. Then, it follows that $B_w=BM_w$, and so, if $B$ and $M_w$ are continuous on $A^p_{\phi}$, then so is $B_w$. We now provide conditions for $B$ and $M_w$ to be continuous.

It is easy to see that $A^2_{\phi}$ can be regarded as a weighted sequence space as $\{z^n/\|z^n\|:n\geq 0\}$ is an orthonormal basis in $A^2_{\phi}$. It is easy to check that $B_w:A^2_{\phi}\rightarrow A^2_{\phi}$ is unitarily equivalent to a standard weighted backward shift $B_{\lambda}$ on the sequence space $\ell^2$, where $\lambda=(\lambda_n)_{n\geq 1}$, and
\begin{equation}\label{new-weight}
\lambda_n:=w_n \frac{\|z^{n-1}\|}{\|z^n\|},~n\geq 1.
\end{equation}
Note that 
\begin{equation}\label{new-weightt}
\|z^n\|_{A^2_{\phi}}=\Big (2\int_{0}^{1}\phi(r)r^{2n+1}dr\Big)^{1/2}, ~n\geq 0.
\end{equation}
Thus, in the case of $p=2$ and radial $\phi(z)$, most of the dynamical and some operator theoretic aspects of $B_w$ follow from the usual weighted shifts. In particular, $B_w$ is continuous on $A^2_{\phi}$ if and only if the above sequence $\{\lambda_n\}$ is bounded. However, for an arbitrary $p\geq 1$, the continuity (and dynamics) of $B_w$ on $A^p_{\phi}$ is not straightforward to understand even in the unweighted case of $A^p$. Indeed, these involve the study of coefficient multipliers in $A^p_{\phi}$. The notion of coefficient multipliers between various analytic function spaces is a classical theme in functional analytic complex analysis. We refer to the book by Jevti{\'c}, Vukoti{\'c}, and Arsenovi{\'c}~\cite{Jevtic}; see, also, Blasco~\cite{Blasco}, Buckley, Ramanujan, and Vukoti{\'c}~\cite{Buckley}, Shields and Williams~\cite{Shields2}, and Vukoti{\'c}~\cite{Dragan} for discussions on coefficient multipliers of the standard weighted Bergman spaces.

 We first prove the continuity of $B$ in a slightly general set up. Recall that the weights considered in this paper are integrable.
 
\begin{proposition}\label{bounded1}
The unweighted backward shift $B$ is continuous on $A^{p}_{\phi}$, $1\leq p<\infty$.
 \end{proposition}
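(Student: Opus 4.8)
The plan is to exploit the representation $Bf(z) = (f(z)-f(0))/z$, which is analytic on $\mathbb{D}$ since the apparent singularity at the origin is removable, and to estimate $\|Bf\|_{A^p_\phi}^p = \int_{\mathbb{D}} |Bf|^p \phi\, dA$ by splitting $\mathbb{D}$ into the inner disc $\{|z|\le 1/2\}$ and the outer annulus $\{1/2 \le |z| < 1\}$. On the outer annulus the factor $1/z$ is harmless, since $|1/z|\le 2$ there; using $|f(z)-f(0)|^p \le 2^{p-1}(|f(z)|^p + |f(0)|^p)$ reduces the corresponding integral to $\int_{\mathbb{D}} |f|^p \phi\, dA = \|f\|_{A^p_\phi}^p$ plus the term $|f(0)|^p \int_{\mathbb{D}}\phi\, dA$, the latter being finite because $\phi$ is integrable. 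On the inner disc, $Bf$ is analytic, and Cauchy's integral formula, $Bf(z)=\frac{1}{2\pi i}\oint_{|\zeta|=3/4}\frac{f(\zeta)}{\zeta(\zeta-z)}\,d\zeta$, gives a pointwise bound $|Bf(z)| \le C_0 \max_{|\zeta|=3/4}|f(\zeta)|$ for all $|z|\le 1/2$; integrating the $p$-th power against $\phi$ over the inner disc then produces $C_0^p\big(\max_{|\zeta|=3/4}|f(\zeta)|\big)^p \int_{|z|\le 1/2}\phi\, dA$, again finite by integrability.

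Both remaining pieces thus hinge on a single estimate: that point evaluations of $f$ on a fixed compact subset of $\mathbb{D}$ --- in particular $|f(0)|$ and $\max_{|\zeta|=3/4}|f(\zeta)|$ --- are dominated by $\|f\|_{A^p_\phi}$. To obtain this I would use that $|f|^p$ is subharmonic: for $z_0$ with $|z_0|\le 3/4$ and $R = 1/8$, the sub-mean value property over $B(z_0,R)\subset\{|z|<7/8\}$ yields $|f(z_0)|^p \le \frac{64}{\pi}\int_{|z|<7/8}|f|^p\, dA$, uniformly in $z_0$. It then remains to absorb the unweighted integral $\int_{|z|<7/8}|f|^p\, dA$ into the weighted norm, which is possible because $\phi$ is strictly positive and hence bounded below by some constant $c>0$ on the compact set $\{|z|\le 7/8\}$; this gives $\int_{|z|<7/8}|f|^p\, dA \le c^{-1}\|f\|_{A^p_\phi}^p$ and completes the estimate.

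I expect this last step to be the genuine obstacle. Integrability of $\phi$ alone controls only the size terms $\int_{\mathbb{D}}\phi\, dA$, whereas it is the strict positivity of $\phi$ on the open disc that guarantees a uniform lower bound on a compact sub-disc and thereby the boundedness of point evaluations; without such a lower bound (for instance for a merely integrable, non-radial weight whose angular essential infimum vanishes on too many radii) the constant-term functional $f\mapsto f(0)$ can fail to be continuous, and the argument would break down precisely at the passage from $\int_{|z|<7/8}|f|^p\, dA$ to $\|f\|_{A^p_\phi}^p$. For the weights treated in this paper --- the normal radial weights and the explicit non-radial examples of Section $2$, all of which are continuous and strictly positive --- this lower bound is immediate, so I would either invoke continuity of $\phi$ on compacts or, for radial $\phi$, replace the sub-mean value step by the sharper identity $f(0) = \big(\int_{\mathbb{D}} f\phi\, dA\big)\big/\big(\int_{\mathbb{D}}\phi\, dA\big)$, which combined with H\"older's inequality bounds $|f(0)|$ by $(\int_{\mathbb{D}}\phi\, dA)^{-1/p}\,\|f\|_{A^p_\phi}$ directly.
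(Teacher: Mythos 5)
Your route is a direct, quantitative norm estimate, and its geometric core is fine: the splitting at $|z|=1/2$, the bound $|1/z|\le 2$ on the annulus, and the Cauchy-formula bound $|Bf(z)|\le C_0\max_{|\zeta|=3/4}|f(\zeta)|$ on the inner disc are all correct. The genuine gap is exactly where you suspected, but it is worse than you state. The proposition is asserted for the paper's general notion of weight: a strictly positive, \emph{integrable} function, with no continuity or radiality hypothesis. Your key step claims that strict positivity of $\phi$ forces $\operatorname{ess\,inf}_{|z|\le 7/8}\phi=c>0$; this is false for merely measurable weights, since a strictly positive function can have essential infimum zero on every disc (a minimum on a compact set is guaranteed by continuity or lower semicontinuity, not by positivity). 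Your two repairs do work where they apply: continuity covers all the explicit weights in the paper, and for radial $\phi$ the mean-value identity bounds $f(0)$; in fact, for any radial integrable weight, the paper's Proposition 2.3 together with $\|z^n\|_{A^p_\phi}^p\ge 2(7/8)^{np+1}\int_{7/8}^{1}\phi(r)\,dr$ and a geometric series bounds evaluations on all of $\{|\zeta|\le 3/4\}$, not just at the origin. But neither repair covers a general non-radial, discontinuous weight, so as written your argument does not prove the proposition in the stated generality.

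The paper's proof sidesteps this obstacle entirely because it is soft rather than quantitative. It only verifies that $B$ maps $A^p_\phi$ into itself: on a small closed ball $\overline{B(0,\delta)}$ the integrand $|Bf|^p\phi$ is integrable because $Bf$ is continuous (hence bounded) there and $\phi$ is integrable; off the ball, $|Bf|\le\delta^{-1}|f-f(0)|$ and $f-f(0)\in A^p_\phi$ since constants lie in the space (integrability of $\phi$ again). Continuity then follows from the closed graph theorem. The idea you missed is that this argument never needs bounded point evaluations: if $f_n\to f$ and $Bf_n\to g$ in norm, then along a subsequence both converge a.e.\ (they converge in $L^p(\phi\,dA)$ and $\phi>0$ a.e.), so the constants $f_{n}(0)=f_{n}(z)-zBf_{n}(z)$ converge along that subsequence, and analyticity of $g$ at the origin forces the limit to be $f(0)$, whence $g=Bf$. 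Thus the uniform bound --- the whole difficulty in your approach --- comes for free from completeness of the space. What your method buys, when it works (continuous or radial weights), is an explicit, trackable constant $\|B\|\le C$; what the paper's method buys is the full generality of the statement, at the cost of producing no effective bound and of leaning on the completeness of $A^p_\phi$, which the paper takes for granted.
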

 \begin{proof}
 For $f\in A^p_{\phi}$,  note that 
 \[
 Bf(z)=\begin{cases}
\frac{f(z)-f(0)}{z},& z\neq 0\\
f^{\prime}(0), & z=0.
\end{cases}
\] 
Fix a small closed ball $\overline{B(0,\delta)}\subseteq \mathbb{D}$. As $\phi(z)$ is integrable and $Bf(z)$ is continuous on the above closed ball, we have $\int_{B(0,\delta)} |Bf(z)|^p \phi(z)dA(z)<\infty$. Also, in the region $\mathbb{D}\setminus B(0,\delta) $ we have 
\[
|Bf(z)|\leq \frac{1}{\delta}|f(z)-f(0)|.
\]
The function $f(z)-f(0)$ lies in $A^p_{\phi}$ because the space contains constants. It follows that\\ $\int_{\mathbb{D}\setminus B(0,\delta)} |Bf(z)|^p \phi(z)dA(z)<\infty.$ Thus, we get that $Bf\in A^p_{\phi}$. By the well-known closed graph theorem, $B$ is bounded.
 \end{proof}

 Next, we turn our attention to study the continuity of a coefficient multiplier $M_w$ on weighted Bergman spaces with emphasis on $A^p_{\alpha},$ $A^p_{a,b},$ and non-radial weights consider in Section 2. We will obtain our results using the norm estimates of the coefficient functional $k_n$, which we derived in Propositions \ref{Bergman1}, \ref{non-radial}, and \ref{Noon-radial2}.

  \begin{theorem}
 The following hold for a weight sequence $w=\{w_n\}_{n=0}^{\infty}$, and $1\leq p<\infty$.
 \begin{itemize}
\item[(i)] Let $\phi(z)$ be a normal weight, and let
\[
\tilde{M}(r):=\sum_{n\geq 1} |w_n|\Big(\int_0^{1}R^{np+1}\phi(R)dR\Big)^{-1/p} r^n.
\]
If there exist constants $C_1>0$ and $-\infty<t<1/p$ such that 
\begin{equation}\label{est1}
\phi(r)^{1/p}\tilde{M}(r)\leq \frac{C_1}{(1-r)^{t}}
\end{equation}
for all $0<r<1$, then $M_w$ is a bounded operator on $A^p_{\phi}$. 
\item[(ii)] If there are constants $C_2>0$ and $-\infty<s<\frac{\alpha+1}{p}$ such that 
\[
\sum_{n\geq 1} |w_n|(n+1)^{\frac{\alpha+1}{p}} r^n\leq \frac{C_2}{(1-r)^{s}},
\]
for all $0<r<1$, then $M_w$ is bounded on $A^p_{\alpha}$.
\item[(iii)] If there are constants $C_3>0$ and $-\infty<u<\frac{a+1-b}{p}$ such that 
\[
\sum_{n\geq 1} |w_n|\frac{(n+1)^{\frac{a+1}{p}}}{(\log(n+1))^{\frac{b}{p}}} r^n\leq \frac{C_3}{(1-r)^{u}},
\]
for all $0<r<1$, then $M_w$ is bounded on $A^p_{a,b}$.
\end{itemize}
 \end{theorem}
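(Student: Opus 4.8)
The plan is to prove part (i) directly from the coefficient bound of Proposition~\ref{Bergman1}, and then to deduce (ii) and (iii) from (i) by feeding in the integral asymptotics of Proposition~\ref{integration-pro}. For (i), write $f=\sum_{n\geq 0}\lambda_n z^n$ with $\lambda_n=k_n(f)=f^{(n)}(0)/n!$. Proposition~\ref{Bergman1} gives $|\lambda_n|\leq C\big(\int_0^1 R^{np+1}\phi(R)\,dR\big)^{-1/p}\|f\|_{A^p_\phi}$, so that $M_w f(z)=\sum_{n\geq 1}w_n\lambda_n z^n$ obeys the pointwise bound $|M_w f(z)|\leq C\,\tilde M(|z|)\,\|f\|_{A^p_\phi}$. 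Since $\phi>0$, the hypothesis \eqref{est1} forces $\tilde M(r)<\infty$ on $[0,1)$, so $M_w f$ is a genuine analytic function. Because $\phi$ is radial, integrating the $p$-th power over $\mathbb{D}$ collapses to a one-dimensional integral,
\[
\|M_w f\|_{A^p_\phi}^p\leq C^p\|f\|_{A^p_\phi}^p\cdot 2\int_0^1 \tilde M(r)^p\phi(r)\,r\,dr .
\]
Now \eqref{est1} says exactly that $\tilde M(r)^p\phi(r)\leq C_1^p(1-r)^{-tp}$, and $\int_0^1 (1-r)^{-tp}r\,dr<\infty$ precisely when $tp<1$, i.e. $t<1/p$. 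This yields $\|M_w f\|_{A^p_\phi}\leq C'\|f\|_{A^p_\phi}$ and hence the boundedness of $M_w$.

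For (ii) and (iii) the strategy is to recognize the two displayed sums as $\tilde M(r)$, up to the relation $\asymp$, for the respective weights, and then to check that the hypothesis reduces to a bound of the form \eqref{est1} with $t<1/p$. For $\phi(r)=(1+\alpha)(1-r^2)^\alpha$, Proposition~\ref{integration-pro}(i) gives $\big(\int_0^1 R^{np+1}\phi(R)\,dR\big)^{-1/p}\asymp (n+1)^{(\alpha+1)/p}$, so $\tilde M(r)\asymp\sum_{n\geq 1}|w_n|(n+1)^{(\alpha+1)/p}r^n$, which is bounded by $C_2(1-r)^{-s}$ by hypothesis. Since $\phi(r)^{1/p}\asymp (1-r)^{\alpha/p}$ near $r=1$ and is bounded away from it, we obtain $\phi(r)^{1/p}\tilde M(r)\leq C(1-r)^{-(s-\alpha/p)}$, so part (i) applies with $t=s-\alpha/p$, and $t<1/p$ is precisely $s<(\alpha+1)/p$.

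For (iii), with $\phi(r)=(1-r)^a(\log\frac{e}{1-r})^b$, Proposition~\ref{integration-pro}(ii) gives $\big(\int_0^1 R^{np+1}\phi(R)\,dR\big)^{-1/p}\asymp (n+1)^{(a+1)/p}(\log(n+1))^{-b/p}$, so the displayed sum is again $\asymp\tilde M(r)\leq C_3(1-r)^{-u}$. Here $\phi(r)^{1/p}=(1-r)^{a/p}(\log\frac{e}{1-r})^{b/p}$ carries a logarithmic factor, and the step I expect to require the most care is disposing of it cleanly: using the elementary bound $\log\frac{e}{1-r}\leq C(1-r)^{-1}$ valid on $[0,1)$, one gets $(\log\frac{e}{1-r})^{b/p}\leq C(1-r)^{-b/p}$, whence $\phi(r)^{1/p}\tilde M(r)\leq C(1-r)^{-(u-(a-b)/p)}$. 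Applying part (i) with $t=u-(a-b)/p$, the requirement $t<1/p$ becomes exactly $u<(a+1-b)/p$, which is the stated threshold. The only genuinely delicate point throughout is the reduction to the single radial integral and the exact location $t=1/p$ of the integrability threshold; the specializations in (ii) and (iii) are then bookkeeping with the asymptotics of Proposition~\ref{integration-pro}, the logarithmic weight in (iii) being the one place where a deliberately crude power bound is used and is responsible for the $-b$ appearing in the exponent.
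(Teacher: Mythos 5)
Your proof is correct and follows essentially the same route as the paper: part (i) via the coefficient bound of Proposition~2.3 plus integration in polar coordinates against the hypothesis $\tilde M(r)^p\phi(r)\leq C_1^p(1-r)^{-tp}$, and parts (ii)--(iii) by reducing to (i) through the asymptotics of Proposition~2.7. Your explicit absorption of the logarithmic factor via $\log\frac{e}{1-r}\leq C(1-r)^{-1}$, giving $t=u-(a-b)/p$, is exactly the bookkeeping the paper leaves implicit in its one-line deduction of (iii).
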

 \begin{proof}
We need to show that, if $f(z)=\sum_{n\geq 0} \hat{f}(n)z^n\in A^p_{\phi}$, then 
$M_w(f)=\sum_{n\geq 1}w_n\hat{f}(n)z^{n}\in A^{p}_{\phi}.$ Now, recall the norm estimates of coefficient functionals from Proposition \ref{Bergman1}, i.e.  
\[
|\hat{f}(n)|\leq \frac{C}{\|z^n\|}\|f\|.
\]
Hence, for $|z|<r<1$, we have
\[
|M_wf(z)|\leq \sum_{n\geq 1}|w_n||\hat{f}(n)||z|^n\leq \tilde{M}(r)\|f\|.
\] 
From our hypothesis, it follows that, there are constants $C$ and $c$ such that
\begin{eqnarray*}
\int_{\mathbb{D}} |M_wf(z)|^{p} \phi(z) dA(z)&\leq& C\|f\|^{p}\int_{0}^{1}\int_{0}^{2\pi} (\tilde{M}(r))^{p}\phi(r)\frac{rdrd\theta}{\pi}\\
&\leq& c\|f\|^{p} \int_{0}^{1} \frac{r}{(1-r)^{pt}}dr\\
&=& c\|f\|^{p} \frac{\Gamma(2)\Gamma(-pt+1)}{\Gamma(3-pt)},
\end{eqnarray*}
which is finite as $-\infty<t<\frac{1}{p}.$ This shows that $M_w(f)\in A^p_{\phi}$.  Hence, (i) holds.

The result in \textnormal{(ii)} can be seen as follows. Indeed, using Proposition \ref{integration-pro}, we have that
$\int_{0}^{1} R^{pn+1} \phi(R)\,dR 
    \sim {(n+1)^{-\alpha - 1}},$ as $n \to \infty.
$ Hence,
$
    \tilde{M}(r) 
    \asymp \sum_{n \geq 1} 
      |w_n| (n+1)^{\frac{\alpha +1}{p}} r^n.
$
Now, using \textnormal{(i)}, we can deduce that the assertion in \textnormal{(ii)} holds.

\noindent Similarly for $\phi(r)=(1-r)^{a}(\log\frac{e}{1-r})^{b},~~a,b>0$, from the Proposition \eqref{integration-pro}, we have
\[
    \int_{0}^{1} R^{pn+1} \phi(R)\,dR 
    \asymp \frac{(\log(n+1))^{b}}{(n+1)^{a+ 1}},
    \qquad \text{as } n \to \infty.
\]
Therefore,
\[
    \tilde{M}(r) 
\asymp \sum_{n \geq 1} 
      |w_n| \frac{(n+1)^{\frac{a+1}{p}}}{(\log(n+1))^{\frac{b}{p}}} r^n.
\] Hence, (iii) holds.
 The proof is now complete.
 \end{proof}

 In the same spirit, we give sufficient conditions for $M_w$ to be continuous for the non-radial cases, below.
 \begin{theorem}
The following assertions hold, when $1\leq p<\infty$.
\begin{itemize}
    \item[(i)] For $\phi(z)=1-|\Re(z)|$, suppose there exist constants $C_{4}>0$ and $-\infty < v < \tfrac{1}{p}$ such that 
    \[
    \sum_{n\geq 1} |w_n|(n+1)^{3/p} r^n 
       \leq \frac{C_4}{(1-r)^{v}}, 
       \qquad 0<r<1.
    \]
    Then $M_{w}$ induces a bounded operator on $A^{p}_{\phi}$.
    
    \item[(ii)] For $\phi(z)=|g(z)|^{\gamma}$ with $0<\gamma<1$, suppose there exist constants $C_{5}>0$ and $k\in\mathbb{R}$ such that 
    \[
    \sum_{n\geq 1}\inf_{0<R<1}
    \left(
        \frac{\int_{0}^{2\pi}|g(Re^{i\theta})|^{-\gamma/p}\, d\theta}
            {R^{n}(1-R)^{2/p}}
    \right)
    |w_n|\, r^{n}
    \leq \frac{C_{5}}{(1-r)^{k}}
    \quad \text{and}~ \int_{0}^{1}\int_{0}^{2\pi}
    \frac{r|g(re^{i\theta})|^{\gamma}}
         {(1-r)^{pk}}
    \, d\theta\, dr < \infty,
    \]
   where $0<r<1.$ Then $M_{w}$ defines a bounded operator on $A^{p}_{\phi}$.
\end{itemize}
\end{theorem}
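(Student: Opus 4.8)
The plan is to mirror the argument used for the preceding theorem on normal weights: reduce the boundedness of $M_w$ to showing $M_w f \in A^p_\phi$ for every $f=\sum_{n\ge 0}\hat f(n)z^n\in A^p_\phi$, bound $|M_wf(z)|$ pointwise on $|z|\le r$ by the prescribed power series in $r$, and then integrate this bound against $\phi$ over $\mathbb{D}$. Since the resulting estimate is linear in $\|f\|_{A^p_\phi}$, boundedness follows at once (or via the closed graph theorem, as in Proposition \ref{bounded1}).

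For part (i) I would start from the coefficient bound in Proposition \ref{non-radial}(ii), namely $|\hat f(n)|\le C_p'(n+1)^{3/p}\|f\|_{A^p_\phi}$. Summing and invoking the hypothesis gives, for $|z|\le r<1$,
\[
|M_wf(z)|\le \sum_{n\ge 1}|w_n|\,|\hat f(n)|\,r^n \le C_p'\|f\|\sum_{n\ge 1}|w_n|(n+1)^{3/p}r^n \le \frac{C_p'C_4\|f\|}{(1-r)^{v}}.
\]
The decisive observation is that, although $\phi(z)=1-|\Re(z)|$ is non-radial, its angular average is bounded: writing $\phi(re^{i\theta})=1-r|\cos\theta|$ and using $\int_0^{2\pi}|\cos\theta|\,d\theta=4$, one gets $\int_0^{2\pi}\phi(re^{i\theta})\,d\theta=2\pi-4r\le 2\pi$. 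Hence
\[
\int_{\mathbb{D}}|M_wf(z)|^p\phi(z)\,dA(z)\le C\|f\|^p\int_0^1\frac{r}{(1-r)^{pv}}\Big(\int_0^{2\pi}\phi(re^{i\theta})\,d\theta\Big)dr\le C'\|f\|^p\int_0^1\frac{r\,dr}{(1-r)^{pv}},
\]
which is finite precisely because $v<1/p$, i.e. $pv<1$. This yields (i).

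For part (ii) the coefficient estimate is the $R$-dependent bound from Proposition \ref{Noon-radial2}(ii). As it holds for every $R\in(0,1)$ while its left-hand side is independent of $R$, I would take the infimum over $R$ to obtain the sharpest bound $|\hat f(n)|\le C_p\|f\|\,a_n$, where $a_n:=\inf_{0<R<1}\big(\int_0^{2\pi}|g(Re^{i\theta})|^{-\gamma/p}\,d\theta\big)/(R^n(1-R)^{2/p})$ is exactly the quantity appearing in the first hypothesis. That hypothesis then gives, for $|z|\le r<1$, the pointwise bound $|M_wf(z)|\le C_p\|f\|\sum_{n\ge 1}|w_n|a_n r^n\le C_pC_5\|f\|/(1-r)^{k}$, and integrating against $\phi(z)=|g(z)|^\gamma$ produces
\[
\int_{\mathbb{D}}|M_wf(z)|^p\phi(z)\,dA(z)\le C\|f\|^p\int_0^1\int_0^{2\pi}\frac{r\,|g(re^{i\theta})|^{\gamma}}{(1-r)^{pk}}\,d\theta\,dr,
\]
which is finite by the second hypothesis.

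Both parts are routine once the correct coefficient estimate is inserted; the only genuine subtlety lies in the interplay of the two hypotheses in (ii). The exponent $k$ must be chosen simultaneously to control the radial growth of the multiplier series (first hypothesis) and to keep the weighted integral $\int_{\mathbb{D}}(1-|z|)^{-pk}|g(z)|^{\gamma}\,dA(z)$ convergent (second hypothesis). I expect this balancing of the two conditions—rather than any single estimate—to be the main point to get right, whereas in (i) the non-radiality of $\phi$ causes no difficulty because its angular average is uniformly bounded and the finiteness collapses to the familiar integral $\int_0^1 r(1-r)^{-pv}\,dr$.
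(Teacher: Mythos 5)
Your proposal is correct and follows essentially the same route as the paper: insert the coefficient bounds from Propositions \ref{non-radial} and \ref{Noon-radial2} into the series for $M_wf$, obtain the pointwise bound $C\|f\|(1-r)^{-v}$ (resp. $C\|f\|(1-r)^{-k}$ after taking the infimum over $R$), and integrate against the weight. The only cosmetic differences are that the paper evaluates the resulting integral in (i) exactly via Gamma functions where you simply bound the angular average of $1-r|\cos\theta|$ by $2\pi$, and that you spell out the final integration step in (ii), which the paper leaves implicit.
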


\begin{proof}
The argument follows exactly as in the previous theorem. 
Indeed, for $f(z)=\sum_{n\geq 0} \hat{f}(n) z^{n} \in A^{p}_{\phi},$
it follows from Proposition~\ref{non-radial} that 
\[
|M_w f(z)|
   \leq \sum_{n\geq 1} |w_n|\,|\hat{f}(n)|\,|z|^n
   \leq \lVert f\rVert\sum_{n\geq 1} |w_n|(n+1)^{\frac{3}{p}} r^{n}
   =: \tilde{N}(r)\lVert f\rVert~~(\text{say}),
\]
for $|z|<r<1.$ By the assumption, we then obtain for some constants $C^{'}$ and $c^{'}$ such that
\begin{align*}
\int_{\mathbb{D}} |M_w f(z)|^{p} \phi(z)\, dA(z)
&\leq C^{'}\lVert f\rVert^{p}\int_{0}^{1}\!\!\int_{0}^{2\pi} (\tilde{N}(r))^{p}\phi(r)\,
      \frac{r\,dr\,d\theta}{\pi} \\[3pt]
&\leq c^{'}\lVert f\rVert^{p}\int_{0}^{1}\!\!\int_{0}^{2\pi}
      \frac{r(1-r|\cos\theta|)}{(1-r)^{pv}}
      \,\frac{dr\,d\theta}{\pi} \\[3pt]
&= c^{'}\lVert f\rVert^{p} \int_{0}^{1}
      \left( \frac{2r}{(1-r)^{pv}} 
      - \frac{4r^{2}}{\pi(1-r)^{pv}} \right) dr \\[3pt]
&=c^{'}\lVert f\rVert^{p} \left(
      \frac{2\,\Gamma(2)\Gamma(1-pv)}{\Gamma(3-pv)} 
      - \frac{4\,\Gamma(3)\Gamma(1-pv)}{\Gamma(4-pv)} 
      \right).
\end{align*}
The last expression is finite whenever $-\infty < v < \tfrac{1}{p}$. 
Hence $M_{w}(f)\in A^{p}_{\phi}$, and the operator $M_{w}$ is bounded on $A^{p}_{\phi}$. Hence, (i) holds.\\
Similarly, for $\phi(z)=|g(z)|^{\gamma}$ with $0<\gamma<1$, Proposition~\ref{Noon-radial2} yields
\[
|M_{w} f(z)|
   \leq \lVert f\rVert\sum_{n\geq 1}\inf_{0<R<1}
    \left( 
       \frac{\int_{0}^{2\pi}|g(Re^{i\theta})|^{-\gamma/p}\, d\theta}
            {R^{n}(1-R)^{2/p}}
    \right)
    |w_n|\, r^{n}.
\]
Along with the stated assumptions, this implies that $M_{w}$ is a bounded operator on $A^{p}_{\phi}$.

\end{proof}

Applying the previous two theorems, we can get multipliers which are new in $A^p_{a,b}$, and also, in the non-radial cases.

 \begin{example}
(i) The sequence $w_n = \left({n+1}\right)^{-\frac{\alpha+1}{p}}, \quad n \ge 1,$ defines a bounded multiplier $M_w$ on $A^p_{\alpha}$; consequently, the operator $B_w$ is bounded on $A^p_{\alpha}$. 

(ii) If we take 
\[
w_n = \frac{(\log(n+1))^{\frac{b}{p}}}{(n+1)^{\frac{a+1}{p}}}, \quad n \ge 1,
\]
then $M_w$ is a bounded multiplier on $A^p_{a,b}$, and hence $B_w$ is bounded on $A^p_{a,b}$.
 
(iii) Consider the special case of $A^1$. Since the multiplier corresponding to $w_n=-1$ for $n\geq 1$, is continuous, from (i) it follows that the weight sequence
$
w_n:=\frac{n}{n+1},~n\geq 1,
$
gives a bounded multiplier, and hence, $B_w$ corresponding to this weight is bounded on $A^1$.
\end{example}

\begin{example}
The sequence $w_n = ({n+1})^{-\frac{3}{p}},$ $n \ge 1,$ defines a bounded multiplier $M_w$ on the weighted Bergman space $A^p_{\phi}$, where $\phi(z) = 1 - |\Re(z)|$. Similarly, for any $-\infty<k<\tfrac{1-\gamma}{p}$, if we choose  $g(z)=\tfrac{1+z}{1-z}$ and $w_{n}=(n+1)^{-(2+\gamma)/p}$ for $n\geq 1$, 
then $M_{w}$ acts as a multiplier on $A^{p}_{\phi}$ corresponding to 
$\phi(z)=\left|\tfrac{1+z}{1-z}\right|^{\gamma}$.
\end{example}
\section{Hypercyclicity and chaos}

In this section, we find necessary and sufficient conditions for $B_w$ to be hypercyclic, mixing, and chaotic on $A^p_{\phi}$ when $\phi$ is a normal weight or certain subharmonic weight. \textsf{For this section, we will assume that all weight sequences $w=\{w_n\}$ are having non-zero terms.} To apply the Gethner-Shapiro criterion in $A^p_{\phi}$, we need the denseness of polynomials, as given below, for integrable weights. This is well known for $A^p_{\alpha}$, $\alpha>-1$ (cf. Hedenmalm et. al. \cite{Zhu}), $A^1_{\phi}$ for special normal weights (cf. Shields and Williams \cite{Shields2}). Originally, the denseness of polynomials in case the weight is radial, goes back to Mergelyan \cite{Mergelyan}. Also, see the works of Abkar \cite{Abkar} and Pelaez \cite{Pelaez} for certain non-radially weighted Bergman spaces. The proof of the following result is included for completeness purpose.
 
\begin{proposition}\label{P}
Let $\phi(z)$ be a (not necessarily radial) weight, and $1 \leq p < \infty$. Then, the space of all polynomials is dense in the weighted Bergman space $A^{p}_{\phi}$, $1\leq p<\infty$.
\end{proposition}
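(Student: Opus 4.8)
The plan is to use the classical Abel (dilation) approximation. First I would reduce the problem to functions that are holomorphic on a neighbourhood of $\overline{\mathbb{D}}$. For any such $h$, its Taylor partial sums $S_N h$ converge to $h$ uniformly on $\overline{\mathbb{D}}$, and since $\phi$ is integrable one gets
\[
\|h - S_N h\|_{A^p_\phi}^p = \int_{\mathbb{D}} |h - S_N h|^p \phi \, dA \le \Big(\sup_{\overline{\mathbb{D}}}|h - S_N h|\Big)^p \int_{\mathbb{D}} \phi \, dA \longrightarrow 0,
\]
so every function holomorphic past $\overline{\mathbb{D}}$ already lies in the closure of the polynomials. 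This step uses nothing about $\phi$ beyond integrability.

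Next, given an arbitrary $f \in A^p_\phi$, I would introduce its dilates $f_s(z) := f(sz)$ for $0<s<1$. Each $f_s$ is holomorphic on $\tfrac{1}{s}\mathbb{D} \supset \overline{\mathbb{D}}$, hence belongs to the polynomial closure by the first step. Since that closure is a closed subspace, the whole proposition reduces to the single convergence claim $\|f_s - f\|_{A^p_\phi}\to 0$ as $s\to 1^-$. To organise this, I would fix $\rho\in(0,1)$ and split the defining integral over $\{|z|\le\rho\}$ and over the annulus $\{\rho<|z|<1\}$. On the compact disc $\{|z|\le\rho\}$ the dilates converge to $f$ uniformly (as $f_s\to f$ locally uniformly on $\mathbb{D}$), so that contribution tends to $0$ for each fixed $\rho$ by integrability of $\phi$; and the term $\int_{\rho<|z|<1}|f|^p\phi\,dA$ is the tail of the convergent integral $\|f\|_{A^p_\phi}^p$, hence small once $\rho$ is near $1$.

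The main obstacle is the dilated tail $\int_{\rho<|z|<1}|f_s|^p\phi\,dA$, which must be controlled \emph{uniformly} for $s$ near $1$; this is exactly where the boundary behaviour of the (possibly non-radial) weight enters. I would attack it through the sub-mean value property recalled before Proposition~\ref{non-radial}: since $|f|^p$ is subharmonic and, for $s<1$, the disc $B\!\left(sz,\tfrac{1-|z|}{2}\right)$ is contained in $\mathbb{D}$, one has $|f(sz)|^p\le \tfrac{4}{\pi(1-|z|)^2}\int_{B(sz,(1-|z|)/2)}|f|^p\,dA$, which supplies the integrability needed to pass to the limit. Concretely, dominating $|f_s-f|^p\phi\le 2^{p-1}(|f_s|^p+|f|^p)\phi$, it suffices by the generalized (Pratt) dominated convergence theorem to verify $\int_{\mathbb{D}}|f_s|^p\phi\,dA\to\int_{\mathbb{D}}|f|^p\phi\,dA$. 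For radial $\phi$ this is immediate: integrating out the angle first, the circular means of the subharmonic function $|f|^p$ increase with the radius, so the monotone convergence theorem applies directly; I expect the delicate point to be the general (non-radial) weight, where the pointwise monotonicity in $s$ is lost and one must instead dominate the dilates by the subharmonic mean value estimate above. Once this norm convergence of the dilates is in hand, closedness of the polynomial subspace completes the argument.
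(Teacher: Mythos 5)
Your overall strategy coincides with the paper's own proof: approximate $f$ by its dilates $f_s$, approximate each dilate uniformly on $\overline{\mathbb{D}}$ by Taylor polynomials, and use integrability of $\phi$ to pass to the $A^p_\phi$-norm. Up to the obstacle you flag, everything you write is correct, and in the radial case your argument (monotonicity of the circular means $\int_0^{2\pi}|f(sre^{i\theta})|^p\,d\theta$ in $s$, monotone convergence, then Pratt's generalized dominated convergence to upgrade $\|f_s\|\to\|f\|$ to $\|f_s-f\|\to 0$) is complete. It is in fact \emph{more} careful than the paper's proof, which at the corresponding point invokes the pointwise inequality $|f(rz)|\le|f(z)|$; that inequality is false for general analytic $f$ (take $f(z)=1-z$ and $z=r=\tfrac12$), and the statement that survives is exactly the monotonicity of circular means you use --- an argument available only when $\phi$ is radial.

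The step you leave open is therefore a genuine gap, and it cannot be filled. First, the sub-mean value inequality bounds $|f(sz)|^p$ by an \emph{unweighted} local area integral of $|f|^p$, and membership in $A^p_\phi$ gives no control on unweighted integrals near the boundary; to reinsert the weight one needs a local comparability property of $\phi$ on the discs $B\bigl(z,\tfrac{1-|z|}{2}\bigr)$, which is precisely what Propositions \ref{non-radial} and \ref{Noon-radial2} exploit for the two special non-radial weights, and which a general weight need not satisfy. Second, and decisively, the proposition is false in the stated generality: let $S(z)=\exp\bigl(-\tfrac{1+z}{1-z}\bigr)$ and $\phi=|S|^p$, a strictly positive, bounded (hence integrable), non-radial weight. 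Then $f\mapsto Sf$ is an isometric isomorphism of $A^p_\phi$ onto the unweighted Bergman space $A^p$ carrying polynomials to polynomial multiples of $S$, so polynomials are dense in $A^p_\phi$ if and only if the atomic singular inner function $S$ is a cyclic vector of $A^p$ --- and it is not, by the Korenblum--Roberts theorem (see the cyclicity chapter of \cite{Zhu}). Consequently no argument can close your gap: the proposition (and hence both your proof and the paper's) is valid only for radial weights, or for non-radial weights with additional structure such as the local doubling property used elsewhere in the paper.
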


\begin{proof}
Let $f \in A^{p}_{\phi}$, and define its dilations by $f_{r}(z) = f(rz)$ for $0 < r < 1$.  
Each function $f_{r}$ is analytic on a larger disk $|z|<1/r$, so it can be uniformly approximated on $\overline{\mathbb{D}}$ by polynomials, i.e., by the partial sums of its Taylor series.  
Thus, it suffices to show that $f$ can be approximated in the $A^{p}_{\phi}$-norm by its dilatation:
$\lVert f - f_{r} \rVert_{p}^{p} \to 0 \quad \text{as } r \to {1^{-}}.$

For $\varepsilon > 0$, we can choose $\delta$ such that
$
\int_{|z| > \delta} |f(z)|^{p} \phi(z) \, dA(z) < \varepsilon,
$
and $|f(z) - f(rz)| < \varepsilon,~ \text{for}~~ |z| \le \delta,$ and  $r$ which is sufficiently close to $1$. Since $|f(rz)| \le |f(z)|$, it follows that
$
\int_{|z| > \delta} |f(rz)|^{p} \phi(z) \, dA(z) < \varepsilon.
$
Now, consider the inequality
\begin{equation}\label{den}
    \lVert f - f_{r} \rVert_{p}^{p}
    \leq \int_{|z| \leq \delta} |f(z) - f(rz)|^{p} \phi(z) \, dA(z)
      + \int_{|z| > \delta} (|f(z)| + |f(rz)|)^{p} \phi(z) \, dA(z).
\end{equation}
Then,
\[
\int_{|z| \le \delta} |f(z) - f(rz)|^{p} \phi(z) \, dA(z)
    < \varepsilon^{p} \int_{|z| \le \delta} \phi(z) \, dA(z),
\]
and the second integral in \eqref{den} is at most $2^p \varepsilon$. Combining the two estimates, we see that the family $f_r$, $0<r<1$ approximates $f$, completing the proof.
\end{proof}

Two applications are given below.

\begin{corollary}\label{Herg}
Let $\phi(z):=|g(z)|^{\gamma}$, where $0<\gamma<1$, and $g(z)$ is a univalent analytic function on $\mathbb{D}$, having positive real part, and $g(0)=1$. Then, the polynomials are dense in $A^p_{\phi}$, $1\leq p<\infty$.
\end{corollary}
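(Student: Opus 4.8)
The plan is to obtain the corollary as a direct consequence of Proposition~\ref{P}, which already establishes that polynomials are dense in $A^p_\phi$ for an \emph{arbitrary} weight $\phi$. Hence the only thing I need to verify is that $\phi(z)=|g(z)|^{\gamma}$ is a genuine weight in the sense of Section~2, that is, strictly positive and integrable on $\mathbb{D}$. Positivity is immediate: since $\Re g>0$ on $\mathbb{D}$, the function $g$ never vanishes, and therefore $|g(z)|^{\gamma}>0$ for every $z\in\mathbb{D}$.

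The substantive step, where the actual work lies, is to show that $\int_{\mathbb{D}}|g(z)|^{\gamma}\,dA(z)<\infty$. Here I would invoke the classical fact that an analytic function with positive real part on $\mathbb{D}$ belongs to the Hardy space $H^{s}$ for every $0<s<1$. Concretely, since $\Re g>0$ and $g(0)=1$, the Riesz--Herglotz representation supplies a positive Borel measure $\mu$ on $[0,2\pi]$ of total mass $1$ with
\[
g(z)=\int_{0}^{2\pi}\frac{e^{i\theta}+z}{e^{i\theta}-z}\,d\mu(\theta),\qquad z\in\mathbb{D};
\]
equivalently, the Cayley transform $h=(g-1)/(g+1)$ maps $\mathbb{D}$ into itself, so that $g=(1+h)/(1-h)$ with $h$ a Schur function. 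From either description one extracts the standard conclusion that, because $0<\gamma<1$, the integral means $\int_{0}^{2\pi}|g(re^{i\theta})|^{\gamma}\,d\theta$ stay bounded as $r\to 1^{-}$; this is precisely the membership $g\in H^{\gamma}$.

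Granting this $H^{\gamma}$-bound, I would finish by passing to polar coordinates:
\[
\int_{\mathbb{D}}|g(z)|^{\gamma}\,dA(z)
=\frac{1}{\pi}\int_{0}^{1}\!\!\int_{0}^{2\pi}|g(re^{i\theta})|^{\gamma}\,r\,d\theta\,dr
\le\frac{1}{\pi}\Big(\sup_{0<r<1}\int_{0}^{2\pi}|g(re^{i\theta})|^{\gamma}\,d\theta\Big)\int_{0}^{1}r\,dr<\infty,
\]
so that $\phi=|g|^{\gamma}$ is indeed a weight and Proposition~\ref{P} applies to yield the density of polynomials in $A^p_\phi$. I expect the sole delicate ingredient to be the $H^{\gamma}$-membership of functions of positive real part; once that classical estimate is in hand, the remainder is routine.
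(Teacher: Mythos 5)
Your proposal is correct, and it shares the paper's overall skeleton (check that $\phi=|g|^{\gamma}$ is a strictly positive, integrable weight, then invoke Proposition~\ref{P}), but the key integrability estimate is obtained by a genuinely different route. The paper stays entirely elementary: from the Herglotz representation with $\mu$ a probability measure it reads off the pointwise growth bound $|g(z)|\leq 2/(1-|z|)$, so that $|g(z)|^{\gamma}\lesssim (1-|z|)^{-\gamma}$, and area-integrability follows immediately because $\gamma<1$ makes the exponent larger than $-1$. You instead invoke the classical theorem that a function of positive real part lies in the Hardy space $H^{s}$ for every $0<s<1$ (Kolmogorov's theorem, or Littlewood subordination of $g$ to the half-plane map $(1+z)/(1-z)$ combined with the direct computation that this map is in $H^{s}$ for $s<1$), and then integrate the bounded circular means radially. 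Both arguments use the hypotheses in the same way ($\Re g>0$ for the representation, $\gamma<1$ for the crucial estimate, and neither actually needs univalence), but yours trades the paper's two-line self-contained bound for a citation of a deeper classical fact; in return your intermediate conclusion is strictly stronger, since bounded integral means of $|g|^{\gamma}$ cannot be deduced from the paper's pointwise estimate, which only gives means of order $(1-r)^{-\gamma}$. For the purpose of this corollary the extra strength buys nothing, so the paper's route is the more economical one, but your argument is sound provided the $H^{\gamma}$-membership is either cited precisely or backed by the subordination computation you sketch.
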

\begin{proof}
By the well known Herglotz representation for positive harmonic functions on $\mathbb{D}$, there exists a probability measure $\mu$ on $[0,2\pi)$, such that
\[
g(z)=\int_{0}^{2\pi} \frac{e^{i\theta}+z}{e^{i\theta}-z} d\mu(\theta),
\]
$z\in \mathbb{D}$. Hence, we find that 
\[
|g(z)|\leq C \frac{1}{1-|z|},
\]
for some constant, independent of $z$. It can now be seen that $|g(z)|^{\gamma}$ is integrable on $\mathbb{D}$, and so, the polynomials are dense in $A^p_{\phi}$ by the preceding proposition.
\end{proof}

See, also, \cite{Pelaez}, Ch. $1$, where the authors obtained the previous result for more general univalent functions using a different approach.

\begin{corollary}
The polynomials are dense in $A^p_{\phi}$, $1\leq p<\infty$, where $\phi(z)=1-|\Re(z)|$, $z\in \mathbb{D}$.
\end{corollary}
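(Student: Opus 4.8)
The plan is to obtain this corollary as an immediate consequence of Proposition~\ref{P}, whose only standing hypothesis is that $\phi$ be a weight in the sense adopted here, namely strictly positive and integrable on $\mathbb{D}$ (and, crucially, Proposition~\ref{P} was proved \emph{without} assuming radiality). Thus the whole task reduces to checking these two properties for $\phi(z)=1-|\Re(z)|$, after which the density of polynomials in $A^p_{\phi}$ for $1\leq p<\infty$ follows verbatim.

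First I would verify strict positivity. Writing $z=x+iy\in\mathbb{D}$, we have $|\Re(z)|=|x|\leq|z|<1$, so that $\phi(z)=1-|x|\geq 1-|z|>0$ throughout the open disc. Next I would verify integrability, which here is especially transparent: since $0<\phi(z)\leq 1$ on $\mathbb{D}$ and the normalized area measure is finite (indeed $A(\mathbb{D})=1$), we get $\int_{\mathbb{D}}\phi(z)\,dA(z)\leq A(\mathbb{D})<\infty$. Hence $\phi$ is a legitimate weight in the required sense.

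With both properties established, Proposition~\ref{P} applies directly and yields the assertion. I do not expect a genuine obstacle here: in contrast with Corollary~\ref{Herg}, where integrability of $|g(z)|^{\gamma}$ had to be extracted from the Herglotz representation together with the growth bound $|g(z)|\leq C/(1-|z|)$, the present weight is bounded, so integrability is automatic. The only point deserving a line of justification is the strict positivity, which rests on the elementary inequality $|\Re(z)|\leq|z|$.
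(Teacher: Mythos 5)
Your proof is correct and follows exactly the paper's own argument: the paper likewise deduces the corollary directly from Proposition~\ref{P}, noting simply that the weight is integrable (your explicit verification of positivity via $|\Re(z)|\leq|z|$ and of integrability via boundedness just fills in routine details the paper omits).
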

\begin{proof}
This follows at once from Proposition \ref{P} as the weight is integrable.
\end{proof}

 The above result can, also, be obtained alternatively. Indeed, the inequality $1-r\leq 1-r|\cos \theta |\leq 1$, readily implies that the inclusions $A^p\hookrightarrow A^p_{\phi}\hookrightarrow A^p_1$ are continuous. Since the polynomials are dense in $A^p$ and $A^p_1$, we get the same result for $A^p_{\phi}$, where $A^p_1$ is the weighted Bergman space corresponding to $1-r$.

\subsection{Shifts on radially weighted Bergman spaces}
We characterize hypercyclic and mixing weighted shifts on $A^p_{\phi}$ for a radial weight $\phi$. In the Corollaries \ref{Hyper} and \ref{Hyperc}, we give complete characterizations for the hypercyclicity of $B_w$ on $A^p_{\phi}$ having the standard as well as logarithmic type weights, respectively. In particular, we prove that the unweighted shift $B$ is mixing on $A^p_{\phi}$, $1\leq p<\infty$, for all normal weights.

A general result on the hypercyclicity of $B_w$, which works even in non-radial cases, is given below.

\begin{theorem}\label{bsaf}
Let $\mathcal{E}$ be a separable Banach space of analytic functions on $\mathbb{D}$, with the following properties:
\begin{itemize}
\item[(E)] The evaluation functional $ev_z: f\mapsto f(z)$ is continuous at every $z\in \mathbb{D}$,
\item[(P)] the polynomials are dense in $\mathcal{E}$, and
\item[(C)] the $n$-the coefficient functional $k_n$ satisfies $\sup_{n\geq 0}\|k_n\|_{\mathcal{E}^*} \|z^n\|_{\mathcal{E}}<\infty$. 
\end{itemize}
If $B_w$ is continuous on $\mathcal{E}$, then $B_{w}$ is hypercyclic on $\mathcal{E}$ if and only if 
\[
\displaystyle \liminf_{n\rightarrow \infty}~\left(\frac{1}{|w_1\cdots w_n|}\|z^n\|_{\mathcal{E}}\right)=0.
\]
Also, $B_{w}$ is mixing on $\mathcal{E}$ if and only if 
\[
\displaystyle \lim_{n\rightarrow \infty}~\left(\frac{1}{|w_1\cdots w_n|}\|z^n\|_{\mathcal{E}}\right)=0.
\]
\end{theorem}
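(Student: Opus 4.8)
The plan is to set $a_n := \|z^n\|_{\mathcal{E}}/|w_1\cdots w_n|$, so that the two asserted conditions read $\liminf_n a_n=0$ and $\lim_n a_n=0$, and to treat sufficiency through the Gethner--Shapiro criterion (Theorem~\ref{thm-hypc}) and necessity by testing the orbit against the single coefficient functional $k_0$. The natural right inverse is the forward weighted shift $S$ given on polynomials by $Sz^n:=w_{n+1}^{-1}z^{n+1}$; since all weights are nonzero, it maps the (dense, by (P)) space of polynomials into itself and satisfies $B_wS=\mathrm{id}$ there. Two structural facts will be used repeatedly: first, $B_w^m$ annihilates every polynomial once $m$ exceeds its degree, so the requirement $B_w^{n_k}x\to 0$ in Theorem~\ref{thm-hypc} holds automatically on polynomials; second, a direct computation gives $\|S^{N}z^j\|_{\mathcal{E}}=|w_1\cdots w_j|\,a_{j+N}$, which converts every norm I must estimate into a statement about the numbers $a_n$.

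For the mixing direction, sufficiency is immediate: if $a_n\to 0$ then $\|S^{k}z^j\|=|w_1\cdots w_j|\,a_{j+k}\to 0$ for every fixed $j$, so Theorem~\ref{thm-hypc} applies with $n_k=k$ and yields mixing. The hypercyclic sufficiency is subtler, and this is where I expect the main obstacle. From $\liminf_n a_n=0$ one obtains only a subsequence along which $a_{n_k}\to 0$, whereas the criterion demands a \emph{single} increasing sequence $(\tilde n_k)$ with $S^{\tilde n_k}z^j\to 0$ simultaneously for all $j$, i.e.\ arbitrarily long windows $a_N,a_{N+1},\dots,a_{N+t}$ that are uniformly small. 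The resolution is to exploit continuity of $B_w$: from $|w_m|\,\|z^{m-1}\|\le\|B_w\|\,\|z^m\|$ one gets $a_{m-1}/a_m=|w_m|\,\|z^{m-1}\|/\|z^m\|\le\|B_w\|$, hence $a_{m-i}\le\|B_w\|^{\,i}a_m$. Choosing $n_k\uparrow\infty$ rapidly with $a_{n_k}\le(2\|B_w\|)^{-k}$ (fast enough that $\tilde n_k:=n_k-k$ still increases to $\infty$), I find for each fixed $j$ and all large $k$ that $a_{j+\tilde n_k}=a_{n_k-(k-j)}\le\|B_w\|^{\,k-j}a_{n_k}\le\|B_w\|^{-j}2^{-k}\to 0$. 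Thus $S^{\tilde n_k}z^j\to 0$ for every $j$, and Theorem~\ref{thm-hypc} gives hypercyclicity.

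For necessity I would argue contrapositively, using property (C) in the form $|k_m(f)|\le\|k_m\|\,\|f\|\le M\|f\|/\|z^m\|$ with $M=\sup_n\|k_n\|\,\|z^n\|$, together with the identity $k_0(B_w^m f)=(w_1\cdots w_m)\,k_m(f)$ obtained by tracking the constant term. Combining these yields $|k_0(B_w^m f)|\le M\|f\|/a_m$. If $\liminf_n a_n=c>0$, then $a_m\ge c/2$ for large $m$, so $|k_0(B_w^m h)|\le 2M\|h\|/c$ is bounded in $m$ for any fixed $h$; but the orbit of a hypercyclic vector $h$ must approximate $K\cdot 1$ for every $K$, forcing $|k_0(B_w^m h)|>K-\|k_0\|$ for suitable (necessarily large) $m$, a contradiction. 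The mixing necessity is the quantitative version of the same computation: applying the mixing property to $U=B(0,\delta)$ and $V=B(1,\delta)$ produces, for all large $m$, a point $x\in U$ with $|k_0(B_w^m x)|>1-\|k_0\|\delta$, whence $a_m<M\delta/(1-\|k_0\|\delta)$; letting $\delta\to 0^{+}$ gives $a_m\to 0$.

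Throughout, property (E) is what guarantees the coefficient functionals $k_n$ lie in $\mathcal{E}^{*}$ (so that (C) and the functional $k_0$ are meaningful), property (P) supplies the dense polynomials on which $S$ and the Gethner--Shapiro criterion operate, and property (C) drives both necessity arguments. The single genuine difficulty is the window/subsequence gap in the hypercyclic sufficiency, which I resolve by the boundedness estimate $a_{m-i}\le\|B_w\|^{\,i}a_m$; everything else is a routine translation between the $a_n$ and the required limits.
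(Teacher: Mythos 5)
Your proposal is correct, and its skeleton is the same as the paper's: necessity is extracted from a coefficient functional via property (C), and sufficiency comes from the Gethner--Shapiro criterion applied to the polynomials (dense by (P)) with the weighted forward shift $S$, using the identity $\|S^{N}z^{j}\|=|w_{1}\cdots w_{j}|\,a_{j+N}$. Where you genuinely differ is in the two technical steps, which you make self-contained while the paper outsources them. First, for hypercyclic sufficiency the paper acknowledges exactly the gap you identify (each monomial $z^{j}$ a priori only gets its own sequence $d_{k}$ with $S^{d_{k}}z^{j}\to 0$) and closes it by citing Lemma 4.2 of Grosse-Erdmann--Peris, which in effect allows the sequence in the criterion to depend on the vector; you instead build one common sequence by hand, using boundedness of $B_{w}$ in the form $a_{m-i}\le\|B_{w}\|^{i}a_{m}$ together with a rapid choice $a_{n_{k}}\le(2\|B_{w}\|)^{-k}$ and the shifted sequence $\tilde{n}_{k}=n_{k}-k$. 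This argument is correct (the inequality follows from $\|B_{w}z^{m}\|=|w_{m}|\,\|z^{m-1}\|\le\|B_{w}\|\,\|z^{m}\|$), and it is an elementary, fully explicit replacement for the cited lemma; it even exhibits a single sequence along which the criterion holds. Second, for mixing necessity the paper invokes Bonet's theorem that $\|T^{*n}x^{*}\|\to\infty$ for every nonzero functional when $T$ is mixing; your argument, running the definition of mixing on the pair of open sets $B(0,\delta)$ and $B(1,\delta)$ and letting $\delta\to 0^{+}$, is a direct proof of precisely the special case of that result which is needed, via the same identity $k_{0}(B_{w}^{m}f)=(w_{1}\cdots w_{m})k_{m}(f)$ that drives your hypercyclic necessity. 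In short: same architecture, but your version buys self-containedness (no appeal to Lemma 4.2 of Grosse-Erdmann--Peris or to Bonet) at the price of a somewhat longer write-up, whereas the paper buys brevity by citation.
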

\begin{proof}

If $B_{w}$ is hypercyclic on $\mathcal{E}$, having a hypercyclic vector $f(z)$, then the denseness of the orbit
$\{(B_{w})^{n}f:n\geq1 \}$, along with the continuity of some (non-zero) coefficient functional $k_j$ implies that 
\[
\{k_j(B_w^nf): n\geq 1\}
\]
is dense in $\mathbb{C}$. Now, computing the $j$-th coefficient of $B_w^n(f)$, we get that
   \begin{equation}\label{deri}
   \sup_{n\geq 1}~\left\lvert w_{j}w_{j+1}\cdots w_{n+j-1} \frac{f^{(n+j-1)}(0)}{(n+j-1)!}\right\rvert=\infty.
   \end{equation}
By making use of the property (C), we find that 
\[
\sup_{n\geq 1}\frac{|w_1\cdots w_n|}{\|z^n\|_{\mathcal{E}}}=\infty,
\]
as we wanted in the theorem. For the converse part, we recall and apply the Gethner-Shapiro criterion. Let $X_0$ be the space of all polynomials, which is dense in $\mathcal{E}$. Consider the forward weighted shift $S:X_0\rightarrow X_0$  given by $S(z^{j})= \frac{z^{j+1}}{w_{j+1}},~\hspace{.0cm} j\geq 0.$
     Trivially, 
$B_{w}Sf=f$, and $(B_{w})^nf\rightarrow 0$, as $~n\rightarrow \infty,
$
for all $f\in X_0$. It suffices to show that, there exists a strictly increasing sequence $\{m_k\}$ of natural numbers such that
\[
 \|S^{m_k}(z^j)\|_{\mathcal{E}}\rightarrow 0,
\]
as $k\rightarrow \infty$, for every monomial $z^j$. Note that
\[
S^n(z^j)=\frac{1}{w_{j+1}w_{j+2}\cdots w_{j+n}} z^{j+n}.
\]
Hence, for $p\geq 1,$ and by using the assumption in (ii), we get an increasing sequence $\{d_k\}$ such that $ S^{d_k}(z^n)\rightarrow 0,$ as $k\rightarrow \infty$. Now, Lemma 4.2 of \cite{Erdmann-Peris} yields a sequence $\{m_k\}$, required as above. 

For the mixing part, we only show the necessary condition. But this immediately follows from a result of Bonet \cite{Bonet1}: if $T$ is a mixing operator on a Banach space $X$, then the orbit of every non-zero $x^*\in X^*$ under $T^*$ satisfies $\lim_{n\rightarrow \infty}\|T^{*n}x^*\|=\infty$. In our context, choosing $x^*=k_j$ for some $j$, we get that $\lim_{n\rightarrow \infty}\|k_j\circ B_w^n\|=\infty.$
This concludes the proof. 

\end{proof}

A large class of Banach spaces of analytic functions is covered in the above theorem. From Proposition \ref{Bergman1}, it follows that the properties (E), (P) and (C) stated in the previous theorem are satisfied by $A^p_{\phi}$. (See, also, Bonet \cite{Bonet1} and M.J. Beltr\'{a}n \cite{Maria} for similar arguments on the hypercyclicity of derivative operators in certain weighted spaces of analytic functions.)

\begin{theorem}\label{Bergman2}
For $1\leq p<\infty,$ let $A^{p}_{\phi}$ be a weighted Bergman space for a normal weight $\phi$. If $B_{w}$ is bounded on $A^{p}_{\phi},$ then the following statements are equivalent.
    \begin{itemize}
        \item[(i)] $B_{w}$ is hypercyclic on $A^{p}_{\phi}.$
        \item[(ii)] $\displaystyle \limsup_{n\rightarrow \infty}\left({\lvert w_{1}\cdots w_{n} \rvert}\Big(\int_{0}^{1} r^{pn+1} \phi(r)dr\Big)^{-1/p}\right)=\infty$.
        \item[(iii)] $\displaystyle \liminf_{n\rightarrow \infty}~\left(\frac{1}{|w_1\cdots w_n|}\|z^n\|_{A^p_{\phi}}\right)=0$.
  \end{itemize}
\end{theorem}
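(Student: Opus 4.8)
The plan is to deduce this theorem as a corollary of the general criterion in Theorem~\ref{bsaf}, applied with $\mathcal{E}=A^p_\phi$, and then to reconcile the two formulations (ii) and (iii) by an elementary reciprocal computation. First I would verify that $A^p_\phi$ with a normal weight $\phi$ satisfies the three hypotheses (E), (P), and (C) demanded by Theorem~\ref{bsaf}. Property (E), the continuity of each evaluation functional $ev_z$, is immediate from the pointwise estimate of Proposition~\ref{Berg}, which bounds $|f(z)|$ by a constant multiple of $\|f\|_{A^p_\phi}$ for each fixed $z\in\mathbb{D}$. Property (C), the uniform bound $\sup_{n}\|k_n\|\,\|z^n\|_{A^p_\phi}<\infty$, is exactly the content of the ``in particular'' clause of Proposition~\ref{Bergman1}. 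Property (P), density of polynomials, follows from Proposition~\ref{P}, once one notes that a normal weight is integrable on $\mathbb{D}$: the conditions in \eqref{condition9} force $\phi(r)=o((1-r)^{\epsilon})$ with $\epsilon>-1$ near $r=1$, while continuity controls $\phi$ on $[0,r_0]$.

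Having checked (E), (P), (C), and since $B_w$ is assumed bounded on $A^p_\phi$, Theorem~\ref{bsaf} applies verbatim and yields the equivalence of (i) with the condition
\[
\liminf_{n\to\infty}\left(\frac{1}{|w_1\cdots w_n|}\,\|z^n\|_{A^p_\phi}\right)=0,
\]
which is precisely statement (iii). Thus the equivalence of (i) and (iii) requires no further argument beyond the reduction.

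It remains to show that (ii) and (iii) are equivalent, which is purely a matter of rewriting. I would recall from the computation in the proof of Proposition~\ref{Bergman1} that
\[
\|z^n\|_{A^p_\phi}^{p}=2\int_0^1 r^{pn+1}\phi(r)\,dr,
\]
so that $\|z^n\|_{A^p_\phi}=2^{1/p}\big(\int_0^1 r^{pn+1}\phi(r)\,dr\big)^{1/p}$. Setting
\[
a_n:=\frac{1}{|w_1\cdots w_n|}\,\|z^n\|_{A^p_\phi},\qquad
b_n:=|w_1\cdots w_n|\Big(\int_0^1 r^{pn+1}\phi(r)\,dr\Big)^{-1/p},
\]
one checks directly that $a_n b_n=2^{1/p}$ for every $n\ge 1$, and in particular both sequences are strictly positive. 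Consequently $\liminf_n a_n=0$ holds if and only if $\limsup_n(1/a_n)=\infty$, i.e.\ if and only if $\limsup_n b_n=\infty$, which is statement (ii). This closes the chain (i) $\Leftrightarrow$ (iii) $\Leftrightarrow$ (ii).

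The argument is essentially a bookkeeping reduction to the already-established general theorem, so I do not expect a deep obstacle here; the only point requiring genuine care is the verification of hypothesis (P), namely confirming that a normal weight indeed falls within the class of integrable weights for which Proposition~\ref{P} was proved, since the definition of a normal weight is stated on $[0,1)$ without explicitly assuming integrability.
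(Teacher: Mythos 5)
Your proposal is correct and follows essentially the same route as the paper, which proves Theorem~\ref{Bergman2} implicitly by the remark preceding it: apply the general criterion of Theorem~\ref{bsaf} to $\mathcal{E}=A^p_{\phi}$, verifying (E), (P), (C) via Propositions~\ref{Berg}, \ref{P}, and \ref{Bergman1}, with the identity $\|z^n\|_{A^p_{\phi}}^p=2\int_0^1 r^{pn+1}\phi(r)\,dr$ accounting for the equivalence of (ii) and (iii). Your explicit checks (integrability of normal weights for Proposition~\ref{P}, and the constant product $a_nb_n=2^{1/p}$) simply fill in details the paper leaves tacit.
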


\begin{remark}\label{mix-normal}
In a similar way, we get a characterization of mixing  for weighted shifts $B_w$. Indeed, for mixing $\limsup$ and $\liminf$ become $\lim$ in the above theorem. Its proof is an application of the Gethner-Shapiro criterion with $n_k:=k$ for $k\geq 1$. 
\end{remark}

       A more simplified characterizations of the hypercyclicity and mixing of $B_w$ for the standard radial weight $(1+\alpha)(1-\lvert z\rvert^{2})^{\alpha},$ $-1<\alpha<\infty,$ and the logarithmic type weight $(1-|z|)^{a}(\log \frac{e}{1-|z|})^{b},~ a> b> 0,$ are given below. These follow at once, from the preceding Theorem \ref{Bergman2} and Proposition \ref{integration-pro}.
\begin{corollary}\label{Hyper}
    For $1\leq p<\infty,$ the following hold, assuming that $B_w$ is bounded on $A^p_{\alpha}$.
    \begin{itemize}
        \item[(i)] $B_w$ is hypercyclic on $A^{p}_{\alpha}$ $\Longleftrightarrow$ $\sup_{n\geq 1}~~\left( \lvert w_{1}\cdots w_{n} \rvert (n+1)^{\frac{\alpha+1}{p}} \right)=\infty.$
        \item[(ii)] $B_w$ is mixing on $A^{p}_{\alpha}$ $\Longleftrightarrow$ 
        $\lim_{n\rightarrow \infty}~~\left( {\lvert w_{1}\cdots w_{n} \rvert} (n+1)^{\frac{\alpha+1}{p}}\right)=\infty.$
    \end{itemize}
\end{corollary}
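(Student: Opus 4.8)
The plan is to obtain both equivalences by directly specializing the general characterization in Theorem~\ref{Bergman2} (and its mixing counterpart recorded in Remark~\ref{mix-normal}) to the standard weight $\phi(r)=(1+\alpha)(1-r^2)^{\alpha}$, converting the integral $\int_0^1 r^{pn+1}\phi(r)\,dr$ that appears there into the explicit power $(n+1)^{(\alpha+1)/p}$ by means of the asymptotics of Proposition~\ref{integration-pro}(i).

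First I would record the asymptotic size of the defining integral. By Proposition~\ref{integration-pro}(i) we have $\int_0^1 r^{pn+1}(1-r^2)^{\alpha}\,dr \sim (n+1)^{-(\alpha+1)}$, so for the standard weight $\int_0^1 r^{pn+1}\phi(r)\,dr = (1+\alpha)\int_0^1 r^{pn+1}(1-r^2)^{\alpha}\,dr \sim (1+\alpha)(n+1)^{-(\alpha+1)}$, and taking the $(-1/p)$ power gives $\bigl(\int_0^1 r^{pn+1}\phi(r)\,dr\bigr)^{-1/p} \sim (1+\alpha)^{-1/p}(n+1)^{(\alpha+1)/p}$. Multiplying through by the nonnegative quantity $\lvert w_1\cdots w_n\rvert$ and noting that a fixed positive constant, together with an asymptotically-equivalent factor (ratio tending to $1$), cannot alter whether a sequence diverges to $+\infty$, I conclude that the expression in condition (ii) of Theorem~\ref{Bergman2} has limit superior $\infty$ exactly when $\limsup_{n} \lvert w_1\cdots w_n\rvert (n+1)^{(\alpha+1)/p}=\infty$, and has limit $\infty$ exactly when $\lim_{n} \lvert w_1\cdots w_n\rvert (n+1)^{(\alpha+1)/p}=\infty$.

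For part (i), Theorem~\ref{Bergman2} then yields that $B_w$ is hypercyclic if and only if $\limsup_{n} \lvert w_1\cdots w_n\rvert (n+1)^{(\alpha+1)/p}=\infty$; the only remaining cosmetic step is to replace this $\limsup$ by the $\sup$ of the statement, which is legitimate because for a nonnegative sequence the supremum is $\infty$ precisely when the limit superior is $\infty$ (a finite $\limsup$ forces the sequence to be eventually bounded, hence bounded, leaving only finitely many initial terms). For part (ii), the mixing characterization of Remark~\ref{mix-normal} is the $\lim$-version of Theorem~\ref{Bergman2}, so the identical substitution gives immediately that $B_w$ is mixing if and only if $\lim_{n} \lvert w_1\cdots w_n\rvert (n+1)^{(\alpha+1)/p}=\infty$.

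Since every step is a substitution into an already-established characterization, there is no genuine obstacle here; the result is a clean specialization. The only points deserving a word of care are the harmless $\sup$-versus-$\limsup$ reformulation in part (i) and the observation that passing through the asymptotic $\sim$ and the positive constant $(1+\alpha)^{-1/p}$ preserves divergence to $+\infty$.
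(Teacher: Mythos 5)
Your proposal is correct and is exactly the paper's argument: the paper proves this corollary by specializing Theorem~\ref{Bergman2} (with Remark~\ref{mix-normal} for the mixing case) via the asymptotics of Proposition~\ref{integration-pro}(i), which is precisely what you do, and your extra care about the constant $(1+\alpha)^{-1/p}$, the $\sim$-equivalence preserving divergence, and the $\sup$-versus-$\limsup$ reformulation only makes explicit what the paper leaves implicit.
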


\begin{corollary}\label{Hyperc}
Let $1 \leq p < \infty$, and $B_w$ be bounded on $A^p_{a,b}$. Then the following statements hold:

\begin{enumerate}
    \item[(i)]$B_w$ is hypercyclic on $A^{p}_{a,b}$ $\Longleftrightarrow$ $\sup_{n\geq 1}~~\left( \lvert w_{1}\cdots w_{n} \rvert \frac{(n+1)^{\frac{a+1}{p}}}{(\log(n+1))^{\frac{b}{p}}} \right)=\infty.$
        \item[(ii)] $B_w$ is mixing on $A^{p}_{a,b}$ $\Longleftrightarrow$ 
        $\lim_{n\rightarrow \infty}~~\left( {\lvert w_{1}\cdots w_{n} \rvert}\frac{(n+1)^{\frac{a+1}{p}}}{(\log(n+1))^{\frac{b}{p}}}\right)=\infty.$
\end{enumerate}
\end{corollary}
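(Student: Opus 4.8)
The plan is to deduce both equivalences directly from Theorem \ref{Bergman2} (together with Remark \ref{mix-normal} for the mixing part) by inserting the sharp asymptotics for the moment integrals recorded in Proposition \ref{integration-pro}(ii). First I would recall that the logarithmic weight $\phi(r)=(1-r)^{a}(\log\frac{e}{1-r})^{b}$ with $a>b>0$ is a normal weight, as already observed following the definition of normality; this is precisely what is needed to bring Theorem \ref{Bergman2} to bear. Granting the boundedness of $B_w$ on $A^p_{a,b}$ by hypothesis, Theorem \ref{Bergman2} tells us that $B_w$ is hypercyclic if and only if
\[
\limsup_{n\to\infty}\left(|w_{1}\cdots w_{n}|\Big(\int_{0}^{1} r^{pn+1}\phi(r)\,dr\Big)^{-1/p}\right)=\infty.
\]

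Next I would substitute the asymptotic estimate from Proposition \ref{integration-pro}(ii), namely
\[
\int_{0}^{1} r^{pn+1}\phi(r)\,dr\asymp \frac{(\log(n+1))^{b}}{(n+1)^{a+1}},\qquad n\to\infty,
\]
from which, raising to the power $-1/p$, one gets
\[
\Big(\int_{0}^{1} r^{pn+1}\phi(r)\,dr\Big)^{-1/p}\asymp \frac{(n+1)^{(a+1)/p}}{(\log(n+1))^{b/p}},\qquad n\to\infty.
\]
The key point is that the relation $\asymp$ means the two sides have a ratio bounded above and below by positive constants for all large $n$; consequently, multiplying through by the nonnegative sequence $|w_{1}\cdots w_{n}|$ preserves this two-sided comparison. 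Therefore the displayed $\limsup$ equals $\infty$ precisely when $\limsup_{n\to\infty}\big(|w_{1}\cdots w_{n}|(n+1)^{(a+1)/p}/(\log(n+1))^{b/p}\big)=\infty$. Since for a sequence of nonnegative numbers $\limsup=\infty$ is equivalent to the sequence being unbounded, this is in turn the same as $\sup_{n\geq1}\big(|w_{1}\cdots w_{n}|(n+1)^{(a+1)/p}/(\log(n+1))^{b/p}\big)=\infty$, which is exactly condition (i) of the corollary.

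For the mixing statement (ii), I would repeat the same substitution but start from Remark \ref{mix-normal}, which asserts that the mixing characterization is obtained from Theorem \ref{Bergman2} upon replacing $\limsup$ by $\lim$. The same observation about $\asymp$ applies verbatim: multiplying the two-sided comparison by $|w_{1}\cdots w_{n}|$ shows that the limit of one product is $\infty$ if and only if the limit of the other is, giving the stated condition. The only point deserving care — and the sole mild obstacle — is that the estimate in Proposition \ref{integration-pro}(ii) is only an asymptotic ($\asymp$ as $n\to\infty$) rather than a global two-sided bound; but this suffices, because both the unboundedness of the sequence (for hypercyclicity) and its divergence to $\infty$ (for mixing) are tail properties, unaffected by the finitely many initial terms where the comparison need not hold.
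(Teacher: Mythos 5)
Your proof is correct and takes essentially the same route as the paper, which deduces the corollary ``at once'' from Theorem \ref{Bergman2} (with Remark \ref{mix-normal} for mixing) and Proposition \ref{integration-pro}(ii). Your explicit treatment of the two points the paper leaves implicit — that the $\asymp$ comparison is only asymptotic but suffices for these tail properties, and that $\sup_{n\geq 1}=\infty$ is equivalent to $\limsup_{n\to\infty}=\infty$ for a sequence of finite nonnegative terms — merely fills in the details.
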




We provide one more corollary to Theorem \ref{Bergman2}. This is a generalization of a result of Gethner and Shapiro, mentioned in Section $1$.

\begin{corollary}\label{normal-mixing}
    Suppose $\phi(z)$ is a normal weight. Then the unweighted shift $B$ is mixing on $A^p_{\phi}$, where $1\leq p<\infty$.
\end{corollary}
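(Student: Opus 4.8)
The plan is to reduce the assertion to the mixing characterization already in hand. Since $B$ is the weighted shift with weight sequence $w_n\equiv 1$, we have $|w_1\cdots w_n|=1$ for every $n$, so Theorem \ref{Bergman2} together with Remark \ref{mix-normal} (equivalently, the mixing clause of Theorem \ref{bsaf}, whose hypotheses (E), (P), (C) hold for $A^p_{\phi}$ by Proposition \ref{Bergman1} and Proposition \ref{P}) tells us that $B$ is mixing on $A^p_{\phi}$ exactly when
\[
\lim_{n\rightarrow\infty}\Big(\int_0^1 r^{pn+1}\phi(r)\,dr\Big)^{-1/p}=\infty,
\qquad\text{i.e.}\qquad
\lim_{n\rightarrow\infty}\int_0^1 r^{pn+1}\phi(r)\,dr=0.
\]
Thus everything comes down to verifying this last limit for an arbitrary normal weight $\phi$.

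Before invoking the characterization I would first record that $B$ is genuinely bounded on $A^p_{\phi}$. A normal weight is integrable: the defining condition \eqref{condition9} forces $\phi(r)\le C(1-r)^{\epsilon}$ near $r=1$ for some $\epsilon>-1$, and $(1-r)^{\epsilon}$ is integrable on $[0,1)$ precisely when $\epsilon>-1$. Boundedness of $B$ then follows immediately from Proposition \ref{bounded1}, so the equivalences of Theorem \ref{Bergman2} are applicable.

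For the limit itself the natural tool is dominated convergence. For each fixed $r\in[0,1)$ one has $r^{pn+1}\rightarrow 0$ as $n\rightarrow\infty$, hence $r^{pn+1}\phi(r)\rightarrow 0$ pointwise a.e.\ on $(0,1)$. Moreover, since $0\le r\le 1$ gives $r^{pn+1}\le r$ for all $n\ge 0$, the integrands are dominated by the single function $r\,\phi(r)$, which is integrable on $(0,1)$ because $\phi$ is an integrable weight (recall $\|z^0\|_{A^p_{\phi}}^{p}=2\int_0^1 r\,\phi(r)\,dr<\infty$ from the computation in the proof of Proposition \ref{Bergman1}). The dominated convergence theorem then yields $\int_0^1 r^{pn+1}\phi(r)\,dr\rightarrow 0$, which is precisely the mixing condition of Theorem \ref{Bergman2} with $w_n\equiv 1$, and the proof is complete.

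This argument is essentially routine and there is no substantial obstacle; the only point deserving care is exhibiting a legitimate dominating function valid for all $n$ simultaneously, and $r\,\phi(r)$ serves because the exponent $pn+1\ge 1$ makes $r^{pn+1}\le r$ for every $n$. I note in passing that only the integrability of $\phi$ was used in this step, so the conclusion in fact holds for every integrable radial weight, not merely normal ones; the mixing direction also follows directly from the Gethner--Shapiro criterion applied to the monomials, using $\|z^n\|_{A^p_{\phi}}\rightarrow 0$.
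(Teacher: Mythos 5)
Your proof is correct, and the reduction step is the same as the paper's: via Theorem \ref{Bergman2} and Remark \ref{mix-normal} (with $w_n\equiv 1$), everything comes down to showing $\int_0^1 r^{pn+1}\phi(r)\,dr\to 0$. Where you genuinely differ is in how this limit is verified. The paper splits the integral at some $R\in(0,1)$: on $(R,1)$ it invokes the normality hypothesis to bound $\phi(r)\le M(1-r)^{\epsilon}$ with $\epsilon>-1$ and then appeals to the beta-function asymptotics of Proposition \ref{integration-pro}, while on $[0,R]$ it uses $r^{pn+1}\le R^{pn+1}$ together with integrability of $\phi$. You instead apply dominated convergence once, with dominating function $r\,\phi(r)$, whose integrability is exactly the integrability of the weight on $\mathbb{D}$; normality enters your argument only to guarantee that integrability (and hence, via Proposition \ref{bounded1}, the boundedness of $B$, a hypothesis of Theorem \ref{Bergman2} that the paper leaves implicit but you rightly check). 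Your route is softer and strictly more general: as you note, it shows $B$ is mixing on $A^p_{\phi}$ for \emph{every} integrable radial weight, not only normal ones, with the sufficiency available directly from the Gethner--Shapiro criterion with $n_k=k$ once $\|z^n\|_{A^p_{\phi}}\to 0$ is known. What the paper's computation buys in exchange is quantitative information: the normality bound yields a decay rate of order $n^{-\epsilon-1}$ for the tail of the integral, in line with the asymptotics used elsewhere in Section 4, whereas dominated convergence gives no rate. Both arguments are complete proofs of the corollary.
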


\begin{proof}
    Since $\phi$ is normal, there are constants $M>0$, $0<R<1$, and $\epsilon>-1$ such that 
$\phi(r)\leq M (1-r)^{\epsilon},$ for $r\in (R,1)$, and hence, we get
    \[
    \int_{R}^1r^{pn+1} \phi(r)dr\leq M\int_{R}^{1}r^{pn+1}(1-r)^{\epsilon}dr.
    \]
    The right side integral can be expressed in beta functions, and it will converge to $0$, as $n\rightarrow \infty$, in view of Proposition \ref{integration-pro}. Also, since $\phi$ is integrable, we see that 
    \[
    \int_{0}^Rr^{pn+1} \phi(r)dr \leq R^{pn+1}\int_0^1\phi(r)dr \rightarrow 0,
    \]
    as $n\rightarrow \infty$. Remark \ref{mix-normal} yields that $B$ is mixing.
    \end{proof}

\begin{remark}
As we mentioned earlier, Theorem \ref{bsaf} is applicable to several Banach spaces of analytic functions. We provide here, the class of the Hardy space $H^p$ consisting of all analytic functions $f(z)$ on the unit disc $\mathbb{D}$ such that $\|f\|^p:=\sup_{0<r<1}\int_0^{2\pi}|f(re^{i\theta})|^p\frac{d\theta}{2\pi}<\infty,$ $1\leq p<\infty$. Evaluation functionals are continuous, and the polynomials are dense, cf. \cite{Zhu1}. Since the inequality (C) of Theorem \ref{bsaf} holds in $H^p$, it follows that $B_w$ is hypercyclic on $H^p$ if and only if $\limsup_{n\geq 1}|w_1\cdots w_n|=\infty.$
\end{remark}

\begin{remark}\label{non-int}
   If we take the weight $\phi(r) = (1-r)^{-1}\left(\log\frac{e}{1-r}\right)^{-(1+\alpha)},$ $\alpha>0,$ then the corresponding Bergman space $A^{p,\alpha}_{\log}$ lies strictly between the Hardy space $H^{p}$ and the classical Bergman space $A^{p}$, for $1\le p<\infty$; see, for example, \cite{Zhan}, p.~3. The operator $B$ is mixing on $A^{p,\alpha}_{\log}$, which can be concluded from the equivalence
\[
\|z^{n}\|^{p}_{A^{p,\alpha}_{\log}}:= \int_{0}^{1} r^{pn+1}(1-r)^{-1}\Big(\log\frac{e}{1-r}\Big)^{-(\alpha+1)}\,dr
    \asymp (\log(n+1))^{-\alpha},\qquad n\to\infty.
\]
Note that this weight $\phi(r)$ is not integrable.
\end{remark}

 We now switch towards necessary and sufficient conditions for $B_w$ to be chaotic. Our emphasis is on the chaos of $B_w$ on the standard Bergman space $A^p_{\phi}$ for normal weights. In particular, we will show that $B$ is chaotic on $A^p_{\alpha}$ precisely when $p<2+\alpha$. We will need the following result.

\begin{proposition}[\cite{Zhu}, p. 8]\label{int} For $p>1$, as $|\lambda| \to 1^{-},$
\[
\int_{\mathbb{D}}\left|\frac{1}{1-\lambda z}\right|^{p}(1-|z|^{2})^{\alpha}\, dA(z) \asymp
\begin{cases}
  1,  & \text{if $2+\alpha>p$}, \\
 \log\frac{1}{1-|\lambda|^{2}}, & \text{if $2+\alpha=p$}, \\
   \frac{1}{(1-|\lambda|^{2})^{p-2-\alpha}}, & \text{if $2+\alpha<p$}.
\end{cases}
\]
\end{proposition}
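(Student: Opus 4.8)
The plan is to reduce the two–dimensional integral to a single radial integral whose boundary asymptotics can be read off by an elementary scaling substitution, thereby separating the three cases. First I would exploit the rotation invariance of the measure $dA$ and of the factor $(1-|z|^2)^{\alpha}$: replacing $z$ by $e^{i\arg\lambda}z$ shows that the integral depends on $\lambda$ only through $t:=|\lambda|$, so it suffices to analyse, as $t\to 1^-$, the integral with $\lambda=t\in[0,1)$. Passing to polar coordinates $z=\rho e^{i\theta}$ gives
\[
\int_{\mathbb{D}}\frac{(1-|z|^{2})^{\alpha}}{|1-tz|^{p}}\,dA(z)
=\frac{1}{\pi}\int_{0}^{1}(1-\rho^{2})^{\alpha}\rho
\left(\int_{0}^{2\pi}\frac{d\theta}{|1-t\rho\,e^{i\theta}|^{p}}\right)d\rho .
\]

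Next I would dispose of the angular integral using the classical one–variable estimate that, for $p>1$ and $0\le x<1$,
\[
\int_{0}^{2\pi}\frac{d\theta}{|1-x\,e^{i\theta}|^{p}}\asymp\frac{1}{(1-x)^{p-1}},
\qquad x\to 1^{-}.
\]
This is proved by splitting $[0,2\pi)$ according to whether $|\theta|\lesssim 1-x$ or not and comparing $|1-x e^{i\theta}|$ with $(1-x)+|\theta|$. Applying it with $x=t\rho$, and using $(1-\rho^{2})^{\alpha}\asymp(1-\rho)^{\alpha}$ together with $\rho\asymp 1$ near $\rho=1$ (the part of the $\rho$–integral over $[0,\tfrac12]$ stays bounded and only contributes an $O(1)$ term), the whole problem reduces to the asymptotics of
\[
J(t):=\int_{0}^{1}\frac{(1-\rho)^{\alpha}}{(1-t\rho)^{p-1}}\,d\rho,
\qquad t\to 1^{-}.
\]

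The heart of the argument is the analysis of $J(t)$. Writing $u=1-\rho$ gives $1-t\rho=(1-t)+tu$, so with $\delta:=1-t$ and using $t\asymp 1$ one has $J(t)\asymp\int_{0}^{1}u^{\alpha}(\delta+u)^{-(p-1)}\,du$. The substitution $u=\delta v$ then yields
\[
\int_{0}^{1}\frac{u^{\alpha}}{(\delta+u)^{p-1}}\,du
=\delta^{\alpha+2-p}\int_{0}^{1/\delta}\frac{v^{\alpha}}{(1+v)^{p-1}}\,dv,
\]
and the three cases of the proposition fall out from the behaviour of the last integral as $\delta\to 0^{+}$, where the integrand decays like $v^{\alpha-p+1}$ at infinity. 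If $2+\alpha>p$ the upper–limit growth of order $\delta^{-(\alpha+2-p)}$ cancels the prefactor, giving $J(t)\asymp 1$; if $2+\alpha=p$ the integrand behaves like $1/v$, so the integral grows like $\log(1/\delta)$ while the prefactor is $\delta^{0}=1$, giving $J(t)\asymp\log\frac{1}{1-t}$; and if $2+\alpha<p$ the integral converges to a finite constant and $J(t)\asymp\delta^{\alpha+2-p}=(1-t)^{-(p-\alpha-2)}$. Finally, since $1-t^{2}\asymp 1-t$, one replaces $1-t$ by $1-|\lambda|^{2}$ throughout to obtain the stated form.

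The only delicate point, and the step I expect to be the main obstacle, is justifying that the $\asymp$ relations hold uniformly as $t\to 1^{-}$; concretely, that in each regime the factor $\delta^{\alpha+2-p}$ balances the growth (or convergence) of $\int_{0}^{1/\delta}v^{\alpha}(1+v)^{-(p-1)}\,dv$ with constants independent of $\delta$. Once the scaling substitution $u=\delta v$ is in place this is routine, and it is precisely this step that cleanly separates the borderline case $2+\alpha=p$ (logarithmic) from the two power-law regimes.
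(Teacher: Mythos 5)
Your proof is correct (modulo the standing assumption $\alpha>-1$, which is implicit in the paper, since otherwise the integral is infinite for every $\lambda$), but note that the paper itself offers no proof of this proposition: it is quoted from Hedenmalm--Korenblum--Zhu, \emph{Theory of Bergman Spaces}, p.~8, where the standard argument is quite different from yours. There one writes $|1-\lambda z|^{-p}=\bigl|(1-\lambda z)^{-p/2}\bigr|^{2}$, expands $(1-\lambda z)^{-p/2}=\sum_{n\ge 0}\frac{\Gamma(n+p/2)}{n!\,\Gamma(p/2)}(\lambda z)^{n}$, integrates term by term using orthogonality of the monomials against the radial weight, and reads off the three regimes from the Stirling asymptotics of the resulting coefficients, namely $\frac{\Gamma(n+p/2)^{2}}{(n!)^{2}}\cdot\frac{n!\,\Gamma(\alpha+1)}{\Gamma(n+\alpha+2)}\asymp n^{\,p-\alpha-3}$, together with the elementary behaviour of $\sum_{n}n^{\,p-\alpha-3}|\lambda|^{2n}$ as $|\lambda|\to 1^{-}$. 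Your route --- rotation invariance, polar coordinates, the angular estimate $\int_{0}^{2\pi}|1-xe^{i\theta}|^{-p}\,d\theta\asymp(1-x)^{1-p}$, and the scaling substitution $u=\delta v$ --- is a genuinely different, purely real-variable argument: it avoids power series and Gamma-function asymptotics entirely, and it generalizes more readily to non-standard weights, at the cost of having to track uniformity of the comparison constants. On that last point, one sharpening you should make explicit: you apply the angular estimate with $x=t\rho$, where $x$ ranges over all of $[0,t]$, not merely $x$ near $1$, so you need it as a uniform two-sided bound on $[0,1)$ with constants depending only on $p$; this does follow from the comparison $|1-xe^{i\theta}|\asymp(1-x)+|\theta|$, which is valid for all $x\in[0,1)$ and $|\theta|\le\pi$, and your sketch of the splitting argument is exactly how one proves it. With that stated, your case analysis of $\delta^{\alpha+2-p}\int_{0}^{1/\delta}v^{\alpha}(1+v)^{1-p}\,dv$ (cancellation of the prefactor when $2+\alpha>p$, logarithmic growth when $2+\alpha=p$, convergence when $2+\alpha<p$) is exactly right, and the final passage from $1-t$ to $1-|\lambda|^{2}$ is immediate.
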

Our results on chaos are presented in the following theorem.

\begin{theorem}\label{T}
    For $1\leq p<\infty,$ the following hold, assuming that $B_w$ is bounded.
 \begin{itemize}
     \item[(i)] If $B_{w}$ has a non-zero periodic vector in $A^p_{\phi}$ for a radial weight $\phi$, then 
     \[
     \inf_{n \geq 1} 
\Big(|w_{1}w_{2}\cdots w_{n}|\left(\!\int_{0}^{1} r^{pn+1}\phi(r) dr\!\right)^{-\frac{1}{p}}\Big)>0.
     \]
     The later condition for standard weights becomes 
     \[
     \inf_{n\geq 1}\left(|w_{1}w_{2}\cdots w_{n}|(n+1)^{\frac{\alpha+1}{p}}\right)>0.
     \]
     \item[(ii)] If $1<p<2+\alpha$ and 
     \[
     \sum_{n=1}^{\infty}\frac{1}{|w_{1}w_{2}\cdots w_{n}|n^{\frac{2+\alpha-p}{p}}}<\infty,
     \] 
     then $B_{w}$ is chaotic on $A^{p}_{\alpha}.$
     \item[(iii)] The unweighted shift $B$ is chaotic on $A^p_{\alpha}$ if and only if $p<2+\alpha$.
 \end{itemize}  
\end{theorem}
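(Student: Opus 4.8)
For part (i), suppose $u\neq 0$ satisfies $B_w^k u=u$. Writing $u=\sum_n\hat u(n)z^n$ and comparing coefficients, the identity $B_w^k u=u$ is equivalent to $w_{n+1}\cdots w_{n+k}\,\hat u(n+k)=\hat u(n)$ for all $n$; since every $w_j\neq 0$, the coefficients of $u$ in a fixed residue class modulo $k$ are either all zero or all non-zero, and at least one class $r$ is non-zero. Iterating along that class gives $|\hat u(r+mk)|=|\hat u(r)|/|w_{r+1}\cdots w_{r+mk}|$, and inserting this into the coefficient estimate $|\hat u(n)|\le C\|u\|/\|z^n\|_{A^p_{\phi}}$ of Proposition \ref{Bergman1} yields $|w_1\cdots w_n|/\|z^n\|_{A^p_{\phi}}\gtrsim 1$ for all $n\equiv r\ (\mathrm{mod}\ k)$. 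To pass from one residue class to all $n$, I would run the same argument on the whole orbit: each $v_j:=B_w^j u$ ($0\le j\le k-1$) is again a non-zero periodic vector, and since $\hat v_j(n)=w_{n+1}\cdots w_{n+j}\,\hat u(n+j)$ its non-zero class is $r-j\ (\mathrm{mod}\ k)$, so as $j$ ranges over $0,\dots,k-1$ these classes exhaust $\mathbb Z/k\mathbb Z$. Taking the minimum of the finitely many resulting constants (the finitely many small $n$ contribute strictly positive terms) gives $\inf_n\big(|w_1\cdots w_n|\,\|z^n\|_{A^p_{\phi}}^{-1}\big)>0$, which is the asserted inequality since $\|z^n\|_{A^p_{\phi}}^p=2\int_0^1 r^{pn+1}\phi(r)\,dr$. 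The specialization to standard weights follows from $\|z^n\|_{A^p_{\alpha}}\asymp(n+1)^{-(\alpha+1)/p}$ (Proposition \ref{integration-pro}).

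For part (ii), I would verify the Chaoticity Criterion (Theorem \ref{chaos}) with $X_0$ the dense space of polynomials and $S$ the forward weighted shift $S(z^j)=z^{j+1}/w_{j+1}$, so that $B_w S=\mathrm{id}$ on $X_0$. Since $B_w^n z^j=0$ for $n>j$, the series $\sum_n B_w^n z^j$ is a finite sum, hence unconditionally convergent, and it remains to show that $\sum_n S^n z^j$ converges unconditionally, for which absolute convergence $\sum_n\|S^n z^j\|_{A^p_{\alpha}}<\infty$ is enough. Using $S^n z^j=z^{j+n}/(w_{j+1}\cdots w_{j+n})$, the asymptotic $\|z^m\|_{A^p_{\alpha}}\asymp(m+1)^{-(\alpha+1)/p}$, and writing $m=j+n$, this sum is comparable to $|w_1\cdots w_j|\sum_{m>j}m^{-(\alpha+1)/p}/|w_1\cdots w_m|$. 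Because $p>1$ gives $(\alpha+1)/p\ge(2+\alpha-p)/p$, we have $m^{-(\alpha+1)/p}\le m^{-(2+\alpha-p)/p}$, so the hypothesis $\sum_m\big(|w_1\cdots w_m|\,m^{(2+\alpha-p)/p}\big)^{-1}<\infty$ furnishes the convergence. The criterion then yields that $B_w$ is chaotic (and mixing) on $A^p_{\alpha}$.

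For part (iii), hypercyclicity of $B$ on $A^p_{\alpha}$ is automatic (Corollary \ref{Hyper}, as $\sup_n(n+1)^{(\alpha+1)/p}=\infty$), so chaos is equivalent to density of the periodic vectors. For the forward implication $p<2+\alpha$, note that for $0\le j<N$ the function $z^j/(1-z^N)=\sum_{m\ge 0}z^{j+mN}$ is periodic under $B$ (its coefficient sequence is $N$-periodic) and lies in $A^p_{\alpha}$, since $1/(1-z^N)=\tfrac1N\sum_{\zeta^N=1}(1-\bar\zeta z)^{-1}$ is a finite combination of functions belonging to $A^p_{\alpha}$ by Proposition \ref{int}. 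I would then show these approximate the monomials: as $z^j/(1-z^N)-z^j=z^{j+N}/(1-z^N)$, the plan is to prove $\|z^{j+N}/(1-z^N)\|_{A^p_{\alpha}}\to 0$ as $N\to\infty$. Every monomial is then a limit of periodic vectors, and since polynomials are dense (Proposition \ref{P}), the periodic vectors are dense and $B$ is chaotic. For the reverse implication, if $B^k u=u$ with $u\in A^p_{\alpha}$ then the coefficients of $u$ are $k$-periodic, so $u=P(z)/(1-z^k)$ with $\deg P<k$; if $u\neq 0$ then $P$ does not vanish at some $k$-th root of unity $\zeta_0$, so $u$ has a genuine simple pole at $\zeta_0\in\partial\mathbb D$ with $|u(z)|\gtrsim|z-\zeta_0|^{-1}$ nearby. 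Since $|1-\bar\zeta_0 z|=|z-\zeta_0|$, Proposition \ref{int} gives $\int_{\mathbb D}|z-\zeta_0|^{-p}(1-|z|^2)^{\alpha}\,dA=\infty$ whenever $p\ge 2+\alpha$, contradicting $u\in A^p_{\alpha}$; hence the only periodic vector is $0$, which is not dense, and $B$ is not chaotic.

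The crux of the whole theorem is the threshold estimate in part (iii), where the number $2+\alpha$ is decided, and this is the step I expect to be the main obstacle. After integrating out the angle via the standard bound $\int_0^{2\pi}|1-se^{i\psi}|^{-p}\,d\psi\asymp(1-s)^{1-p}$ (the one-dimensional analogue underlying Proposition \ref{int}) with $s=r^N$, the decay $\|z^{j+N}/(1-z^N)\|_{A^p_{\alpha}}\to 0$ reduces to controlling $\int_0^1 r^{(j+N)p+1}(1-r^N)^{1-p}(1-r^2)^{\alpha}\,dr$; the substitution $u=N(1-r)$ together with dominated convergence shows this decays like $N^{-(\alpha+1)}$, the limiting $u$-integral $\int_0^\infty u^{\alpha+1-p}e^{-pu}\,du$ being convergent at the origin \emph{exactly} when $p<2+\alpha$. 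The same exponent governs the divergence in the reverse direction, so this one calculation pins down the sharp cutoff $p=2+\alpha$ (the case $p=1$ is analogous, with a logarithmic factor replacing $(1-s)^{1-p}$).
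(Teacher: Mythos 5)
Your proposal is correct, and in parts (ii) and (iii) it takes a genuinely different route from the paper. In (i) your argument is essentially the paper's (compare Taylor coefficients across $B_w^{km}u=u$, then feed the resulting identity into the coefficient bound $|\hat u(n)|\le C\|u\|/\|z^n\|_{A^p_\phi}$ of Proposition \ref{Bergman1}); your extra device of running the argument over the whole orbit $B_w^ju$, $0\le j<k$, so as to exhaust every residue class mod $k$, addresses a point the paper glosses over (it passes from the progression $\{j+km\}$ to all $n$ without comment), so your version is if anything more complete. In (ii) both you and the paper verify the Chaoticity Criterion with the same dense set $X_0$ and right inverse $S$, but you get unconditional convergence of $\sum_n S^nf$ from absolute convergence in norm, using $\|z^m\|_{A^p_\alpha}\asymp(m+1)^{-(\alpha+1)/p}$ and the inequality $(\alpha+1)/p\ge(2+\alpha-p)/p$, whereas the paper identifies the dual $(A^p_\alpha)^*\cong A^q_\alpha$, bounds $|L(z^n)|\lesssim n^{-(2+\alpha-p)/p}$ via the pointwise estimate of Proposition \ref{Berg}, and invokes the Orlicz--Pettis theorem. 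Your route is more elementary and proves slightly more: it needs neither $p>1$ nor $p<2+\alpha$, and it would allow the hypothesis to be relaxed to $\sum_n 1/(|w_1\cdots w_n|\,n^{(\alpha+1)/p})<\infty$. In (iii), for density of periodic vectors when $p<2+\alpha$ the paper shows that the span of the eigenvectors $f_\lambda(z)=1/(1-\lambda z)$, $\lambda$ a root of unity, is dense, via an annihilator argument and uniqueness of Fourier coefficients; you instead approximate each monomial by the explicit periodic vectors $z^j/(1-z^N)$ and verify $\|z^{j+N}/(1-z^N)\|_{A^p_\alpha}\to0$ by a direct asymptotic computation. I checked that computation: the substitution $u=N(1-r)$ with the angular bound gives decay of order $N^{-(\alpha+1)/p}$, and the limiting integral converges near the origin exactly when $p<2+\alpha$, so your threshold analysis is sound; this route is more computational but bypasses the dual-space identification and the (somewhat delicate) continuity claim for $\sum_n M_ne^{in\theta}$ on which the paper's density argument rests. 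For the converse direction of (iii) you argue that a non-zero $k$-periodic vector equals $P(z)/(1-z^k)$, hence has a genuine pole at some root of unity and cannot lie in $A^p_\alpha$ when $p\ge2+\alpha$; the paper's written proof leaves this necessity essentially implicit (it only verifies density when $p<2+\alpha$), so here you supply a step the paper omits.

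One small repair is needed: Proposition \ref{int} is stated for $\lambda$ in the open disc, with asymptotics as $|\lambda|\to1^-$, so it does not literally yield $\int_{\mathbb{D}}|z-\zeta_0|^{-p}(1-|z|^2)^\alpha\,dA(z)=\infty$ for a boundary point $\zeta_0$ (a limiting argument via Fatou's lemma goes the wrong way). Justify the divergence directly, e.g.\ by the same local computation you use elsewhere: near $\zeta_0$ one has $|z-\zeta_0|\asymp(1-|z|)+|\arg(z\bar\zeta_0)|$, and integrating $\big((1-r)+|\theta|\big)^{-p}(1-r)^\alpha$ over a half-neighborhood produces $\int_0 (1-r)^{\alpha+1-p}\,dr$, which diverges precisely when $p\ge2+\alpha$. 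This is a one-line fix, not a structural gap.
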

\begin{proof}
 (i) Assume that $f(z)=\sum_{n=0}^{\infty}\lambda_{n}z^{n}$ be a non-zero periodic vector for $B_{w}$ on $A^{p}_{\phi}.$ Then for some $m\in \mathbb{N},$ we have $B_{w}^{m}f(z)=f(z),$ for all $z\in \mathbb{D}.$ Since $B_{w}^{km}f(z)=f(z)$ for all $k\geq 1$, it follows that 
 \begin{eqnarray*}
\sum_{n=km}^{\infty} w_{n}w_{n-1}\cdots w_{n-km+1}\lambda_{n}z^{n-km}=\sum_{n=0}^{\infty} \lambda_{n}z^{n},
\end{eqnarray*}
for all $z\in \mathbb{D}$. As $m$ be the period of $B_{w}$, then there is some $j\leq m$ such that $\lambda_{j}\neq 0.$ Using the coefficients of like powers for comparison, we obtain 
$w_{t+1}w_{t+2}\cdots w_{t+km}\lambda_{t+km}=\lambda_{t}$, $\forall~~ k \geq 1,$ where $ t \geq j$. This implies that
\begin{equation}\label{new1}
    \frac{\lambda_{t}}{w_{t+1}w_{t+2}\cdots w_{t+km}}=\lambda_{t+km}.
\end{equation}
Using the Cauchy integral formula for $f \in A^{p}_{\phi}$, there exists a constant $C>0$ such that 
\[
|\lambda_{t+km}| = \frac{|f^{(t+km)}(0)|}{(t+km)!}
\leq \frac{C}{r^{t+km}} \left( \int_{-\pi}^{\pi} |f(re^{i\theta})|^{p} d\theta \right)^{1/p},
\quad 0 < r < 1.
\]
Multiplying both sides by $r^{p(t+km)+1}\phi(r)$ and integrating over $[0,1]$ yields
\[
\left(\!\int_{0}^{1} r^{p(t+km)+1}\phi(r) dr\!\right)
\left(\frac{|f^{(t+km)}(0)|}{(t+km)!}\right)^{p}
\leq C \|f\|_{A^{p}_{\phi}}^{p}.
\]
Therefore
\begin{equation}\label{new2}
|\lambda_{t+km}| \leq \frac{C_{p}}{\left(\!\int_{0}^{1} r^{p(t+km)+1}\phi(r) dr\!\right)^{\frac{1}{p}}}\|f\|_{A^{p}_{\phi}}.
\end{equation}
Combining (\ref{new1}) and (\ref{new2}) gives
\[
\left| \frac{\lambda_{t}}{w_{t+1}w_{t+2}\cdots w_{t+km}} \right|
\leq \frac{C_{p}}{\left(\!\int_{0}^{1} r^{p(t+km)+1}\phi(r) dr\!\right)^{\frac{1}{p}}}\|f\|_{A^{p}_{\phi}},
\quad t \geq j,\; k \geq 1.
\]
Consequently,
\[
\sup_{n \geq j} 
\frac{\left(\!\int_{0}^{1} r^{pn+1}\phi(r) dr\!\right)^{\frac{1}{p}}}{|w_{j}w_{j+1}\cdots w_{n}|}
< \infty.
\]
This completes the proof of the first part of \textnormal{(i)}. 
For the case of the standard weight, we have 
$\int_{0}^{1} r^{pn+1}\phi(r)\,dr 
    \sim \frac{1}{(n+1)^{\alpha+1}},$ which establishes the assertion in \textnormal{(i)}.

(ii) To show that $B_{w}$ is chaotic on $A^{p}_{\alpha},$ we use the chaoticity criterion. As in the previous theorem, consider $X_0$ be the space of all polynomials and $S: X_{0}\to X_{0}$ which is given by
      \[
     S(z^{j})= \frac{z^{j+1}}{w_{j+1}},~\hspace{.0cm} j\geq 0. 
     \]
Observing that  $B_{w}S(f)=f$ and the series $\sum_{n=0}^{\infty} (B_{w})^{n}(f)$ converges unconditionally for each $f \in X_0.$ It remains to show that the series  $\sum_{n=0}^{\infty} S^{n}(f)$ converges unconditionally, for each $f \in X_0.$ We prove that
     $\sum_{n=1}^{\infty} \frac{1}{w_{1}\cdots w_{n}} z^n $
     is unconditionally convergent in $A^{p}_{\alpha}$. By the well known Orlicz-Pettis theorem (cf. Diestel \cite{Diestel}) it suffices to show that
        \[
        \sum_{n=1}^{\infty}\left|\frac{1}{w_{1}\cdots w_{n}}L(z^{n})\right|<\infty,
        \]
        for every bounded linear functional $L$  on $A^{p}_{\alpha}.$ Since the dual of $A^p_{\alpha}$ can be identified with $A^q_{\alpha}$ under the pairing
\[
\langle f, h \rangle := (\alpha+1)\int_{\mathbb{D}} f(z) \, h(z) \, (1-|z|^2)^\alpha \, dA(z), 
\quad f \in A^p_\alpha, \ h \in A^q_\alpha, 
\]
$1/p+1/q=1$, (cf. \cite{Zhu}, p. 18), we will prove that the series  
\[
\sum_{n\geq 1} \frac{1}{w_1\cdots w_n}\int_{\mathbb{D}} z^{n}h(z)(1-|z|^{2})^{\alpha}\, dA(z),
\] 
is absolutely convergent. Now, recalling the point wise estimate as in Proposition~\ref{Berg}, we have, for $p<2+\alpha$,  
\begin{eqnarray*}
\int_{\mathbb{D}} |z|^{n}|h(z)|(1-|z|^{2})^{\alpha}\, dA(z)&\leq& C_{p}\|h\|_{A^{q}_{\alpha}}\int_{0}^{1} r^{n+1}(1-r^{2})^{\alpha-\tfrac{2+\alpha}{q}}\,dr\\[6pt]
&=& C_{p}\|h\|_{A^{q}_{\alpha}}  \frac{\Gamma\!\left(\tfrac{n}{2}+1\right)\Gamma\!\left(\tfrac{2+\alpha-p}{p}\right)}{\Gamma\!\left(\tfrac{n}{2}+\tfrac{2+\alpha}{p}\right)}.
\end{eqnarray*}
Hence, by recalling Lemma~\ref{gamma}, we can deduce that  
\[
\big|L(z^{n})\big| \leq C_{p}\|h\|_{A^{q}_{\alpha}} \, \frac{1}{n^{\tfrac{2+\alpha-p}{p}}}.
\]
Therefore, under the given assumption, we may conclude that $B_{w}$ satisfies the chaoticity criterion, and hence (ii) follows.\\
(iii) First, note that by Corollary~\ref{normal-mixing}, the backward shift operator $B$ is mixing on $A^{p}_{\alpha}$. We now show that $B$ has a dense set of periodic vectors on $A^{p}_{\alpha}$ if and only if $p<2+\alpha$. For $\lambda \in \overline{\mathbb{D}}$ and $z \in \mathbb{D}$,
$
f_{\lambda}(z):=\sum_{n=0}^{\infty}\lambda^{n}z^{n},
$
is an eigenvector of $B$ corresponding to the eigenvalue $\lambda$, provided that $f_{\lambda}\in A^p_{\alpha}$. By Proposition \ref{int}, we have $f_{\lambda}\in A^{p}_{\alpha}\quad \text{if and only if } p< 2+\alpha$ whenever $|\lambda|=1$.  In particular, if $\lambda$ is a root of unity, then $f_{\lambda}$ is a non-trivial periodic vector of $B$. To show that the periodic vectors of $B$ are dense in $A^p_{\alpha}$, it is enough to show that the family
$\mathcal{F}:=\Big\{f_{\lambda}:~ \lambda \text{ a root of unity}\Big\}$ spans a dense subspace in $A^{p}_{\alpha}$. To this end, let $h \in A^{q}_{\alpha}$, with $\tfrac{1}{p}+\tfrac{1}{q}=1$, and
\begin{equation}\label{i}
\int_{\mathbb{D}} f_{\lambda}(z)\, h(z)\,(1-|z|^2)^\alpha\, dA(z) = 0.
\end{equation}
 Expanding $f_{\lambda}(z)$ as a power series, we get that $\sum_{n=0}^{\infty} \lambda^{n} M_{n} = 0,$ where
 \[
M_n := \int_{\mathbb{D}} z^n h(z)\,(1-|z|^2)^\alpha\, dA(z).
\]
 Note that $\sum_{n=0}^{\infty} M_ne^{in\theta}$ converges for all $\theta \in [-\pi,\pi)$, and it defines a continuous function. Also, since $\sum_{n=0}^{\infty} M_ne^{in\theta}$ vanishes when $\theta$ is a rational multiple of $\pi$, we get that 
 \[
 \sum_{n=0}^{\infty} M_{n} e^{in\theta}=0,~\forall~\theta\in[-\pi,\pi).
 \]
 By the uniqueness of Fourier coefficients, we must have $M_{n}=0$ for all $n\geq 0$. Since the polynomials are dense in $A^{p}_{\alpha}$, it follows that $h\equiv 0$. Consequently, the span of $\mathcal{F}$ is dense in $A^{p}_{\alpha}$, completing the proof of (iii).
\end{proof}

\subsection{Shifts on non-radial Bergman spaces}

We provide examples of non-radially weighted Bergman spaces $A^p_{\phi}$, and study the dynamics of $B_w$. The weight functions considered in this subsection are the non-radial weights of Section $2$. In the Hilbert space case of $A^2_{\phi},$ it is evident from the proofs of (ii) and (iii) in Theorem \ref{T} that we can have the following general results, since, in this case the dual space is known, and also, the polynomials are dense. 
\begin{proposition}
Let $\phi(z):=|g(z)|^{\gamma}$, where $g(z)$ and $\gamma$ are as in Corollary \ref{Herg}. If
\[
\sum_{n\geq 1}\frac{1}{|w_{1}\cdots w_{n}|}\int_{0}^{1}\int_{0}^{2\pi}r^{n+1}(1-r)^{-1}|g(re^{i\theta})|^{\frac{\gamma}{2}}drd\theta<\infty,
\]
 then $B_{w}$ is chaotic on $A^{2}_{\phi}.$
\end{proposition}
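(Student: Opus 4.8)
The plan is to invoke the Chaoticity Criterion (Theorem \ref{chaos}) with $X_0$ the space of polynomials, which is dense in $A^2_\phi$ by Corollary \ref{Herg}, and with the right inverse $S:X_0\to X_0$ defined by $S(z^j)=z^{j+1}/w_{j+1}$, exactly as in the proof of Theorem \ref{T}(ii). One has $B_wSf=f$ for every $f\in X_0$, and since $B_w^n f=0$ for $n$ large whenever $f$ is a polynomial, the series $\sum_{n\ge 0}B_w^n f$ is a finite sum and is trivially unconditionally convergent. Thus the whole matter reduces to showing that $\sum_{n\ge 0}S^n f$ converges unconditionally for each $f\in X_0$, and by linearity this amounts to the unconditional convergence of the single series $\sum_{n\ge 1}(w_1\cdots w_n)^{-1}z^n$ in $A^2_\phi$.

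To handle this series I would apply the Orlicz--Pettis theorem (cf. \cite{Diestel}), which reduces unconditional convergence to the scalar estimate $\sum_{n\ge1}|w_1\cdots w_n|^{-1}|L(z^n)|<\infty$ for every bounded linear functional $L$ on $A^2_\phi$. Here the Hilbert space structure is decisive: by the Riesz representation theorem every such $L$ has the form $L(f)=\int_{\mathbb{D}}f(z)\overline{h(z)}|g(z)|^{\gamma}\,dA(z)$ for some $h\in A^2_\phi$, so that $|L(z^n)|\le \int_{\mathbb{D}}|z|^n|h(z)||g(z)|^{\gamma}\,dA(z)$.

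The crux of the argument is to bound this last integral by the quantity appearing in the hypothesis. Inserting the pointwise estimate of Proposition \ref{Noon-radial2}(i) with $p=2$, namely $|h(z)|\le C_2(1-|z|)^{-1}|g(z)|^{-\gamma/2}\|h\|_{A^2_\phi}$, the weight collapses as $|g(z)|^{\gamma}|g(z)|^{-\gamma/2}=|g(z)|^{\gamma/2}$, giving
\[
|L(z^n)|\le C\|h\|_{A^2_\phi}\int_{\mathbb{D}}\frac{|z|^n\,|g(z)|^{\gamma/2}}{1-|z|}\,dA(z)
= C'\|h\|_{A^2_\phi}\int_0^1\!\!\int_0^{2\pi}\frac{r^{n+1}|g(re^{i\theta})|^{\gamma/2}}{1-r}\,d\theta\,dr,
\]
after passing to polar coordinates. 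Summing against $|w_1\cdots w_n|^{-1}$ reproduces exactly the series assumed finite in the statement, so the Orlicz--Pettis criterion is met and $\sum_{n\ge1}(w_1\cdots w_n)^{-1}z^n$ converges unconditionally; the Chaoticity Criterion then yields that $B_w$ is chaotic (and mixing) on $A^2_\phi$.

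I expect the only genuinely substantive step to be the identification of the integral produced by the pointwise bound with the hypothesis integral: one must check that the exponent bookkeeping $|g|^{\gamma}\cdot|g|^{-\gamma/2}=|g|^{\gamma/2}$ and the factor $(1-|z|)^{-1}$ line up precisely, and that the density of polynomials (Corollary \ref{Herg}) together with the identification of $(A^2_\phi)^*$ via the Riesz representation are legitimately available in this non-radial setting. Everything else is routine once these are in place, and the boundedness of $B_w$ needed to apply the criterion is taken for granted, as in Theorem \ref{T}.
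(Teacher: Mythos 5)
Your proposal is correct and follows essentially the same route as the paper's proof: the Chaoticity Criterion applied to the dense set of polynomials with the right inverse $S(z^j)=z^{j+1}/w_{j+1}$, reduction via Orlicz--Pettis and the Riesz representation of $(A^2_\phi)^*$, and the pointwise bound of Proposition \ref{Noon-radial2}(i) with $p=2$, under which the weight collapses as $|g|^{\gamma}\cdot|g|^{-\gamma/2}=|g|^{\gamma/2}$ and polar coordinates produce exactly the hypothesis integral. The paper merely compresses the setup by citing the proof of Theorem \ref{T}(ii), so your more explicit write-up matches it step for step.
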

\begin{proof}
    We only need to show that, for every $h \in A^2_{\phi}$, the series  
\[
\sum_{n\geq 1} \frac{1}{w_1\cdots w_n}\int_{\mathbb{D}} z^{n}\overline{h(z)}\,|g(z)|^{\gamma}\, dA(z)
\]
is absolutely convergent. Recalling the point wise estimate from Proposition~\ref{Noon-radial2}, we obtain  
\[
\int_{\mathbb{D}} |z|^{n}\,|h(z)|\,|g(z)|^{\gamma}\, dA(z)
\leq
\int_{0}^{1}\int_{0}^{2\pi}
r^{\,n+1} (1-r)^{-1}
\,|g(re^{i\theta})|^{\frac{\gamma}{2}}
\, d\theta\, dr.
\]
Using this estimate together with our assumption, we conclude that $B_w$ is chaotic on $A^2_{\phi}$.
\end{proof}
\begin{proposition}
    Let $\phi(z)$ be a weight (not necessarily radial), and let $\frac{1}{1-\lambda z}\in A^2_{\phi}$, for all unimodular $\lambda$. Then, $B$ is chaotic and mixing.
\end{proposition}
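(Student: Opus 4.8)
The plan is to establish mixing and the density of the periodic vectors separately and then combine them. Throughout write $f_{\lambda}(z)=\frac{1}{1-\lambda z}=\sum_{n\geq0}\lambda^{n}z^{n}$, so that $Bf_{\lambda}=\lambda f_{\lambda}$; by hypothesis $f_{\lambda}\in A^{2}_{\phi}$ whenever $|\lambda|=1$, and trivially when $|\lambda|<1$, since then $f_{\lambda}$ is bounded on $\overline{\mathbb{D}}$. First I would dispose of mixing, which does not even use the hypothesis. Since $\phi$ is integrable, $B$ is continuous on $A^{2}_{\phi}$ by Proposition \ref{bounded1}, and the polynomials form a dense set $X_{0}$ by Proposition \ref{P}. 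Taking $S:X_{0}\to X_{0}$ to be the forward shift $S(z^{j})=z^{j+1}$, one has $BSf=f$ on $X_{0}$; for a polynomial $f$ one has $B^{k}f=0$ for large $k$, while $S^{k}(z^{j})=z^{j+k}$ and $\|z^{j+k}\|_{A^{2}_{\phi}}^{2}=\int_{\mathbb{D}}|z|^{2(j+k)}\phi\,dA\to0$ by dominated convergence. The Gethner--Shapiro criterion (Theorem \ref{thm-hypc}) with $n_{k}=k$ then shows that $B$ is mixing, and in particular hypercyclic, on $A^{2}_{\phi}$.

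The real content is the density of the periodic vectors, where the hypothesis enters. If $\lambda$ is a root of unity with $\lambda^{N}=1$, then $B^{N}f_{\lambda}=f_{\lambda}$, so $f_{\lambda}\in A^{2}_{\phi}$ is periodic; it thus suffices to prove that $\mathcal{F}=\{f_{\lambda}:\lambda\text{ a root of unity}\}$ spans a dense subspace. As $A^{2}_{\phi}$ is a Hilbert space, I would take $h\in A^{2}_{\phi}$ with $\langle f_{\lambda},h\rangle=0$ for every $\lambda\in\mathcal{F}$ and show $h=0$. Set $M_{n}:=\langle z^{n},h\rangle$. A dominated-convergence argument shows that the Taylor series of $f_{\lambda}$ converges to $f_{\lambda}$ in $A^{2}_{\phi}$ for each unimodular $\lambda$ (the tail norm squared is $\int_{\mathbb{D}}\frac{|z|^{2(N+1)}}{|1-\lambda z|^{2}}\phi\,dA$, dominated by the integrable majorant $|f_{\lambda}|^{2}\phi$), and likewise that $\frac{z^{r}}{1-z^{N}}=\sum_{j\geq0}z^{r+jN}$ converges in $A^{2}_{\phi}$. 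The averaging identity $\frac{1}{N}\sum_{k=0}^{N-1}\omega^{-kr}f_{\omega^{k}}=\frac{z^{r}}{1-z^{N}}$, with $\omega=e^{2\pi i/N}$, exhibits $\frac{z^{r}}{1-z^{N}}$ as a finite combination of members of $\mathcal{F}$; pairing with $h$ yields $\sum_{j\geq0}M_{r+jN}=0$ for all $N\geq1$ and $0\leq r<N$.

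To finish I would isolate the leading term: $M_{r}=-\sum_{j\geq1}M_{r+jN}=-\langle\tfrac{z^{r+N}}{1-z^{N}},h\rangle$, whence $|M_{r}|\leq\|\tfrac{z^{N}}{1-z^{N}}\|_{A^{2}_{\phi}}\|h\|$ for every $N>r$. Thus, once $\|\tfrac{z^{N}}{1-z^{N}}\|_{A^{2}_{\phi}}\to0$ is known, each fixed $M_{r}$ vanishes, so $h\perp z^{n}$ for all $n$ and, by density of polynomials, $h=0$; together with the mixing of the first step this gives that $B$ is chaotic and mixing, as claimed. The main obstacle is precisely this norm decay. For a radial weight it is clean: there $\{z^{n}\}$ is orthogonal, so $\|\tfrac{z^{N}}{1-z^{N}}\|^{2}=\sum_{j\geq1}\|z^{jN}\|^{2}=2\int_{0}^{1}\frac{r^{2N+1}}{1-r^{2N}}\phi(r)\,dr$, which tends to $0$ by dominated convergence with the $N$-independent majorant $\frac{r^{3}\phi(r)}{1-r^{2}}$, whose integral is bounded by $\|f_{1}\|_{A^{2}_{\phi}}^{2}$ and hence finite by hypothesis. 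For a general non-radial weight I would reduce to the radial case by rotational averaging: writing $\|\tfrac{z^{N}}{1-z^{N}}\|_{A^{2}_{\phi}}^{2}=\sum_{i,j\geq1}\int_{\mathbb{D}}z^{iN}\overline{z^{jN}}\,\phi\,dA$, the diagonal terms $i=j$ reproduce the vanishing radial contribution above, while the off-diagonal terms involve only angular Fourier coefficients of $\phi$ of order a nonzero multiple of $N$, which are negligible for large $N$ by the Riemann--Lebesgue lemma.
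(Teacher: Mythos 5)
Your mixing step is correct ($\phi$ integrable gives $\|z^{j+k}\|_{A^2_\phi}\to 0$ by dominated convergence, and Gethner--Shapiro applies), and your reduction of chaos to the density of $\mathrm{span}\{f_\lambda:\lambda\ \text{a root of unity}\}$ is sound up to the last step: the dominated-convergence proof that the Taylor series of $f_\lambda$ converges in norm, the averaging identity $\frac{1}{N}\sum_{k=0}^{N-1}\omega^{-kr}f_{\omega^k}=\frac{z^r}{1-z^N}$, and the resulting relations $\sum_{j\ge 0}M_{r+jN}=0$ are all fine (indeed you justify interchanges that the paper glosses over). The paper proves this proposition by transplanting the argument of Theorem \ref{T} (ii)--(iii): there the annihilating functional $h$ yields a series $\sum_n M_n\lambda^n=\langle f_\lambda,h\rangle$ vanishing at all roots of unity, and one concludes $M_n\equiv 0$ via continuity on the circle and uniqueness of Fourier coefficients, then $h=0$ by density of polynomials. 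You instead isolate $M_r$ through the bound $|M_r|\le\|z^N/(1-z^N)\|_{A^2_\phi}\|h\|$, so your whole proof hinges on $\liminf_N\|z^N/(1-z^N)\|_{A^2_\phi}=0$. For radial $\phi$ your proof of this decay is complete and correct; but the proposition is precisely about non-radial weights, and there your argument has a genuine gap.

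The diagonal/off-diagonal sketch cannot work as stated. By Tonelli the diagonal sum is $\sum_{i\ge1}\|z^{iN}\|^2=\int_{\mathbb{D}}\frac{|z|^{2N}}{1-|z|^{2N}}\phi\,dA$, and since $\frac{t^{2N}}{1-t^{2N}}\asymp_N\frac{1}{1-t}$ on $[\tfrac12,1)$, its finiteness for even one $N$ is equivalent to $\int_{\mathbb{D}}\frac{\phi(z)}{1-|z|}\,dA(z)<\infty$. By Fubini and the identity $\int_0^{2\pi}|1-e^{it}z|^{-2}\,dt=2\pi/(1-|z|^2)$, that condition is equivalent to $\int_0^{2\pi}\|f_{e^{it}}\|^2_{A^2_\phi}\,dt<\infty$, i.e.\ to \emph{integrability} of $\lambda\mapsto\|f_\lambda\|^2$ over the circle, whereas the hypothesis only asserts \emph{finiteness at each point}, which is strictly weaker. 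Concretely, let $\phi=(1-|z|^2)+\sum_k\phi_k$, where $\phi_k$ is a bump of total mass $\epsilon_k/k$ supported in the disc of radius $\epsilon_k/2$ about $(1-\epsilon_k)e^{i\alpha_k}$, with $\epsilon_k=4^{-k}$ and the $\alpha_k$ chosen so that the arcs of half-length $\sqrt{k\epsilon_k}$ about the points $e^{i\alpha_k}$ are pairwise disjoint (possible since $\sum_k\sqrt{k\epsilon_k}<\infty$). For any unimodular $\lambda$, the $k$-th bump contributes $\lesssim \frac{\epsilon_k/k}{k\epsilon_k}=k^{-2}$ to $\|f_\lambda\|^2$ unless $\bar\lambda$ lies in the $k$-th arc, which happens for at most one $k$ and then contributes at most $C/(k\epsilon_k)<\infty$; hence every $f_\lambda\in A^2_\phi$. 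Yet $\int\frac{\phi}{1-|z|}\,dA\gtrsim\sum_k\frac{1}{k}=\infty$. For such a weight your diagonal term is $+\infty$ for every $N$, so the split is an $\infty-\infty$ decomposition (and the rearrangement of $\|\sum_i z^{iN}\|^2$ into a doubly indexed sum is itself unjustified without absolute convergence), while the Riemann--Lebesgue lemma, which gives only non-quantitative decay of individual Fourier coefficients, cannot control the infinitely many off-diagonal terms at frequencies $(i-j)N$. In short, in the non-radial case your route reduces the proposition to a quantitative norm estimate that the pointwise hypothesis cannot supply; to finish along the paper's lines you should abandon the norm-decay step and instead conclude from $\sum_n M_n\lambda^n=0$ ($\lambda$ any root of unity) via the continuity/Fourier-uniqueness argument of Theorem \ref{T}, which requires no bound on $\|z^N/(1-z^N)\|$.
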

\begin{proof}
    It is essentially the same as the proof of Theorem \ref{T} (ii).
\end{proof}

 \begin{proposition}
    For $1\leq p<\infty,$ let $A^{p}_{\phi}$ be a weighted Bergman space corresponding to the non-radial weight $\phi(z)=1-|\Re(z)|.$ Then the following hold.
    \begin{itemize}
        \item[(i)] The unweighted shift $B$ is bounded and mixing on $A^{p}_{\phi}.$
        \item[(ii)] Assume that the weighted backward shift $B_{w}$ is bounded on $A^{p}_{\phi}$. Then it is hypercyclic on $A^{p}_{\phi}$ if 
$\displaystyle \sup_{n\geq 1}\big( \lvert w_{1}\cdots w_{n} \rvert (n+1)^{\frac{1}{p}} \big)=\infty$. 
Conversely, if $B_{w}$ is hypercyclic on $A^{p}_{\phi}$, then 
$\displaystyle \sup_{n\geq 1}\big( \lvert w_{1}\cdots w_{n} \rvert (n+1)^{\frac{3}{p}} \big)=\infty$. 
    \end{itemize}
\end{proposition}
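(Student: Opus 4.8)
The plan is to reduce everything to one explicit computation of $\|z^n\|_{A^p_\phi}$ and then feed the resulting decay rate into the Gethner--Shapiro machinery already set up in Theorem~\ref{bsaf}. First I would compute, in polar coordinates $z=re^{i\theta}$,
\[
\|z^n\|_{A^p_\phi}^p=\frac{1}{\pi}\int_0^1 r^{np+1}\int_0^{2\pi}\bigl(1-r|\cos\theta|\bigr)\,d\theta\,dr,
\]
and use $\int_0^{2\pi}|\cos\theta|\,d\theta=4$ to obtain $\|z^n\|_{A^p_\phi}^p=\frac{2}{np+2}-\frac{4}{\pi(np+3)}$. Since $4/\pi<2$, this is asymptotic to $(2-4/\pi)/(np)$, so $\|z^n\|_{A^p_\phi}\asymp (n+1)^{-1/p}$ as $n\to\infty$. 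This single estimate drives both parts.

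For part (i), boundedness of $B$ is immediate from Proposition~\ref{bounded1}, as $\phi$ is integrable. For mixing I would apply the Gethner--Shapiro criterion with $n_k=k$, taking $X_0$ to be the polynomials (dense by the corollary following Proposition~\ref{P}) and the forward map $S(z^j)=z^{j+1}$. Then $BSf=f$ trivially, $B^kf=0$ for $k$ large for each polynomial $f$, and $\|S^k(z^j)\|_{A^p_\phi}=\|z^{j+k}\|_{A^p_\phi}\asymp(j+k+1)^{-1/p}\to 0$; this is exactly the $n_k=k$ hypothesis, yielding that $B$ is mixing.

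For the sufficiency in part (ii), I would note that the norm estimate converts the hypothesis into the $\liminf$ condition of Theorem~\ref{bsaf}: since $(n+1)^{1/p}\asymp 1/\|z^n\|_{A^p_\phi}$, the assumption $\sup_n|w_1\cdots w_n|(n+1)^{1/p}=\infty$ is equivalent to $\liminf_n \|z^n\|_{A^p_\phi}/|w_1\cdots w_n|=0$. The converse half of the proof of Theorem~\ref{bsaf} uses only this $\liminf$ condition together with the denseness of polynomials, and not property~(C), so I would replay that argument: the shift $S(z^j)=z^{j+1}/w_{j+1}$ satisfies $B_wSf=f$, and the $\liminf$ condition together with Lemma~4.2 of \cite{Erdmann-Peris} produces a common increasing sequence $\{m_k\}$ along which $S^{m_k}(z^j)\to 0$ for every monomial, giving hypercyclicity via Gethner--Shapiro.

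For the necessity in part (ii), I would run the forward direction of Theorem~\ref{bsaf} but insert the coefficient estimate specific to this weight. If $f=\sum_m\lambda_m z^m$ is a hypercyclic vector, density of the orbit and continuity of some nonzero $k_j$ force $\sup_n|w_{j+1}\cdots w_{j+n}|\,|\lambda_{j+n}|=\infty$; substituting $|\lambda_m|\le C(m+1)^{3/p}\|f\|$ from Proposition~\ref{non-radial}(ii) and absorbing the fixed constant $|w_1\cdots w_j|$ then yields $\sup_m|w_1\cdots w_m|(m+1)^{3/p}=\infty$. The main (and essentially unavoidable) obstacle is the gap between the two exponents: property~(C) of Theorem~\ref{bsaf} fails for this weight because $\|k_n\|\,\|z^n\|_{A^p_\phi}\asymp(n+1)^{2/p}$ is unbounded, so the clean equivalence of Theorem~\ref{bsaf} is unavailable and one can only sandwich hypercyclicity between the one-sided conditions at $1/p$ and $3/p$.
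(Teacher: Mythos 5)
Your proof is correct and follows essentially the same route as the paper: the explicit computation $\|z^n\|_{A^p_\phi}^p=\frac{2}{np+2}-\frac{4}{\pi(np+3)}\asymp (n+1)^{-1}$, boundedness via Proposition~\ref{bounded1}, mixing and sufficiency via the Gethner--Shapiro criterion on polynomials, and necessity via the orbit/coefficient-functional argument combined with the bound of Proposition~\ref{non-radial}(ii). One caveat: your closing assertion that property (C) \emph{fails} because $\|k_n\|\,\|z^n\|_{A^p_\phi}\asymp (n+1)^{2/p}$ is unjustified---Proposition~\ref{non-radial}(ii) gives only the upper bound $\|k_n\|\lesssim (n+1)^{3/p}$, with no matching lower bound, so the honest statement is that property (C) is not known to hold for this weight, which is why only the one-sided conditions at exponents $1/p$ and $3/p$ are obtained.
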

\begin{proof}
{\text{(i)}} By Theorem \ref{bounded1}, $B$ is bounded on $A^{p}_{\phi}$. Further, applying the Gethner-Shairo criterion, we see that $B$ is mixing as $\lim_{n\rightarrow \infty}\|z^n\|=0$. Indeed,
\[
          \int_{\mathbb{D}} |z^{n}|^{p}\phi(z)dA(z)=\int_{\theta=0}^{2\pi}\int_{r=0}^{1} r^{pn+1}(1-r|\cos\theta|) \frac{drd\theta}{\pi}=\frac{2}{pn+2}-\frac{4}{\pi(pn+3)}.
\]
        
(ii) Applying the Gethner-Shapiro criterion to the dense subspace $X_0$ of polynomials, and the obvious right inverse of $B_w$ on $X_0$, we can see that $B_{w}$ is hypercyclic on $A^{p}_{\phi}$ if\\
$\inf_{n}\left({\lvert w_{1}\cdots w_{n} \rvert^{-p}}\int_{\mathbb{D}} |z^{n}|^{p}\phi(z)dA(z)\right)=0.
$
The later condition follows from the sufficient part in (ii). Conversely, assume that $B_w$ is hypercyclic. If $f(z)$ is a hypercyclic vector for $B_w$, then \\ $\sup_{n\geq 0}~\left\lvert {w_{1}w_{2}\cdots w_{n}} \frac{f^{(n)}(0)}{n!}\right\rvert=\infty.$ Now, using the coefficient estimate from Proposition \ref{non-radial}, we get 
 $ \sup_{n\geq 1}~~\Big({\lvert w_{1}\cdots w_{n} \rvert(n+1)^{\frac{3}{p}}} \Big)=\infty.
 $ This gives the converse part in \text{(ii)}.
\end{proof}
Similarly, the hypercyclicity of $B_{w}$ corresponding to the weight of the form $|g(z)|^{\gamma}$ can be established.
\begin{proposition}
    For $1\leq p<\infty,$ let $A^{p}_{\phi}$ be a weighted Bergman space corresponding to the non-radial weight $\phi(z)=|g(z)|^{\gamma},$ where $0<\gamma<1$, and $g$ is as in Corollary \ref{Herg}. Assume that the weighted backward shift $B_{w}$ is bounded on $A^{p}_{\phi}$. Then the following hold.
    \begin{itemize}
        \item[(i)] $B_{w}$ is hypercyclic on $A^{p}_{\phi}$ if 
$\displaystyle \sup_{n\geq 1}\left(| w_{1}\cdots w_{n}|\big( \int_{0}^{1}\int_{0}^{2\pi}r^{pn+1} |g(re^{i\theta})|^{\gamma} drd\theta\big)^{-\frac{1}{p}}\right)=\infty$.
        \item[(ii)]   
Conversely, if $B_{w}$ is hypercyclic on $A^{p}_{\phi}$, then 
\[
\displaystyle \sup_{n\geq 1}\left( \lvert w_{1}\cdots w_{n} \rvert \frac{1}{r^{n}(1-r)^{\frac{2}{p}}}\int_{0}^{2\pi}\frac{d\theta}{|g(re^{i\theta})|^{\frac{\gamma}{p}}} \right)=\infty,\quad 0<r<1.
\]
    \end{itemize}
\end{proposition}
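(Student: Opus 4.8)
The plan is to handle the two halves separately: part (i) is a sufficiency statement that I would deduce from the Gethner--Shapiro criterion exactly as in the sufficiency half of Theorem \ref{bsaf}, while part (ii) is a necessity statement that I would read off from the pointwise coefficient bound in Proposition \ref{Noon-radial2}(ii). The bridge for (i) is the elementary identity
\[
\int_{0}^{1}\int_{0}^{2\pi} r^{pn+1}\lvert g(re^{i\theta})\rvert^{\gamma}\,dr\,d\theta=\pi\,\lVert z^{n}\rVert_{A^{p}_{\phi}}^{p},
\]
which follows by writing $dA(z)=\tfrac{1}{\pi}r\,dr\,d\theta$ in $\lVert z^{n}\rVert_{A^{p}_{\phi}}^{p}=\int_{\mathbb{D}}\lvert z\rvert^{pn}\lvert g(z)\rvert^{\gamma}\,dA(z)$. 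Hence the hypothesis in (i) is equivalent to
\[
\liminf_{n\to\infty}\frac{\lVert z^{n}\rVert_{A^{p}_{\phi}}}{\lvert w_{1}\cdots w_{n}\rvert}=0,
\]
which is precisely the hypercyclicity condition of Theorem \ref{bsaf}.

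For part (i), I would take $X_{0}$ to be the polynomials, dense in $A^{p}_{\phi}$ by Corollary \ref{Herg}, and define the right inverse $S(z^{j})=z^{j+1}/w_{j+1}$ on $X_{0}$ (legitimate since the weights are nonzero throughout this section). Then $B_{w}S=I$ on $X_{0}$ and $B_{w}^{n}z^{j}=0$ for $n>j$, so only the convergence $S^{m}z^{j}\to 0$ requires attention. Since
\[
\lVert S^{m}z^{j}\rVert_{A^{p}_{\phi}}=\frac{\lVert z^{j+m}\rVert_{A^{p}_{\phi}}}{\lvert w_{j+1}\cdots w_{j+m}\rvert}=\lvert w_{1}\cdots w_{j}\rvert\,\frac{\lVert z^{j+m}\rVert_{A^{p}_{\phi}}}{\lvert w_{1}\cdots w_{j+m}\rvert},
\]
the condition above, applied to the tail past $j$, yields for each fixed monomial $z^{j}$ an increasing sequence along which $S^{m}z^{j}\to0$. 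I would then invoke Lemma~4.2 of \cite{Erdmann-Peris} to pass to a single increasing sequence $\{m_{k}\}$ valid simultaneously for all $j$, and conclude by the Gethner--Shapiro criterion (Theorem \ref{thm-hypc}). This reproduces the sufficiency half of Theorem \ref{bsaf}, and it uses \emph{only} the density of polynomials, not property (C); this is the reason I cannot simply quote Theorem \ref{bsaf} outright, since the multiplier-type bound (C) is not available for the non-radial weight $\lvert g\rvert^{\gamma}$.

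For part (ii), I would fix a hypercyclic vector $f=\sum_{n\geq0}\hat f(n)z^{n}$ for $B_{w}$. As evaluation at $0$ is continuous (Proposition \ref{Noon-radial2}(i)) and the orbit $\{B_{w}^{n}f\}$ is dense, the set $\{(B_{w}^{n}f)(0):n\geq1\}$ is dense in $\mathbb{C}$, hence unbounded; computing the constant term of $B_{w}^{n}f$ gives
\[
\sup_{n\geq1}\Big\lvert w_{1}w_{2}\cdots w_{n}\,\frac{f^{(n)}(0)}{n!}\Big\rvert=\infty.
\]
Inserting the pointwise estimate of Proposition \ref{Noon-radial2}(ii), namely $\tfrac{\lvert f^{(n)}(0)\rvert}{n!}\leq \tfrac{C_{p}\lVert f\rVert}{r^{n}(1-r)^{2/p}}\int_{0}^{2\pi}\lvert g(re^{i\theta})\rvert^{-\gamma/p}\,d\theta$ for every $0<r<1$, the displayed supremum is bounded above by $C_{p}\lVert f\rVert$ times the quantity appearing in (ii); therefore that quantity must be infinite for each fixed $r\in(0,1)$, which is exactly the asserted necessary condition.

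The routine steps (the norm identity, the constant-term computation, the algebra for $S^{m}z^{j}$) are straightforward. The only genuinely delicate point is the passage in part (i) from ``a subsequence for each $j$'' to ``one subsequence for all $j$'': a naive diagonal argument fails because $\liminf_{n}\lVert z^{n}\rVert/\lvert w_{1}\cdots w_{n}\rvert=0$ controls single indices, whereas a common sequence demands smallness on whole blocks of consecutive indices. This is precisely what Lemma~4.2 of \cite{Erdmann-Peris} supplies, and it is ultimately powered by the boundedness of $B_{w}$: writing $c_{n}=\lVert z^{n}\rVert/\lvert w_{1}\cdots w_{n}\rvert$, one has $c_{n-1}/c_{n}=\lVert z^{n-1}\rVert\lvert w_{n}\rvert/\lVert z^{n}\rVert=\lVert B_{w}z^{n}\rVert/\lVert z^{n}\rVert\leq\lVert B_{w}\rVert$, so the smallness of $c_{n}$ propagates backward to neighbouring indices and, after reindexing, produces the required blocks.
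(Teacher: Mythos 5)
Your proposal is correct and takes essentially the same route as the paper, which proves this proposition by saying it is established ``similarly'' to the preceding one for $\phi(z)=1-|\Re(z)|$: the Gethner--Shapiro criterion on the dense set of polynomials (Corollary \ref{Herg}) with the right inverse $S(z^{j})=z^{j+1}/w_{j+1}$ for sufficiency, and, for necessity, the unboundedness of the constant coefficients $w_{1}\cdots w_{n}\hat f(n)$ of the orbit of a hypercyclic vector combined with the coefficient estimate of Proposition \ref{Noon-radial2}(ii). Your extra care about passing from a subsequence for each monomial to a single common subsequence (via Lemma 4.2 of \cite{Erdmann-Peris}, powered by the backward propagation $c_{n-1}\leq \lVert B_{w}\rVert c_{n}$) is exactly the argument the paper itself uses in the proof of Theorem \ref{bsaf}, so no discrepancy arises there.
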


It would be interesting to obtain a characterization of hypercyclic weighted shifts on $A^p_{\phi}$ for some classes of non-radial weights.
\vskip .3cm
\noindent\textbf{Acknowledgments:} B.K. Das is supported by a CSIR research fellowship \\(File No.: 09/1059(0037)/2020-EMR-I), and A. Mundayadan is partially funded by a Start-Up Research Grant of SERB-DST (File. No.: SRG/2021/002418).
	
	\noindent\textbf{Competing interests declaration:} The authors declare none. 
	
\bibliographystyle{amsplain}

\end{document}